\documentclass{math-note}

\usepackage[english]{babel}
\usepackage{mathtools,amssymb,amsthm,mathrsfs}
\usepackage{booktabs,array}
\usepackage[hyphens]{url}
\usepackage[hidelinks,
  pdftitle={A Two-Stage Construction of Positive Curvature on the Gromoll--Meyer Sphere},
  pdfauthor={Shengtao Guo, Ethan X. Fang, Junwei Lu}]{hyperref}
\font\gmtitlefont=cmr17 at 14.4pt 
\title{\gmtitlefont A Two-Stage Construction of Positive Curvature on the Gromoll--Meyer Sphere}
\date{}
\author{
  Shengtao Guo \qquad
  Ethan X. Fang \qquad
  Junwei Lu\thanks{Department of Biostatistics, Harvard T.H. Chan School of
  Public Health. Email: \texttt{junweilu@hsph.harvard.edu}.}
}
\numberwithin{equation}{section}
\newtheorem{theorem}{Theorem}[section]
\newtheorem{proposition}[theorem]{Proposition}
\newtheorem{lemma}[theorem]{Lemma}
\newtheorem{corollary}[theorem]{Corollary}
\theoremstyle{definition}

\theoremstyle{remark}

\newcommand{\R}{\mathbb R}
\newcommand{\HH}{\mathbb H}
\newcommand{\Sp}{\operatorname{Sp}}
\newcommand{\SO}{\operatorname{SO}}
\newcommand{\Ad}{\operatorname{Ad}}
\newcommand{\ad}{\operatorname{ad}}
\newcommand{\im}{\operatorname{Im}}
\newcommand{\re}{\operatorname{Re}}
\newcommand{\tr}{\operatorname{tr}}
\newcommand{\spanof}{\operatorname{span}}
\newcommand{\diag}{\operatorname{diag}}
\newcommand{\Gr}{\operatorname{Gr}}
\newcommand{\dist}{\operatorname{dist}}

\newcommand{\Afamily}{\mathscr A}
\newcommand{\Bfamily}{\mathscr B}
\newcommand{\ZA}{\mathcal Z_{\Afamily}}
\newcommand{\ZB}{\mathcal Z_{\Bfamily}}
\newcommand{\calQ}{\mathscr Q}
\newcommand{\ip}[2]{\langle #1,#2\rangle}
\newcommand{\norm}[1]{\lVert #1\rVert}

\begin{document}
\maketitle

\begin{abstract}
We construct an explicit one-parameter family of smooth metrics on the
Gromoll--Meyer exotic seven-sphere,
converging in \(C^\infty\) to a fixed further Cheeger deformation of the
Eschenburg--Kerin metric and having strictly positive sectional curvature
for all sufficiently small positive parameter values. The first perturbation
preserves the totally geodesic flats of one zero-plane family while making
curvature positive near the other; the second removes the remaining zero
curvature.
The metric and the proof were discovered by the Odin Automatic AI \mbox{Research Agent}.
\end{abstract}

\section{Introduction}

Gromoll and Meyer realized an exotic seven-sphere \(\Sigma\) as a free
biquotient of \(\Sp(2)\) and equipped it with nonnegative sectional
curvature~\cite[Theorems 1 and 2]{GM74}.

Wilhelm constructed a metric on this sphere with positive sectional
curvature at every plane over an open dense set of
points~\cite[Theorem A]{W01}. Eschenburg and Kerin subsequently obtained
almost positive curvature from two Cheeger deformations and described the
exceptional planes explicitly~\cite[Sections 2--4]{EK08}. Nonnegative
curvature is available on all Milnor spheres by Grove and
Ziller~\cite[Theorems A and B]{GZ00}, and on every exotic seven-sphere by
Goette, Kerin and Shankar~\cite[Theorem A]{GKS20}.

Recent constructions on products of spheres combine Cheeger deformations
with further metric perturbations. Brendle and
Hung~\cite[Introduction]{BH26} obtain positive sectional curvature on
\(S^2\times S^2\) through a third-order perturbation of a Cheeger--M\"uter
metric. Guo, Fang and Lu~\cite[Introduction]{GFL26} obtain positive sectional
curvature on \(S^2\times S^3\) by perturbing a connection metric on a circle
bundle over \(S^2\times S^2\); the base carries a Cheeger metric, and
fibrewise rotation of the perturbation produces a positive second-order
curvature term.

Berman, Cederwall and
Schettini Gherardini computed curvature tensors for explicit
Kaluza--Klein metrics on \(\Sigma\)~\cite[Sections 3 and 5]{BCG25}.
Their positivity estimates concern Ricci curvature.

Petersen and Wilhelm proposed a positive-curvature construction on the
Gromoll--Meyer sphere~\cite[Introduction]{PW08}. Their subsequent study of
deformations preserving totally geodesic flats~\cite[Section~2]{PW10}
provides background for our first perturbation.

Ouyang~\cite[Sections~4--9]{Ouy26} recently presented a construction of
positive sectional curvature on \(\Sigma\) using a fixed operator to define
an inverse-linear deformation of the bi-invariant metric on \(\Sp(2)\).
Its positivity argument uses a cubic coefficient on the remaining critical
set. Our construction and proof were obtained independently of that work.

We start from a fixed further Cheeger deformation \(g_{0,0}\) of the
Eschenburg--Kerin metric and use two successive position-dependent tensor
perturbations. The reference metric and the perturbations are defined in
Section~\ref{sec:metric}.

\begin{samepage}
\begin{theorem}\label{thm:main}
The reference metric \(g_{0,0}\) is a \(C^\infty\) limit of metrics with
strictly positive sectional curvature on the Gromoll--Meyer sphere.
More precisely, the explicit family \(g_\varepsilon\) defined in
\eqref{eq:final-metric} converges to \(g_{0,0}\) in \(C^\infty\) as
\(\varepsilon\to0^+\), and there are constants \(c,\varepsilon_0>0\) such that
\[
 \min_{\sigma\in\Gr_2(T\Sigma)}\sec_{g_\varepsilon}(\sigma)
 \ge c\varepsilon^6
 \qquad(0<\varepsilon<\varepsilon_0).
\]
\end{theorem}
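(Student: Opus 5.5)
The plan is a perturbative analysis of the sectional curvature on the compact Grassmann bundle \(\Gr_2(T\Sigma)\), organized around the zero set of the reference metric. First I would record the structure of \(g_{0,0}\). It is nonnegatively curved, as a Cheeger deformation of the Eschenburg--Kerin metric. Following \cite[Sections 2--4]{EK08}, its zero-curvature planes split into two families \(\Afamily\) and \(\Bfamily\), with zero loci \(\ZA,\ZB\subset\Gr_2(T\Sigma)\). The planes of \(\Afamily\) are tangent to totally geodesic flats; \(\Bfamily\) is the remaining family. The \(C^\infty\) convergence \(g_\varepsilon\to g_{0,0}\) is immediate from \eqref{eq:final-metric}, since the perturbation tensors are smooth and fixed with polynomial coefficients in \(\varepsilon\). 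So the whole content is the lower bound \(c\varepsilon^6\).

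For the bound, I would write \(g_\varepsilon=g_{0,0}+\varepsilon^{a}h_1+\varepsilon^{b}h_2\), and use the curvature of unnormalized planes \(\kappa_\varepsilon(X,Y)=R_{g_\varepsilon}(X,Y,Y,X)\). Expanding, \(\kappa_\varepsilon=\kappa_0+\varepsilon^a\kappa_1+\varepsilon^b\kappa_2+\dots\), with all remainders uniform on the compact unit-frame bundle. Away from a fixed neighbourhood \(U\) of \(\ZA\cup\ZB\), \(\kappa_0\ge c_U>0\) and the claim is trivial. Near the zero set I would use a Morse--Bott type expansion. Near a zero plane \(\sigma_0=(X_0,Y_0)\), write \(X=X_0+x\) and \(Y=Y_0+y\). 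Then \(\kappa_0\) vanishes to second order and is bounded below by a positive quadratic form in the normal directions to the zero locus. This is the standard fact for nonnegative curvature that \(R(X_0,\cdot,\cdot,X_0)\) terms and the first variation vanish. The perturbation terms contribute a first-order term in \(\varepsilon\) and mixed terms linear in \((x,y)\). Positivity then reduces to a quadratic-form-plus-linear minimization, \(\min_{v}\,Q_0(v)+\varepsilon^a L(v)+\varepsilon^a q_1\), which gives \(\varepsilon^a q_1-\tfrac14\varepsilon^{2a}\|L\|^2_{Q_0^{-1}}\). This must be kept positive, which is arranged by choosing the exponents so the linear (mixed) terms are dominated.

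The two stages are then handled separately. Stage one is \(h_1\), built after \cite[Section 2]{PW10} to preserve the totally geodesic flats of \(\Afamily\). On \(\ZA\) it leaves the curvature exactly zero, so \(\kappa\) stays \(\ge0\) near \(\ZA\) up to the higher-order correction. Near \(\ZB\), however, it produces a strictly positive first-order term, of order \(\varepsilon^{2}\) or so after the quadratic-minimization loss. Stage two is \(h_2\), at a higher power of \(\varepsilon\). It is designed to give a positive leading term on \(\ZA\), where the first stage left zero curvature. Near \(\ZB\) its contribution is too small to destroy what stage one produced. Balancing the orders gives the final \(\varepsilon^6\).

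The main obstacle is the uniformity near the intersection or closure interfaces. The trouble spots are planes close to \(\ZA\) but not on it, and planes near points where the two families meet. There the quadratic lower bound of \(\kappa_0\) may degenerate, and the Schur-complement estimate must be done with weights. I would try first to use the explicit \(\Sp(2)\) description to parametrize a neighbourhood of \(\ZA\cup\ZB\) by invariant coordinates, and verify a lower bound of the form \(\kappa_\varepsilon\ge c(|v|^2\varepsilon^{2}+\varepsilon^{6})\) by an explicit anisotropic scaling. Here \(v\) is the distance to the zero set. This bound would be checked separately in each scaling regime \(|v|\gg\varepsilon^k\), \(|v|\sim\varepsilon^k\) and \(|v|\ll\varepsilon^k\), together with the cubic error bounds.
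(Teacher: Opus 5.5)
\textbf{The local model relies on a first-order gain that cannot exist.} Your reduction at a zero plane, \(\min_v Q_0(v)+\varepsilon^aL(v)+\varepsilon^aq_1\), assumes the perturbation has a positive first-order curvature term \(q_1\) on the zero set: on \(\ZB\) in stage one and on \(\ZA\) in stage two. Here that is impossible.

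\begin{itemize}
\item Every plane of \(\ZA\) \emph{and} of \(\ZB\) lies in an immersed totally geodesic flat of \(g_{0,0}\) (Proposition~\ref{prop:clean-zero}), and all tangent planes of that flat are zero planes of the same family.
\item The Gauss correction is quadratic in the second fundamental form. So the first variation there is the linearized Gauss curvature of the induced flat metric, \(\Gamma^{-1}(\partial_\xi\partial_\eta k_{12}-\tfrac12\partial_\xi^2k_{22}-\tfrac12\partial_\eta^2k_{11})\) as in \eqref{eq:J-intrinsic}.
\item This is a divergence of bounded quantities, so its averages over large coordinate squares tend to zero. Hence \(q_1\) cannot have a positive lower bound on the family, and wherever \(q_1<0\) the curvature turns negative.
\end{itemize}

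This is why the paper arranges \(L_1=0\) exactly. On \(\ZB\) the factor \(m-\tfrac12\) kills \(H_N\) and its flat derivatives, and on \(\ZA\) the first variation of \(H\) vanishes (Lemma~\ref{lem:A-first-null}). With \(q_1=0\), your bound \(\varepsilon^aq_1-\tfrac14\varepsilon^{2a}\norm{L}^2_{Q_0^{-1}}\) is nonpositive. What you dropped is the second-order coefficient \(a_z\). The decisive quantity is the reduced second variation \(\calQ(z)=a_z-b_z^T\mathcal H_z^{-1}b_z\), minimized over first-order motions of both the base point and the plane.

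Even \(J\) cannot simply add a positive leading term on \(\ZA\). Its contribution \(L_J\) is again a flat divergence, so it can only redistribute the second-order term along the flat. The real content of the theorem is proving \(\calQ_{\Bfamily}(H_N)>1/48\) and \(\calQ_{\Afamily}(H,J)>1/8\) for these specific tensors. The paper does this through the source formula \eqref{eq:full-source-quadratic}, feasible dual pairs, and the exact Bernstein certificate. Your proposal asserts that the two stages produce positivity but gives neither a workable mechanism nor any computation. For an explicit family, that is the missing core.

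\textbf{Secondary issues.}
\begin{itemize}
\item Nonnegative curvature only makes the normal Hessian positive semidefinite. Definiteness (normal ranks eleven and twelve, Lemma~\ref{lem:EK-normal-gap}) must be proved.
\item Once definiteness is known, the interfaces you flag as the main obstacle do not occur. \(\ZA\) and \(\ZB\) are disjoint compact embedded submanifolds, and the implicit-function argument of Lemma~\ref{lem:normal-minimum} on a fixed tube replaces your multi-regime scaling.
\item Near \(\ZA\), ``nonnegative up to the higher-order correction'' is too weak. A loss of order \(\varepsilon^2\) there would swamp the later \(t^2=\varepsilon^6\) gain. You need \(\varepsilon H_N\) to keep \(\ZA\) as exact critical zeros, with uniformly positive normal Hessian and vanishing first \(t\)-variation, for every small \(\varepsilon\) (Proposition~\ref{prop:A-preservation}).
\item With that in place, the uniform version of Lemma~\ref{lem:normal-minimum} gives \(F_{\varepsilon,t}\ge c\,t^2\) near \(\ZA\). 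Elsewhere \(F_{\varepsilon,0}\ge c_1\varepsilon^2\) and \(|F_{\varepsilon,t}-F_{\varepsilon,0}|\le Ct\), so \(t=\varepsilon^3\) yields \(c\,\varepsilon^6\).
\end{itemize}
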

\end{samepage}

The zero set of \(g_{0,0}\) has two disjoint compact submanifolds \(\ZA\) and
\(\ZB\) of zero planes in the Grassmann bundle. We call them the \(\Afamily\) and \(\Bfamily\)
families, and call the corresponding totally geodesic flats the \(\Afamily\) and \(\Bfamily\)
flats. The curvature Hessian
is positive in the normal directions to each. Although their projections
to \(\Sigma\) have singularities, the zero-plane families admit smooth
tubular neighborhoods in which to estimate the perturbed curvature.

On the product source, the first perturbation is \(\varepsilon D^*SD\), where
\(S\) is symmetric and \(D\) vanishes on the tangent spaces of the \(\Afamily\)
flats. It preserves their metric and total geodesy and makes curvature
positive near \(\ZB\). We then perturb by \(tH+t^2J\), with \(H,J\) as in
\eqref{eq:H}--\eqref{eq:J}. The first curvature variation of \(H\) vanishes on \(\ZA\),
while \(J\) makes the reduced second variation positive. The first perturbation
gives an \(\varepsilon^2\) lower bound near \(\ZB\), whereas the second changes
curvature there by \(O(t)\). Taking \(t=\varepsilon^3\) preserves that
bound and gives an \(\varepsilon^6\) lower bound near \(\ZA\).
The same estimates give a region of positive curvature for the two-parameter
family.

\begin{corollary}\label{cor:parameter-region}
There are constants \(c,\eta,\varepsilon_0>0\) such that the metrics
\(g_{\varepsilon,t}\) defined in \eqref{eq:two-parameter-metric} satisfy
\[
 \min_{\sigma\in\Gr_2(T\Sigma)}\sec_{g_{\varepsilon,t}}(\sigma)
 \ge c\min\{t^2,\varepsilon^2\}>0
\]
whenever \(0<\varepsilon<\varepsilon_0\) and
\(0<t\le\eta\varepsilon^2\).
\end{corollary}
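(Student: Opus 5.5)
The corollary should follow by rerunning the proof of Theorem~\ref{thm:main} with \(t\) kept independent of \(\varepsilon\), and using the hypothesis \(t\le\eta\varepsilon^2\) only where the theorem used \(t=\varepsilon^3\). I would cover the compact Grassmann bundle \(\Gr_2(T\Sigma)\) by three regions. The first is a tubular neighborhood \(U_{\Afamily}\) of \(\ZA\). The second is a tubular neighborhood \(U_{\Bfamily}\) of \(\ZB\). The third is the complement \(K\), on which \(\sec_{g_{0,0}}\ge c_K>0\) by compactness. On \(K\), the metrics \(g_{\varepsilon,t}\) converge to \(g_{0,0}\) in \(C^2\) uniformly as \((\varepsilon,t)\to0\), so \(\sec_{g_{\varepsilon,t}}\ge c_K/2\) once \(\varepsilon_0\) is small. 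This is far larger than \(c\min\{t^2,\varepsilon^2\}\).

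Near \(\ZB\), the first perturbation alone should give the estimate established in the proof of the theorem: \(\sec_{g_{\varepsilon,0}}\ge c_1\varepsilon^2\) on \(U_{\Bfamily}\). The second perturbation \(tH+t^2J\) is a smooth tensor family whose coefficients are bounded uniformly in \(\varepsilon\), so it changes curvature by at most \(C_2t\). If \(t\le\eta\varepsilon^2\) with \(\eta\le c_1/(2C_2)\), then \(\sec_{g_{\varepsilon,t}}\ge \tfrac{c_1}{2}\varepsilon^2\) on \(U_{\Bfamily}\).

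Near \(\ZA\), the first perturbation leaves the \(\Afamily\) flats flat and totally geodesic, since \(D\) vanishes on their tangent spaces. The Hessian of \(\sec\) in the normal directions therefore stays uniformly positive for small \(\varepsilon\). I would expand \(\sec_{g_{\varepsilon,t}}\) in \(t\) at a plane \(\sigma\) with normal coordinate \(x\) from \(\ZA\):
\[
\sec_{g_{\varepsilon,t}}(\sigma)=Q_\varepsilon(x)+t\,L_\varepsilon(x)+t^2M_\varepsilon(x)+O(t^3+t^2|x|+t|x|^2+|x|^3).
\]
Here \(Q_\varepsilon\) is the positive-definite normal Hessian, \(L_\varepsilon\) is linear in \(x\) because the first variation of \(H\) vanishes on \(\ZA\), and \(M_\varepsilon\) is the second variation on \(\ZA\). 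Completing the square gives the reduced quantity \(M_\varepsilon-\tfrac14 L_\varepsilon^{T}Q_\varepsilon^{-1}L_\varepsilon\). In the theorem's proof this reduced quantity is made positive by \(J\), with a bound \(\ge c_3\) uniform in small \(\varepsilon\). That yields \(\sec\ge c(|x|^2+t^2)\ge ct^2\) on \(U_{\Afamily}\), after shrinking \(U_{\Afamily}\) and \(t\) to absorb the cubic remainder.

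The main obstacle is the uniformity in \(\varepsilon\) of the reduced second-variation bound and of the remainder constants. In the theorem, \(t=\varepsilon^3\) ties the two parameters together, so it is not obvious that the proof gives a bound independent of \(\varepsilon\). I would first check that the constants in the \(\ZA\) analysis come from the smooth family \(\varepsilon\mapsto g_{\varepsilon,0}\) restricted to the fixed flats. There the metric equals \(g_{0,0}\), so \(M_\varepsilon\), \(L_\varepsilon\) and \(Q_\varepsilon\) depend continuously on \(\varepsilon\in[0,\varepsilon_0]\), and positivity at \(\varepsilon=0\) propagates by compactness. If that works, \(t\le\eta\varepsilon^2\) is needed only near \(\ZB\). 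Combining the three regions gives the bound \(c\min\{t^2,\varepsilon^2\}\).
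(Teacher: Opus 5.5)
Your three-region argument is the paper's proof. The paper also gets \(F_{\varepsilon,0}\ge c_1\varepsilon^2\) away from a small \(\Afamily\) tube, from the \(\Bfamily\) estimate plus compactness. It then absorbs the \(tH+t^2J\) change \(|F_{\varepsilon,t}-F_{\varepsilon,0}|\le Ct\) using \(t\le\eta\varepsilon^2\), and gets \(F_{\varepsilon,t}\ge c_{\Afamily}t^2\) on a fixed \(\Afamily\) tube from a version of Lemma~\ref{lem:normal-minimum} that is uniform in \(\varepsilon\). Your Schur-complement completion of the square is equivalent to that lemma.

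There is one step you assert without justification: the exact vanishing of the first \(t\)-variation of curvature on \(\ZA\), i.e.\ the constant term of your \(L_\varepsilon\), for every fixed \(\varepsilon>0\). Lemma~\ref{lem:A-first-null} proves this only at \(\varepsilon=0\). Continuity in \(\varepsilon\), which is what your last paragraph offers, only yields a term \(t\,\ell_\varepsilon(z)\) with \(\ell_\varepsilon=O(\varepsilon)\). If \(\ell_\varepsilon(z)<0\), then \(F_{\varepsilon,t}(z)\approx t\,\ell_\varepsilon(z)+t^2M<0\) for \(t\ll\varepsilon\), and the corollary explicitly allows that regime. So exact vanishing is essential.

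The paper closes this step as follows:
\begin{itemize}
\item Since \(\varepsilon H_N\) has the form \(D^*SD\) with \(D\) annihilating the flats, the \(\Afamily\) flats, their induced metric, and their horizontal source lifts are the same for \(g_{\varepsilon,0}\) as for \(g_{0,0}\) (Proposition~\ref{prop:A-preservation}).
\item The \(t\)-derivative of the source metric is \(q_0^*H\), which is independent of \(\varepsilon\). The first variation of the quotient metric on flat tangent vectors is this tensor evaluated on those unchanged lifts, so its restriction to each flat does not depend on \(\varepsilon\).
\item At a totally geodesic flat the Gauss term has zero first variation, because the second fundamental form vanishes. The first variation of sectional curvature is therefore the intrinsic one built from the restricted tensor, so it equals its \(\varepsilon=0\) value, which is zero.
\end{itemize}
Your remark that the metric on the flats equals that of \(g_{0,0}\) is the beginning of this argument, but you use it only for continuity. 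Once this step is inserted, the rest of your proposal goes through: continuity of the normal Hessian and of the reduced second variation, followed by uniform control of the Taylor remainders.
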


The reduced second variation is the minimum over first-order displacements
of the point and the plane. Completing the curvature squares on the product
source gives a lower bound from an explicit feasible solution of linear dual
equations. On \(\ZA\),
we choose the dual variables using a polynomial of a four-by-four matrix.
The remaining estimate reduces to five inequalities in two angular
parameters. The final sign checks use exact rational arithmetic in the
Bernstein basis.

\noindent\textbf{The role of AI in this proof.}
Odin Automatic AI Research Agent was used to construct the metric and the~proof.

\noindent\textbf{Paper organization.}
Section~\ref{sec:metric} defines the metric family.
Section~\ref{sec:zero-families} describes the zero-curvature planes, and
Section~\ref{sec:completion} reduces the main theorem to the \(\Bfamily\) and \(\Afamily\)
variation estimates. Section~\ref{sec:source-variation} derives the
source curvature-variation formula used to prove these estimates in
Sections~\ref{sec:B-bound} and~\ref{sec:A-source}.
Section~\ref{sec:verification} gives the exact computations.
Appendices~\ref{app:clean}--\ref{app:mass-encoding} supply the bracket
estimates, explicit dual vectors, and intrinsic variation calculations.

\section{The metric and the two perturbations}\label{sec:metric}

Let
\(\Sigma=\Sp(2)//\Sp(1)\), with free action
\begin{equation}\label{eq:GM-action}
 q\cdot g=\diag(q,1)\,g\,\diag(q^{-1},q^{-1}).
\end{equation}
This is the inverse-matrix convention of the Gromoll--Meyer
construction~\cite[Section 1]{GM74}. Put
\(Q(X,Y)=-\re\tr(XY)\) on \(\mathfrak{sp}(2)\). Our first coordinate
frame is \(Q\)-orthonormal:
\begin{equation}\label{eq:orth-frame}
 X=\begin{pmatrix}
 d_1&(\rho+\omega)/\sqrt2\\
 -(\rho-\omega)/\sqrt2&d_2
 \end{pmatrix},
 \qquad d_1,d_2,\omega\in\im\HH,\quad \rho\in\R.
\end{equation}
All matrices in this section use the block order
\((d_1,d_2,\rho,\omega)\), with dimensions \((3,3,1,3)\).

The initial left-invariant metric on \(\Sp(2)\) has operator
\begin{equation}\label{eq:P0}
 P_0=\begin{pmatrix}
 3I/8&-I/8&0&0\\
 -I/8&3I/8&0&0\\
 0&0&1&0\\
 0&0&0&I
 \end{pmatrix}.
\end{equation}
It is obtained from \(Q\) by contraction along
\(K=\Sp(1)\times\Sp(1)\) and then along
\(H_\Delta=\Delta\Sp(1)\), as in~\cite[Section 2]{EK08}.
Here contraction means Cheeger's product-submersion
construction~\cite{C73}: an isometric action on \((M,g)\) and a
bi-invariant group metric \(Q_K\) give a new metric on \(M\) by the
quotient of \((M\times K,g+T^{-1}Q_K)\), with \(T>0\).
The projection \((p,k)\mapsto k^{-1}\cdot p\) contracts the metric
in the orbit directions and fixes its orthogonal complement.
The eigenvalues of \(P_0\) on the off-diagonal, diagonal
anti-diagonal and diagonal-diagonal subspaces are respectively
\(1,1/2,1/4\). Thus the two contraction factors in the
Eschenburg--Kerin convention are both \(1/2\).
We denote its quotient under \eqref{eq:GM-action} by \(g_{\rm EK}\).

Write the first row of \(g\in\Sp(2)\) as \((a,b)\), and put
\[
 m=|a|^2,\quad a_0=\re a,\quad b_0=\re b,
 \quad \mathbf a=\im a,\quad\mathbf b=\im b.
\]
For an imaginary quaternion \(v\), \([v]_\times\) denotes the map
\(w\mapsto v\times w\) on \(\im\HH\cong\R^3\). Let \(O\) be the
\(Q\)-orthogonal projection onto the off-diagonal subspace, and define
\begin{equation}\label{eq:annihilator}
 \zeta=\begin{pmatrix}0&\bar a b\\-\bar b a&0\end{pmatrix},
 \qquad
 N=2m(1-m)O-\zeta\zeta^T+
                   \ad_\zeta O\ad_\zeta^T.
\end{equation}
In \(\zeta\zeta^T\), the matrix \(\zeta\) is identified with its
coordinate vector in \eqref{eq:orth-frame}. The map \(NP_0\) will
annihilate the tangent spaces of the \(\Afamily\) flats. Placing it on both sides
of a symmetric operator preserves their induced metric and total geodesy.
The factor \(m-1/2\) below makes the perturbation vanish on \(\Bfamily\), while
its transverse derivatives can change the second curvature variation.
Set
\begin{equation}\label{eq:HN}
 M=\begin{pmatrix}
 -I&2I&0&0\\2I&-I&0&0\\0&0&0&0\\0&0&0&12I
 \end{pmatrix},
 \qquad
 H_N=(m-\tfrac12)P_0NMNP_0.
\end{equation}
The next perturbation pairs a tensor \(H\) whose first curvature variation
vanishes on \(\Afamily\) with a tensor \(J\) that makes the reduced second variation
positive. The following coefficients give one explicit choice; their
curvature properties are proved in Sections~\ref{sec:B-bound}
and~\ref{sec:A-source}. The nonzero blocks are
\begin{align}
 H_{d_2d_2}&=5\sqrt2\,a_0I,&
 H_{d_1\omega}&=-4b_0I-\tfrac{13}{15}[\mathbf b]_\times,
 \label{eq:H}\\
 H_{d_2\omega}&=\tfrac{107}{20}b_0I-4[\mathbf b]_\times,&
 H_{d_2\rho}&=-4\mathbf b,\nonumber\\[2pt]
 J_{d_1d_1}&=(96m+3m^2)I,&
 J_{d_2\omega}&=11b_0[\mathbf a]_\times,
 \label{eq:J}\\
 J_{\omega\omega}&=-28a_0^2I+16\mathbf a\mathbf a^T
                                      -49\mathbf b\mathbf b^T.&&\nonumber
\end{align}
The transpose of every displayed off-diagonal block is included.

Let \(K_g:\im\HH\to\mathfrak{sp}(2)\) be
\(K_g(v)=\Ad_{g^{-1}}\diag(0,v)\). Define
\begin{equation}\label{eq:two-parameter-metric}
 \widehat P_{\varepsilon,t}(g)
 =\left[(P_0+\varepsilon H_N+tH+t^2J)^{-1}
                          +10K_gK_g^T\right]^{-1}.
\end{equation}
The inner product upstairs is
\(Q(\widehat P_{\varepsilon,t}(g)X,Y)\) on vectors \(gX,gY\).
Its Riemannian quotient under \eqref{eq:GM-action} is denoted by
\(g_{\varepsilon,t}\). In particular, \(g_{0,0}\) is obtained from
\(g_{\rm EK}\) by the left-lower Cheeger deformation with parameter
10. We use \(g_{\rm EK}\) before that deformation and \(g_{0,0}\)
after it. The family in Theorem~\ref{thm:main} is
\begin{equation}\label{eq:final-metric}
 g_\varepsilon=g_{\varepsilon,\varepsilon^3},\qquad
 \widehat P_\varepsilon
 =\left[(P_0+\varepsilon H_N+\varepsilon^3H+\varepsilon^6J)^{-1}
                          +10K_gK_g^T\right]^{-1}.
\end{equation}
Put \(E_{KH}=\Sp(2)\times K\times H_\Delta\). The first two quotient
maps compose to
\[
 q_{KH}:E_{KH}\longrightarrow\Sp(2),\qquad
 q_{KH}(g,k,h_0)=gk^{-1}h_0^{-1}.
\]
On the common product source \(E=E_{KH}\times L\), with \(L=\Sp(1)\),
define \(q_0=q_{KH}\circ\operatorname{pr}_{E_{KH}}\).
Thus \(q_0(g,k,h_0,l)=gk^{-1}h_0^{-1}\), and the full projection is
\[
 \pi_E(g,k,h_0,l)=[\diag(1,l)gk^{-1}h_0^{-1}]\in\Sigma.
\]
Thus the last factor implements the left-lower Cheeger deformation.
The unperturbed factor metrics are
bi-invariant, with their scales specified in \eqref{eq:product-metric}.
\begin{lemma}\label{lem:smooth-metric}
For all sufficiently small \(\varepsilon,t\),
\eqref{eq:two-parameter-metric} defines a smooth positive-definite metric
operator. The action \eqref{eq:GM-action} and left multiplication by
\(\diag(1,q)\) are isometric. The quotient metric
\(g_{\varepsilon,t}\) depends smoothly on the parameters and the point of
the fixed exotic smooth manifold \(\Sigma\).
\end{lemma}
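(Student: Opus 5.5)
The plan is to check three things in turn: positivity and smoothness of the operator \(\widehat P_{\varepsilon,t}\) on \(\Sp(2)\), equivariance under the two group actions, and descent to the quotient.

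\emph{Positivity and smoothness.} The coefficients \(m,a_0,b_0,\mathbf a,\mathbf b\) are polynomial in the entries of \(g\), and \(\zeta\) is quadratic in them. Hence \(N\), \(H_N\), \(H\), \(J\) and \(K_gK_g^T\) are smooth (indeed polynomial) symmetric operator-valued functions on the compact group \(\Sp(2)\). They are symmetric because \(N\) is symmetric: \(O\) is an orthogonal projection, so \(\ad_\zeta O\ad_\zeta^T\) and \(\zeta\zeta^T\) are symmetric. Since \(P_0\ge \tfrac14 I\) and the perturbation \(\varepsilon H_N+tH+t^2J\) is bounded in operator norm by \(C(|\varepsilon|+|t|)\) uniformly on \(\Sp(2)\), the sum \(P_0+\varepsilon H_N+tH+t^2J\) is positive definite for \(|\varepsilon|,|t|<1/(8C)\). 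Its inverse is then smooth and positive definite. Adding the positive semidefinite term \(10K_gK_g^T\) keeps it positive definite, and inverting again gives a smooth positive-definite \(\widehat P_{\varepsilon,t}\), jointly smooth in \((g,\varepsilon,t)\) because inversion is analytic on positive matrices.

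\emph{Equivariance.} The left-invariant pullback structure means the metric at \(g\) is \(Q(\widehat P(g)X,Y)\). Under the Gromoll--Meyer action, \(g'=\diag(q,1)g\diag(\bar q,\bar q)\) and \(d(L\circ R)\) sends \(gX\) to \(g'\Ad_{\diag(q,q)}X\). So I must show
\[
\widehat P(g')=\Ad_{\diag(q,q)}\widehat P(g)\Ad_{\diag(q,q)}^{-1}.
\]
I check this block by block. \(P_0\) commutes with \(\Ad_{\diag(q,q)}\), which acts by conjugation \(v\mapsto qv\bar q\) on \(d_1,d_2,\omega\) and trivially on \(\rho\). The first row transforms as \((a,b)\mapsto(qa\bar q,\,qb\bar q)\). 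Hence \(m,a_0,b_0\) are invariant, while \(\mathbf a,\mathbf b\) rotate by \(\Ad_q\), so the blocks \(I\), \([\mathbf b]_\times\), \(\mathbf b\), \(\mathbf a\mathbf a^T\) and so on are equivariant; rotations preserve the cross product. Likewise \(\bar ab\mapsto q\bar a b\bar q\), so \(\zeta\mapsto\Ad_{\diag(q,q)}\zeta\), and \(N\) is equivariant because \(O\) and \(\ad\) are \(\Ad\)-natural. For \(K_g\), \(K_{g'}(v)=\Ad_{\diag(\bar q,\bar q)g^{-1}}\diag(0,v)\), where the left factor \(\diag(q,1)\) commutes with \(\diag(0,v)\); thus \(K_{g'}K_{g'}^T\) is the conjugate of \(K_gK_g^T\).

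Left multiplication by \(\diag(1,p)\) leaves \((a,b)\) unchanged, so \(H_N,H,J\) are unchanged. It sends \(K_g(v)\) to \(K_g(\bar p v p)\), and \(K_gK_g^T\) is unchanged because \(\Ad_p\) is orthogonal on \(\im\HH\). Hence \(\diag(1,q)\) acts isometrically.

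\emph{Descent to the quotient.} The action is free and isometric, so the Riemannian submersion quotient \(g_{\varepsilon,t}\) exists on the fixed smooth manifold \(\Sigma\). Its dependence on \((\varepsilon,t)\) and the point is smooth: the horizontal projection is a smooth function of the upstairs metric, since the orbit directions form a fixed smooth rank-3 distribution. I would also note that \(\widehat P_{0,0}\) is exactly the Cheeger deformation of \(P_0\) by \(\diag(1,\Sp(1))\) with parameter 10, using the standard formula \((P^{-1}+T\,KK^T)^{-1}\). This justifies calling \(g_{0,0}\) the left-lower deformation of \(g_{\rm EK}\).

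The main obstacle is the bookkeeping in the equivariance of \(N\). In particular, I must verify that \(\zeta\), viewed through the coordinates \eqref{eq:orth-frame}, transforms via \(\Ad_{\diag(q,q)}\), including the \(\rho\) component \(\re(\bar ab)\), which is invariant, and that the projection \(O\) commutes with that adjoint action.
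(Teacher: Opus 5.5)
Your proposal is correct and follows the paper's own argument. Both use polynomial, \(\Ad_q\)-equivariant coefficient blocks, then compactness with \(P_0\ge\tfrac14I\) for positivity of the perturbation, the Cheeger inverse formula for the \(10K_gK_g^T\) term, and smooth descent under the free GM action.

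One harmless slip: \(g'^{-1}=\diag(q,q)\,g^{-1}\diag(\bar q,1)\), so \(K_{g'}=\Ad_{\diag(q,q)}K_g\) rather than \(\Ad_{\diag(\bar q,\bar q)g^{-1}}\diag(0,\cdot)\). With the corrected element this is exactly the conjugation your identity \(\widehat P(g')=\Ad_{\diag(q,q)}\widehat P(g)\Ad_{\diag(q,q)}^{-1}\) requires.
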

\begin{proof}
All entries of \(H_N,H,J\) are polynomials in the real components of
\(a,b\). Under \eqref{eq:GM-action}, both first-row quaternions are
conjugated by \(q\); the imaginary coordinate blocks transform by
\(\Ad_q\). The dot products, cross products and adjoint operations in
\eqref{eq:annihilator}--\eqref{eq:J} are equivariant. Left multiplication
by \(\diag(1,q)\) fixes the first row and the left-trivialized tangent
coordinates. The positive operator \(P_0\) has a uniform lower bound,
so compactness gives positivity of its small perturbations. Adding
\(10K_gK_g^T\) to the inverse preserves positivity and smoothness,
without any assumption about the rank of an orbit. This is precisely
Cheeger deformation for the left-lower action; the inverse-operator
formula follows by solving its horizontal lift, as in
\cite[Proposition 1.1]{Z09}. The free GM action then
gives a smooth family of quotient metrics.
\end{proof}

\section{The zero-curvature planes}\label{sec:zero-families}

The relevant zero set lies in \(\Gr_2(T\Sigma)\), a compact
seventeen-dimensional manifold. We first describe it before the final
left-lower Cheeger deformation. For quaternionic calculations write
\[
 P(x)=\begin{pmatrix}0&-\bar x\\x&0\end{pmatrix},\qquad
 D(y,z)=\diag(y,z),\quad
 \mathsf H(y)=D(y,y),\quad Q_-(y)=D(y,-y).
\]
Let \(\mathfrak p,\mathfrak q,\mathfrak h\) be the off-diagonal,
diagonal anti-diagonal and diagonal-diagonal subspaces, with
\(\mathfrak k=\mathfrak q\oplus\mathfrak h\). Use the weighted
variables
\(U=\Phi X,V=\Phi Y\), where
\(\Phi X=X_{\mathfrak p}+X_{\mathfrak q}+\tfrac12X_{\mathfrak h}\).
The classification in~\cite[Lemma 3.1 and Section 4]{EK08} leaves two
possible horizontal zero-plane forms:
\begin{align}
 \Afamily\text{: }&\quad U=P(x),\quad V=D(y,z),\quad z=xyx^{-1},
 \label{eq:A-zero-form}\\
 \Bfamily\text{: }&\quad U=P(x)+\mathsf H(y),\quad V=Q_-(y),
               \quad x,y\in\im\HH,\quad x\perp y.
 \label{eq:B-zero-form}
\end{align}
In \(\Afamily\), \(x,y\ne0\); in \(\Bfamily\), both vectors are nonzero. The remaining
types in the upstairs classification are nowhere horizontal.

Let \(\mathcal G=\Sp(2)\times\Gr_2(\mathfrak{sp}(2))\), and let
\(\mathcal G_{\rm hor}\) consist of the planes horizontal for the
GM submersion with metric \(P_0\). These spaces have dimensions 26
and 20. The free GM quotient identifies
\(\mathcal G_{\rm hor}/\Sp(1)\) with \(\Gr_2(T\Sigma)\).
Distances on the latter bundle use a fixed auxiliary metric.

\begin{lemma}\label{lem:EK-incidence}
The horizontal \(\Afamily\) and \(\Bfamily\) forms define disjoint compact embedded
submanifolds \(\widehat{\mathcal Z}_{\Afamily},\widehat{\mathcal Z}_{\Bfamily}\)
of \(\mathcal G_{\rm hor}\), of dimensions nine and eight.
Their GM quotients \(\ZA^{\rm EK},\ZB^{\rm EK}\) have dimensions
six and five and constitute the zero set of \(g_{\rm EK}\).
\end{lemma}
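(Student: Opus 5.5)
We would parametrize each family explicitly, read off dimensions from the parametrizations and their stabilizers, check embeddedness and disjointness directly, and take the zero-set statement from the Eschenburg--Kerin classification. Throughout, horizontality of a plane at \(g\) means that \(X,Y\) are \(P_0\)-orthogonal to the vertical space of \eqref{eq:GM-action}. Writing \(\mathcal V_g\) for that vertical space, horizontality is the condition \(Q(P_0X,W)=Q(P_0Y,W)=0\) for all \(W\in\mathcal V_g\).

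\textbf{Step 1 (the \(\Afamily\) family).} The plane is \(\spanof\{P(x),D(y,xyx^{-1})\}\), pulled back through \(\Phi^{-1}\). It depends only on the line through \(x\), the line through \(y\) and the basepoint \(g\). We impose horizontality at \(g\), which gives six linear conditions on the plane data. We then show that the data with nonzero \(x,y\) and the horizontality constraints form a smooth submanifold. The method is to write the constraints as a map to \(\R^6\) (three conditions each for \(X\) and \(Y\)) and verify that its differential is surjective. The count is \(\dim\Sp(2)+2+2=14\) free parameters, reduced by the rank of the constraint map, and should give nine; we will confirm this by computing that rank explicitly.

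\textbf{Step 2 (the \(\Bfamily\) family).} The same argument applies with \(x\perp y\). This gives \(10+3\) parameters before horizontality, and the target dimension is eight. Compactness in \(\mathcal G_{\rm hor}\) follows because both the closure of scale-invariant plane data with \(x,y\neq0\) and the horizontality condition are closed. We must rule out degenerations in which \(x\to0\) or \(y\to0\) within a convergent sequence of planes. To do this we normalize \(|x|=|y|=1\); the plane still varies continuously, and the form stays nondegenerate.

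\textbf{Step 3 (disjointness).} In \(\Afamily\), \(V\) has no off-diagonal part and \(U\) is purely off-diagonal. In \(\Bfamily\), every plane contains the anti-diagonal vector \(Q_-(y)\) and the \(\mathfrak h\)-component \(\mathsf H(y)\ne0\). The \(\mathfrak p\)-projection of an \(\Afamily\) plane is at most a line, while a \(\Bfamily\) plane has \(\mathfrak k\)-projection spanning \(\mathsf H(y),Q_-(y)\). Comparing the \(\mathfrak k\)-projections then separates the two families: in \(\Afamily\) the \(\mathfrak k\)-projection is \(D(y,z)\) with \(z=xyx^{-1}\), and this is never proportional to the \(\Bfamily\) pattern with its orthogonality \(x\perp y\).

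\textbf{Step 4 (quotient).} The GM action is free and preserves both forms, since conjugation by \(q\) maps \(x,y\) equivariantly. Quotienting by \(\Sp(1)\) therefore lowers dimensions by three and gives \(\ZA^{\rm EK}\) and \(\ZB^{\rm EK}\) of dimensions six and five. The zero-set assertion is \cite[Lemma 3.1 and Section 4]{EK08} together with O'Neill's formula. A horizontal zero plane for \(g_{\rm EK}\) must lift to a zero plane upstairs with vanishing A-tensor, and the listed forms are exactly the horizontal ones.

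\textbf{Main obstacle.} The expected difficulty is the transversality and rank computation in Steps 1--2, that is, proving that the horizontality constraints have constant rank along the whole family so that the sets are genuinely embedded. The first approach would be to use the left \(\Sp(2)\)-equivariance \(g\mapsto hg\) to reduce to computing the rank at a normal form of the first row \((a,b)\), and then handle the points where \(m\in\{0,1\}\) separately.
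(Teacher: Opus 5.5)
Your overall route matches the paper's: cut out each family by the six horizontality equations in a compact parameter space, prove transversality, pass to \(\mathcal G_{\rm hor}\) through \(\Phi^{-1}\), divide by the free GM action, and cite the Eschenburg--Kerin classification. However, the dimension bookkeeping is wrong in both families.

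In the \(\Afamily\) form, \(x\) is an arbitrary nonzero quaternion. Indeed \(P(x)\in\mathfrak{sp}(2)\) for every \(x\in\HH\), and on the family \(x=\bar BA\) is a general unit quaternion. So \([x]\in\R P^3\), and the parameter space has dimension \(10+3+2=15\), not \(14\).

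In the \(\Bfamily\) form, only a \emph{common} rescaling of \((x,y)\) fixes the plane \(\spanof\{P(x)+\mathsf H(y),Q_-(y)\}\). Hence \(|x|/|y|\) is a genuine parameter, and the count is \(10+2+2=14\), not \(13\).

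The constraint map to \(\R^6\) is in fact submersive, so your counts would yield dimensions \(8\) and \(7\). Your phrase ``should give nine'' would require rank five, which is false. The same oversight breaks your compactness step for \(\Bfamily\). You may not normalize \(|x|=|y|=1\) there, and no horizontal \(\Bfamily\) plane has \(|x|=|y|\). What rules out \(x\to0\) or \(y\to0\) is that horizontality forces \(|a|=|b|=1/\sqrt2\), and then \(|x|=\tfrac{\sqrt3}{2}|y|\).

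The transversality computation, which is the substance of the lemma, is not done. The reduction you propose is also unavailable. Left translation \(g\mapsto hg\) commutes with \(g\mapsto\diag(q,1)g\diag(\bar q,\bar q)\) only for \(h=\diag(\pm1,p)\), which fixes the first row up to sign. So it neither preserves horizontality nor normalizes \((a,b)\), and the equations genuinely depend on \(m=|a|^2\).

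The paper instead works at a general first row.

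For \(\Afamily\) with \(ab\ne0\), write \(a=rA_0\), \(b=sB_0\). The first equation fixes \([x]=[B_0^{-1}A_0]\) with rank three. The second becomes \(w=y+z\) with \(y=\Ad_{A_0^{-1}}w\) and \(z=\Ad_{B_0^{-1}}w\). Unit lengths force \(\ip yz=-1/2\), so \(y,z\) are nonparallel and rotation variations fill \(y^\perp+z^\perp=\R^3\). The poles \(b=0\) and \(a=0\) are handled by the same nonparallelism.

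For \(\Bfamily\), the first equation is equivalent to \(|a|=|b|=1/\sqrt2\) together with \([a^{-1}b,y]=0\), and has rank three. The second equation solves for the two components of \(x\) in \(y^\perp\). The remaining condition \(\ip{\Ad_{a^{-1}}y}{y}=1/2\) has nonzero rotational differential.

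Finally, constant rank of the constraints only makes the parameter sets smooth. You still need the normal-form maps into \(\mathcal G_{\rm hor}\) to be injective immersions. The paper obtains this by reading off the parameters, and their first-order variations, from the intersection of the plane with \(\mathfrak k\), and also with \(\mathfrak p\) in the \(\Afamily\) case.
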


\begin{proof}
On \(\Afamily\) normalize \(|x|=|y|=1\).
Over \(g\in\Sp(2)\) with first row \((a,b)\), horizontality is
\begin{equation}\label{eq:A-horizontal}
 \im(bx\bar a)=0,\qquad
 ay\bar a+bz\bar b=y+z,\qquad z=xyx^{-1}.
\end{equation}
Take the solution set in \(\Sp(2)\times S^3\times S^2\) and divide
by the independent signs of \(x,y\). It is compact. If \(ab\ne0\),
write \(a=rA_0,b=sB_0\), with \(r,s>0\) and unit quaternions
\(A_0,B_0\). The first equation gives \([x]=[B_0^{-1}A_0]\);
its differential in the projective \(x\) variable has rank three.
Consequently \(w=\Ad_{A_0}y=\Ad_{B_0}z\), and the second equation
becomes
\[
 w-y-z=0,\qquad y=Aw,\quad z=Bw,\qquad
 A=\Ad_{A_0^{-1}},\quad B=\Ad_{B_0^{-1}}.
\]
The variables \((A,B,w)\) are independent rotation and unit-vector
coordinates before imposing this last equation. Since
\(|w|=|y|=|z|=1\) and \(w=y+z\), one has
\(\ip yz=-1/2\); in particular \(y,z\) are nonparallel.
Rotation variations therefore have image
\((Aw)^\perp+(Bw)^\perp=\R^3\). Thus the six horizontal equations
are transverse also when \(\dim\ker(I-A-B)=2\).

At \(b=0\), the first equation has differential
\(\dot b\mapsto\im(\dot b\,x\bar a)\), of rank three. The second
is \(Ry-y-z=0\), where \(R=\Ad_a\). The same unit-length identity
gives \(\ip yz=-1/2\). Variations of \(a,x\) give
\((Ry)^\perp+z^\perp=\R^3\), since \(Ry=y+z\) is not parallel
to \(z\). These variations leave the first equation unchanged at
the pole, so the six rows still have full rank. The case \(a=0\) is
the same. The \(\Afamily\) incidence upstairs
therefore has dimension \(10+3+2-6=9\).

For \(\Bfamily\) normalize \(|y|=1\), use \(x\in y^\perp\setminus0\), and
divide by \((x,y)\sim(-x,-y)\). Its horizontal equations are
\begin{equation}\label{eq:B-horizontal}
 ay\bar a=by\bar b,\qquad
 2\im(bx\bar a)+\tfrac12(ay\bar a+by\bar b-2y)=0.
\end{equation}
The first is equivalent to \(|a|=|b|=1/\sqrt2\) and
\([a^{-1}b,y]=0\). Its differential has rank three, also when
\(a^{-1}b=\pm1\): after conjugation by \(a^{-1}\), row-norm variations
supply \(\R y\) and rotations supply \(y^\perp\). The second equation solves
for the two components of \(x\) in \(y^\perp\); the remaining
condition is
\(\ip{\Ad_{a^{-1}}y}{y}=1/2\), whose rotational differential is
nonzero. It also gives \(|x|=\sqrt3/2\). The solution space is thus
compact and smooth of dimension \(10+4-6=8\).

Apply \(\Phi^{-1}\) to the planes in the two normal forms to obtain
their points in \(\mathcal G_{\rm hor}\). These maps are embeddings.
For \(\Afamily\), the pure \(\mathfrak p\)-line and pure \(\mathfrak k\)-line of
\eqref{eq:A-zero-form} recover the two projective parameters. Their
differentials recover the parameters as well: a stationary plane forces
\(\dot x\in\R x\) and then \(\dot y\in\R y\), so the unit
normalizations force both to vanish. For \(\Bfamily\), intersection with
\(\mathfrak k\) first recovers \([y]\); the other line then recovers
\(x\). Infinitesimally \(\dot y=0\), after which a pure
\(\mathfrak p\) variation lying in the \(\Bfamily\) plane must vanish. The
families are disjoint, since \(\Afamily\) has a pure \(\mathfrak p\)-line and \(\Bfamily\)
does not. The free GM quotient reduces both dimensions by three and
preserves their compact embedded character. The classification cited
above shows that these two families exhaust the zero~set.
\end{proof}

\begin{lemma}\label{lem:EK-normal-gap}
There are a neighborhood of \(\ZA^{\rm EK}\cup\ZB^{\rm EK}\)
and a constant \(c_{\rm EK}>0\) on which
\[
 \sec_{g_{\rm EK}}(\sigma)\ge
 c_{\rm EK}\dist(\sigma,\ZA^{\rm EK}\cup\ZB^{\rm EK})^2.
\]
The normal curvature Hessian has rank eleven on \(\ZA^{\rm EK}\)
and rank twelve on \(\ZB^{\rm EK}\).
\end{lemma}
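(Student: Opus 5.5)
We would prove Lemma~\ref{lem:EK-normal-gap} by a Morse--Bott argument for the sectional curvature, viewed as a smooth nonnegative function on \(\Gr_2(T\Sigma)\). By Lemma~\ref{lem:EK-incidence} its zero set \(\ZA^{\rm EK}\cup\ZB^{\rm EK}\) is a disjoint union of compact embedded submanifolds, of dimensions six and five in the seventeen-dimensional Grassmann bundle. Their normal spaces therefore have dimensions eleven and twelve. The rank statement is thus equivalent to nondegeneracy of the normal Hessian.

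Once nondegeneracy is known, the quadratic lower bound is standard. Since \(\sec\ge0\) vanishes on the zero set, its first derivative vanishes there, and its Hessian is a well-defined quadratic form on the normal bundle. If that form is positive definite, Taylor's theorem in a tubular neighborhood, with uniform constants by compactness, gives \(\sec\ge c_{\rm EK}\dist^2\).

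So the work is to show positive definiteness at each zero plane. We would lift to \(\mathcal G_{\rm hor}\) and use the GM submersion. By O'Neill's formula, \(\sec_{g_{\rm EK}}(\sigma)=\sec_{P_0}(\tilde\sigma)+3|A_{U}V|^2\) for the horizontal lift \(\tilde\sigma\). The upstairs curvature of the twice Cheeger-deformed metric \(P_0\) has the Eschenburg--Kerin form: a sum of squares of brackets in the weighted variables \(U=\Phi X\), \(V=\Phi Y\). Concretely, this is a positive combination of \(|[U,V]|^2\), \(|[U_{\mathfrak k},V_{\mathfrak k}]|^2\) and \(|[U_{\mathfrak h},V_{\mathfrak h}]|^2\) (\cite[Section 2]{EK08}).

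The Hessian of a sum of squares \(\sum|F_i|^2\) at a common zero is \(2\sum|dF_i|^2\). Hence the Hessian is positive definite on the normal space exactly when the joint kernel of the linearized bracket maps, together with the linearized A-tensor, equals the tangent space of the zero family. That tangent space consists of the base motions, the variations of \(x,y,z\), and the GM orbit, whose dimensions were counted in Lemma~\ref{lem:EK-incidence}.

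We would therefore linearize at the normal forms \eqref{eq:A-zero-form} and \eqref{eq:B-zero-form}. The perturbations to allow are:
\begin{itemize}
\item \(U\mapsto U+\dot U\) and \(V\mapsto V+\dot V\), modulo the plane's own \(\Gr_2\) gauge;
\item base motion \(g\mapsto g\exp(\xi)\);
\item the horizontality constraints \eqref{eq:A-horizontal} and \eqref{eq:B-horizontal}, in linearized form.
\end{itemize}
We would then solve the linear system \(d[U,V]=0\), \(d[U_{\mathfrak k},V_{\mathfrak k}]=0\), \(dA=0\) explicitly in quaternionic coordinates. For \(\Afamily\), \(U=P(x)\) is off-diagonal and \(V\) diagonal, so the brackets split by block type and the kernel computation reduces to commutator equations in \(\im\HH\). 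For \(\Bfamily\), the relation \(x\perp y\) and the condition \(|a|=|b|=1/\sqrt2\) play the analogous role. In each case we aim to show the kernel has dimension exactly \(9\) (respectively \(8\)) upstairs.

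The main obstacle is making the kernel computation uniform over each family. This matters especially at the degenerate strata where the projection to \(\Sigma\) is singular: \(ab=0\) for \(\Afamily\), \(a^{-1}b=\pm1\) for \(\Bfamily\), and points where \(\dim\ker(I-A-B)=2\). At those points extra base directions might enter the kernel.

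Our first attempt would be to use the \(\Sp(2)\)-symmetries commuting with the construction to normalize \(x,y\) to standard quaternions. The linear algebra then depends on only the few remaining invariants, such as \(m\) and the angle in \(\ip{\Ad_{a^{-1}}y}{y}\). We would check the rank by exact symbolic computation, treating the boundary strata separately in the style of the transversality arguments of Lemma~\ref{lem:EK-incidence}.
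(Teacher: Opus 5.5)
Your outline follows the same strategy as the paper: bound the curvature below by squared bracket defects, show that the joint linearization restricted to horizontal planes has kernel exactly \(T\widehat{\mathcal Z}_\bullet\), and then Taylor expand. As written, however, it omits the two ingredients that make up the proof.

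\textbf{The curvature formula is only a lower bound.} The curvature of the twice Cheeger-deformed metric is not a positive combination of \(\norm{[U,V]}^2\), \(\norm{[U_{\mathfrak k},V_{\mathfrak k}]}^2\) and \(\norm{[U_{\mathfrak h},V_{\mathfrak h}]}^2\) in the variables \(U=\Phi X\).
\begin{itemize}
\item The product-curvature term is \(\tfrac14\norm{E_0}^2\) with \(E_0=[P_0X,P_0Y]=P_1+\lambda C_1+\lambda^2K_0\), whereas \([U,V]=P_1+C_1+K_0\).
\item The nonnegative O'Neill terms of both Cheeger submersions are also present.
\end{itemize}
So your Hessian identity \(2\sum\norm{dF_i}^2\) holds only for a lower bound of the curvature, and your ``exactly when'' must become ``if''. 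That is the direction you need, but it requires a quantitative inequality: curvature \(\ge c\cdot(\text{sum of squared defects})\). This follows from the product bound \eqref{eq:clean-product-bound} together with the symmetric-pair relations \(C_1=\lambda^{-1}(E_0)_{\mathfrak p}\), \(P_1=(E_0)_{\mathfrak k}-\lambda^2K_0\) and \(Q_1=(K_0)_{\mathfrak h}-\mu^2H_0\). These relations make \([U,V]\), \([X_{\mathfrak p},Y_{\mathfrak p}]\) and \([X_{\mathfrak q},Y_{\mathfrak q}]\) linear in \((E_0,K_0,H_0)\); this is Appendix~\ref{app:clean}.

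\textbf{The rank computation is only announced.} The kernel computation is the content of the lemma, and the uniformity obstacle you anticipate does not arise. Because \(P_0\) is left-invariant, the defects depend only on the plane in \(\mathfrak{sp}(2)\), not on \(g\). The linearization is therefore computed once at the normal forms, and every base direction lies in its kernel. The paper uses the five defects in triangular order. On \(\Afamily\), write \(\dot U=P(\xi)+D(p,q)\) and \(\dot V=P(\chi)+D(r,s)\).
\begin{itemize}
\item \([U_{\mathfrak k},V_{\mathfrak k}]\) and \([X_{\mathfrak p},Y_{\mathfrak p}]\) force \(p=\alpha y\), \(q=\beta z\), \(\chi\in\R x\), giving rank seven.
\item The \(\mathfrak q\)- and \(\mathfrak h\)-defects force \(\alpha=\beta\), since \([y,z]\ne0\), giving rank one.
\item \([U,V]\) leaves \(\xi y-z\xi+xr-sx\). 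After left multiplication by \(x^{-1}\) this is \([x^{-1}\xi,y]+r-\Ad_{x^{-1}}s\), which maps onto \(\im\HH\), giving rank three.
\end{itemize}
The total is eleven; on \(\Bfamily\) the ranks are \(3+2+2+2+3=12\). You also need the resulting kernels, of dimensions nine and eight on frame pairs, to be spanned by the tangent variations of the normal forms and the four frame changes.

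Restricting to \(T\mathcal G_{\rm hor}\) then removes exactly six dimensions. This is because the horizontal equations are transverse to the bracket-zero family, which is precisely what Lemma~\ref{lem:EK-incidence} proves, including at \(ab=0\), at \(a^{-1}b=\pm1\), and where \(\dim\ker(I-A-B)=2\). Consequently no stratum needs a separate kernel analysis, and the linearized GM \(A\)-tensor is unnecessary.
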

\begin{proof}
Choose a smooth local frame for each plane and consider the five bracket defects
\begin{equation}\label{eq:five-defects}
 [U,V],\quad[U_{\mathfrak k},V_{\mathfrak k}],\quad
 [X_{\mathfrak p},Y_{\mathfrak p}],\quad
 [X_{\mathfrak q},Y_{\mathfrak q}],\quad
 [X_{\mathfrak h},Y_{\mathfrak h}].
\end{equation}
Write \(\mathcal D\) for this tuple. A frame change multiplies all
five components by its determinant, so the zero set and the kernel of
\(d\mathcal D\) at a zero are intrinsic to the plane.
The two product-submersion formulas quantitatively bound the
\(\Sp(2)\) curvature below by a positive constant times the sum of their squared
norms; the triangular comparison is given in
Appendix~\ref{app:clean}. The joint linearization has rank eleven
on \(\Afamily\) and twelve on \(\Bfamily\). Indeed, on \(\Afamily\) write
\(\dot U=P(\xi)+D(p,q)\),
\(\dot V=P(\chi)+D(r,s)\). The second and third defects impose
\(p=\alpha y,q=\beta z,\chi\in\R x\), of total rank seven.
The next two impose \(\alpha=\beta\), because \([y,z]\ne0\).
The remaining defect is
\(\xi y-z\xi+xr-sx\), of rank three.
For \(\Bfamily\) the corresponding triangular ranks are
\(3+2+2+2+3=12\); details are in Appendix~\ref{app:clean}.
The kernels on pairs of independent vectors have dimensions nine and
eight. The tangent variations of the normal forms, together with the
four changes of basis, have precisely these dimensions and span the kernels.
These are the ranks before the horizontal constraint is imposed.
The six horizontal equations are transverse when restricted to the
corresponding bracket-zero family, by Lemma~\ref{lem:EK-incidence}.
It follows that
\[
 \ker\!\left((d\mathcal D)_z|_{T_z\mathcal G_{\rm hor}}\right)
      =T_z\widehat{\mathcal Z}_\bullet,
 \qquad z\in\widehat{\mathcal Z}_\bullet,\quad \bullet\in\{\Afamily,\Bfamily\}.
\]
Equivalently, in the bundle of independent frames the four directions
of a change of basis lie in the kernel; quotienting them leaves the
same normal rank. The three free GM orbit directions also lie in this
kernel. Thus the normal ranks on the seventeen-dimensional quotient
are eleven and twelve. On each compact zero family the least nonzero
singular value of the normal differential is bounded away from zero.
Taylor expansion of the defects and the bracket bound in
Appendix~\ref{app:clean} give the stated uniform inequality.
\end{proof}

For the final Cheeger deformation, let \(\mathcal K\) be the
infinitesimal left-lower action on \((\Sigma,g_{\rm EK})\), and let
\(\mathcal K^*\) denote its adjoint for that metric and
\(Q_L=|\cdot|^2\) on \(\im\HH\). At deformation
parameter \(T=10\), put
\(C_T=(I+T\mathcal K\mathcal K^*)^{-1}\), and define
\[
 \ZA=C_T^{-1}\ZA^{\rm EK},\qquad \ZB=C_T^{-1}\ZB^{\rm EK}.
\]
Here the inverse acts on the plane at its fixed base point.

\begin{proposition}\label{prop:clean-zero}
The zero set of \(g_{0,0}\) is the disjoint union of compact embedded
submanifolds \(\ZA\) and \(\ZB\) of dimensions six and five. Every
plane in either family lies in an immersed totally geodesic flat.
There are a neighborhood \(\mathcal U\) of their union and a constant
\(c_0>0\) such that
\begin{equation}\label{eq:clean-gap}
 \sec_{g_{0,0}}(\sigma)
 \ge c_0\dist(\sigma,\ZA\cup\ZB)^2
 \qquad(\sigma\in\mathcal U).
\end{equation}
Their normal curvature Hessians are positive definite.
\end{proposition}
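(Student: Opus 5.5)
Our plan is to transport Lemmas~\ref{lem:EK-incidence} and~\ref{lem:EK-normal-gap} through the final Cheeger deformation by the standard Cheeger reparametrization. For the left-lower action \(\mathcal K\) with parameter \(T=10\), the metric \(g_{0,0}\) is the quotient of \((\Sigma\times L, g_{\rm EK}+T^{-1}Q_L)\). The map \(C_T=(I+T\mathcal K\mathcal K^*)^{-1}\) is invertible at each point and smooth on \(T\Sigma\). It acts on planes as a smooth bundle diffeomorphism of \(\Gr_2(T\Sigma)\) covering the identity. The Cheeger formula gives, for \(\tilde u=C_T u\), \(\tilde v=C_T v\),
\[
 \sec_{g_{0,0}}\text{-numerator}(\tilde u,\tilde v)
 = R_{g_{\rm EK}}(u,v,v,u)+T^{3}R_{Q_L}(\mathcal K^*u,\mathcal K^*v)+A_T(u,v),
\]
with \(A_T\ge0\) the O'Neill term. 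All three terms are nonnegative. The curvature form of \(g_{0,0}\) on \(C_T\sigma\) is therefore bounded below by that of \(g_{\rm EK}\) on \(\sigma\), up to the positive factor relating the area forms, which is uniformly bounded on compact \(\Gr_2\).

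First, the zero set is identified. A plane \(C_T\sigma\) has zero curvature iff \(\sigma\) is a \(g_{\rm EK}\)-zero plane and the two extra terms vanish. We check that on \(\ZA^{\rm EK}\cup\ZB^{\rm EK}\) both vanish automatically. The term \(R_{Q_L}\) vanishes iff \([\mathcal K^*u,\mathcal K^*v]=0\) in \(\im\HH\), that is, iff \(\mathcal K^*\sigma\) has rank at most one. The O'Neill term vanishes iff the horizontal lift of \(\sigma\) to \(\Sigma\times L\) has vanishing vertical bracket. This is a direct computation in the normal forms \eqref{eq:A-zero-form}--\eqref{eq:B-zero-form}, projecting onto the \(\diag(0,\cdot)\) direction through \(K_g\). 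For the \(\Afamily\) form, we expect \(\mathcal K^*\) to kill the pure \(\mathfrak p\)-line. For \(\Bfamily\), we expect \(Q_-(y)\) and \(\mathsf H(y)\) to produce parallel lower entries \(\pm y\). If this holds, \(\ZA=C_T^{-1}\ZA^{\rm EK}\) and \(\ZB\) are the full zero set (with the convention of the statement). They are compact embedded submanifolds of dimensions six and five, being diffeomorphic images, and they are disjoint. The totally geodesic flats come from the flats of \(g_{\rm EK}\) (via \cite{EK08}): the product of a flat with a geodesic in \(L\) on which the extra terms vanish projects to a flat under the Riemannian submersion. It is totally geodesic since the horizontal lift of a totally geodesic flat with vanishing A-tensor stays horizontal.

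Second, the gap estimate follows. On a neighborhood of the EK zero set, Lemma~\ref{lem:EK-normal-gap} gives \(\sec_{g_{\rm EK}}(\sigma)\ge c_{\rm EK}\dist^2\). Since \(C_T\) is a diffeomorphism of the Grassmann bundle, it is bi-Lipschitz for the auxiliary metric, so \(\dist(\sigma,\cdot)\) and \(\dist(C_T\sigma,\cdot)\) are comparable. The lower bound above then yields \eqref{eq:clean-gap} on \(\mathcal U=C_T^{-1}(\text{neighborhood})\). Positive definiteness of the normal Hessians follows by the same comparison. By Lemma~\ref{lem:EK-normal-gap}, the EK normal ranks equal the codimensions \(17-6=11\) and \(17-5=12\). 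The Hessian of \(\sec_{g_{0,0}}\circ C_T\) dominates a positive multiple of the EK Hessian in the normal directions, and equality of rank and codimension gives definiteness.

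The main obstacle is the first step: verifying that the Cheeger correction terms vanish identically on every EK zero plane. This includes the singular strata \(ab=0\) and the cases where \(a^{-1}b=\pm1\). If the correction vanished only on a subset, the zero set would shrink and the dimensions would change. We would first compute \(\mathcal K^*\) explicitly via \(K_g\) and the \(P_0\)-horizontal projection, then check the bracket condition separately on the \(\Afamily\) and \(\Bfamily\) normal forms.
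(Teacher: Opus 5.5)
The skeleton of your proof matches the paper's, and that part is fine:
\begin{itemize}
\item the Cheeger comparison \(\sec_{g_{0,0}}(\sigma)\ge c_T\sec_{g_{\rm EK}}(C_T\sigma)\);
\item the resulting inclusion of the zero set in \(C_T^{-1}(\ZA^{\rm EK}\cup\ZB^{\rm EK})\);
\item the bi-Lipschitz transfer of Lemma~\ref{lem:EK-normal-gap};
\item the Hessian comparison.
\end{itemize}
Your displayed formula has the direction reversed. With \(C_T=(I+T\mathcal K\mathcal K^*)^{-1}\), the horizontal lift of \(X\) has \(\Sigma\)-component \(C_TX\), so the identity holds for the \(g_{0,0}\)-numerator at \((C_T^{-1}u,C_T^{-1}v)\). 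That is why you had to appeal to ``the convention of the statement.''

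The genuine gap is the step you flag as the main obstacle. You never prove that both Cheeger correction terms vanish on every EK zero plane. Without that you only have an inclusion, so neither the equality of zero sets nor the dimensions six and five follow. The check you propose also would not go through as written:
\begin{itemize}
\item Your \(\Afamily\) prediction is right: \(\mathcal K^*\) kills the \(P(x)\)-line.
\item Your \(\Bfamily\) prediction reads off the lower entries of \(Q_-(y)\) and \(\mathsf H(y)\) without conjugating by \(\Ad_g\). At an actual \(\Bfamily\) point, \(ay\bar a=by\bar b\) and \(|a|=|b|=1/\sqrt2\) force the lower-right entry of \(\Ad_gQ_-(y)\) to vanish. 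So \(\mathcal K^*\) kills the \(V\)-direction, and the \(U\)-direction also picks up a \(P(x)\) contribution.
\item The O'Neill term of \(\Sigma\times L\to\Sigma\) is not obtained by projecting through \(K_g\). Since \(g_{\rm EK}\) is itself a quotient metric, the vertical bracket involves covariant derivatives of the action fields for \(g_{\rm EK}\), i.e.\ the A-tensors of the earlier submersions.
\end{itemize}

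The idea you are missing is to lift all the way to the bi-invariant product source \(E=\Sp(2)\times K\times H_\Delta\times L\). There every EK zero plane has a horizontal lift with two features:
\begin{itemize}
\item its first three components form the commuting lift of the \(\Afamily\) or \(\Bfamily\) normal form;
\item its \(L\)-components are dictated by the last horizontality condition.
\end{itemize}
In the paper's pairs \eqref{eq:A-source-pair} and \eqref{eq:B-source-pair}, one of the two \(L\)-components is zero, so the full lift commutes. In a bi-invariant metric this kills the product curvature and the vertical bracket at once. Hence every plane of \(C_T^{-1}(\ZA^{\rm EK}\cup\ZB^{\rm EK})\) is a zero.

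The same lift repairs your flat argument, which assumes what it needs to prove. Vanishing of the A-tensor at the initial plane does not by itself keep the lifted surface horizontal. In \(E\) the argument runs as follows:
\begin{itemize}
\item the commuting pair exponentiates to a flat;
\item horizontality persists, because each moment \(Q_E(U,\Ad_{p^{-1}}\ell-r)\) has zero derivative along the commuting directions by Ad-invariance;
\item Tapp's theorem then gives the immersed totally geodesic flat in \(\Sigma\).
\end{itemize}
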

\begin{proof}
In the standard Cheeger coordinates, obtained by inverting the L
coordinate of E, the projection is \((p,l)\mapsto l^{-1}\cdot p\).
The horizontal lift of a vector \(X\) is
\((C_TX,-T\mathcal K^*C_TX)\) in the product with group metric
\(T^{-1}Q_L\); see also~\cite[Equation (1.2)]{Z09}.
Thus \(\sigma\mapsto C_T\sigma\) is a smooth invertible map of
the Grassmann bundle. The product curvature and O'Neill's
formula~\cite{O66} give
\[
 \sec_{g_{0,0}}(\sigma)
 \ge c_T\sec_{g_{\rm EK}}(C_T\sigma)
\]
for some \(c_T>0\): the ratio of the two Gram determinants is
smooth and positive on the compact Grassmann bundle. The commuting horizontal
lifts in Sections~\ref{sec:B-bound} and~\ref{sec:A-source} have first
three components lifting the \(\Afamily\) and \(\Bfamily\) forms above; their L components
impose the last horizontality condition. Hence both zero families are
retained. The comparison excludes all other zeros. Since \(C_T\) and
its inverse are uniformly Lipschitz on the compact Grassmann bundle,
it also transfers the quadratic estimate of Lemma~\ref{lem:EK-normal-gap}.

The commuting pairs exponentiate to totally geodesic flats in the
bi-invariant product E. Horizontality persists on each flat: for a
vertical generator with left and right components \(\ell,r\), at a
source point \(p\in E\) the
moment \(Q_E(U,\Ad_{p^{-1}}\ell-r)\) has zero derivative in either
commuting direction \(U,V\), by Ad-invariance of \(Q_E\). The same
holds for the moment of V. The flat therefore projects isometrically
and totally geodesically, as in~\cite[Theorem 1.1]{T09}.
\end{proof}

\section{The perturbation argument}\label{sec:completion}

View the sectional curvature functions \(F_\tau\) of a smooth metric
path on the fixed Grassmann bundle \(\mathcal F=\Gr_2(T\Sigma)\).
Use the fixed auxiliary metric to define its normal bundles. A point of \(\mathcal F\) specifies both a base point
and a two-plane; we call it a \emph{flag}. Suppose \(Z\)
is a compact embedded family of critical minima of \(F_0\):
\(F_0|_Z=0\), \(dF_0|_Z=0\), and its Hessian \(\mathcal H\) is
positive definite on the normal bundle of \(Z\). Proposition~\ref{prop:clean-zero}
establishes these properties for \(g_{0,0}\). Assume that the first
variation of sectional curvature vanishes on \(Z\):
\[
 L_1:=\left.\partial_\tau F_\tau\right|_{\tau=0},\qquad L_1|_Z=0.
\]
For \(z\in Z\), put
\[
 b_z=\left.dL_1\right|_{\nu_zZ},\qquad
 a_z=\left.\partial_\tau^2F_\tau(z)\right|_{\tau=0},\qquad
 \mathcal H_z=\left.\operatorname{Hess}F_0\right|_{\nu_zZ}.
\]
We identify normal covectors and vectors using the auxiliary metric.
Define
\begin{equation}\label{eq:reduced-variation}
 \calQ(z)=\inf_{\dot\sigma\in T_z\Gr_2(T\Sigma)}
 \left.\frac{d^2}{d\tau^2}F_\tau(\sigma(\tau))\right|_{\tau=0},
 \qquad \sigma(0)=z,\quad\sigma'(0)=\dot\sigma.
\end{equation}
Because \(dF_0=0\), the expression depends only on the velocity, not
the acceleration. The tangent directions to \(Z\) make no contribution,
since both \(dF_0\) and \(L_1\) vanish along \(Z\). For a normal
velocity \(v\), the second derivative is
\(a_z+2b_z^Tv+v^T\mathcal H_zv\). Consequently
\(\calQ(z)=a_z-b_z^T\mathcal H_z^{-1}b_z\).

The loss \(b^T\mathcal H^{-1}b\) measures the effect of moving the
flag towards the minimum of the perturbed curvature. For example,
\(F_\tau(v)=v^2-4\tau v+\tau^2\) is positive at the fixed point
\(v=0\), but its value at \(v=2\tau\) is \(-3\tau^2\).
The same issue occurs when a plane or its base point moves with the
metric. The required estimate is therefore a lower bound for the
minimum over all such velocities.

\begin{lemma}\label{lem:normal-minimum}
If \(\calQ\ge q_0>0\) on \(Z\), there are a fixed neighborhood
\(\mathcal U_Z\), constants \(c,\tau_0>0\), and a uniform bound
\(F_\tau\ge c\tau^2\) on \(\mathcal U_Z\) for
\(0<|\tau|<\tau_0\). The constants can be chosen uniformly in any
additional compact family for which the stated hypotheses are uniform.
\end{lemma}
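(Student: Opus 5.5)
We prove the lemma with a two-variable Taylor expansion in \((\tau,v)\) near \(Z\), in tubular coordinates. Since \(Z\) is compact and embedded, the auxiliary metric gives a tubular neighborhood identified with a neighborhood of the zero section of \(\nu Z\). There we write a flag as \((z,v)\) with \(z\in Z\) and \(|v|<r_0\). The hypotheses are \(F_0(z,0)=0\), \(d F_0(z,0)=0\), \(L_1(z,0)=0\), and \(\operatorname{Hess}F_0|_{\nu_zZ}=\mathcal H_z\ge 2h\,I\) for some \(h>0\).

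Because \(F_\tau(z,v)\) is jointly smooth in \((\tau,z,v)\), Taylor's theorem to second order in \((\tau,v)\) at \((0,z,0)\) gives
\[
 F_\tau(z,v)=\tfrac12\bigl(a_z\tau^2+2\tau\,b_z^Tv+v^T\mathcal H_zv\bigr)+R(\tau,z,v),
 \qquad |R|\le C\bigl(|\tau|+|v|\bigr)^3 .
\]
The constant term is \(F_0(z,0)=0\). The linear terms are \(\tau L_1(z,0)+dF_0(z,0)v=0\). The constant \(C\) is uniform by compactness of \(Z\times[-1,1]\) and the closed tube. We use no tangential coordinate along \(Z\); the expansion is taken fibrewise at each base point \(z\), which is legitimate because the tube coordinates are a smooth product chart locally.

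The quadratic part is the quadratic form \(\mathcal Q_z(\tau,v)\) in \((\tau,v)\) with matrix \(\begin{pmatrix}a_z&b_z^T\\ b_z&\mathcal H_z\end{pmatrix}\). Its Schur complement with respect to the positive block \(\mathcal H_z\) is exactly \(\calQ(z)=a_z-b_z^T\mathcal H_z^{-1}b_z\ge q_0\). Hence the full matrix is positive definite on \(\R\times\nu_zZ\). Its least eigenvalue is continuous in \(z\), so on the compact \(Z\) it is at least some \(\lambda>0\), and
\[
 F_\tau(z,v)\ge \tfrac12\lambda(\tau^2+|v|^2)-C'(|\tau|+|v|)^3 .
\]
Choosing \(r_0,\tau_0\) with \(C'(|\tau|+|v|)\le \lambda/8\) whenever \(|\tau|<\tau_0\) and \(|v|<r_0\) gives \(F_\tau\ge \tfrac14\lambda(\tau^2+|v|^2)\ge \tfrac14\lambda\tau^2\) on this tube. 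This bound is uniform in \(z\).

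The main obstacle is that the neighborhood \(\mathcal U_Z\) must be fixed, independent of \(\tau\); the step above already gives this, since \(r_0\) does not depend on \(\tau\). I will take \(\mathcal U_Z\) to be the tube of radius \(r_0\), and no outer region is needed.

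For the uniformity statement, every constant used comes from the following data:
\begin{itemize}
\item \(C^3\) bounds on \(F\) in \((\tau,v)\) over a compact set;
\item the lower bound for \(\mathcal H\);
\item \(q_0\) together with an upper bound on \(|b_z|\), which controls \(\lambda\).
\end{itemize}
These bounds hold uniformly over any additional compact parameter family satisfying the hypotheses uniformly, so \(c=\lambda/4\), \(\tau_0\) and \(r_0\) can be chosen uniformly there.
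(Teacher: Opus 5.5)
Your proof is correct, but its key step differs from the paper's. Both arguments start from the same uniform second-order Taylor expansion in normal coordinates \((z,\nu)\).

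The paper then works fibrewise. It uses the implicit-function theorem to locate the normal minimizer \(\nu_\tau=-\tau\mathcal H_z^{-1}b_z+O(\tau^2)\), and evaluates \(F_\tau(z,\nu_\tau)=\tfrac12\tau^2\calQ(z)+O(|\tau|^3)\). You instead note that \(\calQ(z)\) is the Schur complement of \(\mathcal H_z\) in \(\begin{pmatrix}a_z&b_z^T\\ b_z&\mathcal H_z\end{pmatrix}\). Hence the quadratic model is positive definite jointly in \((\tau,\nu)\), and you absorb the cubic remainder directly.

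Your route is more elementary. It needs neither the implicit-function theorem nor the uniform fibrewise convexity the paper uses to identify \(\nu_\tau\) as the minimum over the whole fixed fibre ball. It also gives the stronger coercive bound \(F_\tau\ge\tfrac{\lambda}{4}(\tau^2+|\nu|^2)\), which at \(\tau=0\) recovers the quadratic normal gap of \(F_0\).

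The paper's route gives the sharp leading term: the minimum over the tube is \(\tfrac12\calQ(z)\tau^2+O(|\tau|^3)\), attained near the displaced flag \(\nu_\tau\). So its \(c\) can be taken arbitrarily close to \(q_0/2\). Your constant is governed by the least eigenvalue \(\lambda\) of the joint matrix. This never exceeds \(\calQ(z)\) and can be much smaller when \(b_z\) is large compared with \(\mathcal H_z\). The lemma only asks for some \(c>0\), uniform over compact parameter families, so this difference does not matter for its use in Section~\ref{sec:completion}.
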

\begin{proof}
Use normal coordinates \((z,\nu)\) to \(Z\). Uniform Taylor
expansion gives
\[
 F_\tau(z,\nu)
 =\tfrac12\ip{\mathcal H_z\nu}{\nu}
   +\tau\ip{b_z}{\nu}+\tfrac12\tau^2a_z
   +O(|\nu|^3+|\tau||\nu|^2+\tau^2|\nu|+|\tau|^3).
\]
The normal Hessian remains uniformly positive on a fixed small tube.
The implicit-function theorem gives its unique normal minimum at
\(\nu_\tau=-\tau\mathcal H_z^{-1}b_z+O(\tau^2)\), where
\(F_\tau(z,\nu_\tau)=\tfrac12\tau^2\calQ(z)+O(|\tau|^3)\).
Shrinking the common parameter interval gives the assertion. The same
argument with the additional parameters retained is uniform by compactness.
\end{proof}

\begin{lemma}\label{lem:preserved-flat}
Let \(T\) be an immersed totally geodesic flat for \(g\), and suppose
a smooth bundle map \(D\) vanishes on \(TT\). For a smooth symmetric
operator \(S\), the tensor \(h=D^*SD\) preserves the induced metric
and total geodesy of \(T\) under \(g+\tau h\), whenever this is a
metric. Every plane tangent to a totally geodesic flat is a critical
flag for sectional curvature.
\end{lemma}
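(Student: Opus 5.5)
The plan is to treat the first claim directly from the first-variation formula of the metric. Write \(g_\tau=g+\tau h\) with \(h=D^*SD\). Since \(D\) vanishes on \(TT\), for any \(X,Y\) tangent to \(T\) we have \(h(X,Y)=\ip{SDX}{DY}=0\), so the induced metric on \(T\) is unchanged. For total geodesy, we use the characterization that \(T\) is totally geodesic for \(g_\tau\) exactly when \(g_\tau(\nabla^\tau_XY,N)=0\) for all \(X,Y\) tangent to \(T\) and all \(N\). Koszul's formula expresses \(2g_\tau(\nabla^\tau_XY,N)\) as \(2g(\nabla_XY,N)\) plus \(\tau\) times
\[
 (\nabla_Xh)(Y,N)+(\nabla_Yh)(X,N)-(\nabla_Nh)(X,Y)+2h(\nabla_XY,N).
\]
The first term vanishes because \(T\) is totally geodesic for \(g\). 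In the correction term, \(\nabla_XY\) is tangent to \(T\), so \(h(\nabla_XY,N)=0\).

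The key step is to expand the covariant derivatives of \(h\) by the product rule, \(\nabla h=(\nabla D)^*SD+D^*(\nabla S)D+D^*S\nabla D\). Every resulting term contains either \(DX\) or \(DY\), except possibly \((\nabla_XD)^*\) acting on \(Y\) or \((\nabla_N D)\) acting on tangent vectors. For \((\nabla_XD)Y=\nabla_X(DY)-D(\nabla_XY)\), extend \(Y\) tangentially along \(T\). Then \(DY\equiv0\) along \(T\), and \(\nabla_XY\) is tangent, so this term vanishes; the analogous term with \(X\) and \(Y\) swapped vanishes the same way. The expression \((\nabla_Nh)(X,Y)\) equals \(\ip{S(\nabla_ND)X}{DY}+\ip{SDX}{(\nabla_ND)Y}\) plus a \(D^*(\nabla S)D\) term. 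Each of these contains \(DX\) or \(DY\), hence is zero. This point is where a naive argument could fail, so I will write it out carefully, noting that the vanishing of \(DX\) and \(DY\) is exactly what removes the normal derivative.

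Since all of these identities hold pointwise on \(T\) and do not depend on the size of \(\tau\), the Levi-Civita connection of \(g_\tau\) agrees exactly with that of \(g\) on \(T\). To see this, apply the full Koszul formula for \(g_\tau\) itself, not just its linearization: the difference tensor \(\nabla^\tau-\nabla\) satisfies \(g_\tau((\nabla^\tau-\nabla)_XY,N)=\tfrac12\tau(\cdots)=0\). Therefore \(T\) stays totally geodesic for every \(\tau\) at which \(g_\tau\) is a metric, and its intrinsic metric is unchanged, so it remains flat.

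For the final sentence we use the Gauss equation. If \(\sigma\) is tangent to a totally geodesic flat through \(p\), then \(\sec(\sigma)=0\). In the situation where the statement is used, \(\sec\ge0\) nearby (Proposition \ref{prop:clean-zero}), so \(\sigma\) is a minimum and therefore critical. For a general metric, however, the claim is stronger, and the plan is the standard one. Differentiating \(\sec\) along a path of flags \(\sigma(s)\) spanned by \(X+sA\) and \(Y+sB\) at a moving base point gives terms \(R(X,Y,Y,A)\) and \(R(X,Y,X,B)\) together with a base-point derivative \(\nabla R(X,Y,Y,X)\).

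The vanishing of these terms comes from the flat. For a totally geodesic submanifold, \(R(X,Y)Y\) is tangent to \(T\), and it is zero because \(T\) is flat; hence the plane-variation terms vanish. Moving the base point along \(T\) keeps \(\sec\) identically zero. Moving the base point normally is handled by the Codazzi-type fact that \((\nabla_NR)(X,Y,Y,X)\) vanishes for totally geodesic flats. I expect this base-point normal derivative to be the main obstacle. I would try parallel transporting the flag along the normal geodesic and use the Jacobi/Riccati equation for the family of geodesics of \(T\). If that fails, I would restrict the claim to the nonnegatively curved setting, where criticality is immediate from minimality.
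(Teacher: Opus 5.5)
Your argument for the first assertion is correct and is essentially the paper's. The exact Koszul identity
\[
 2g_\tau(\nabla^\tau_XY-\nabla_XY,W)=\tau\{(\nabla_Xh)(Y,W)+(\nabla_Yh)(X,W)-(\nabla_Wh)(X,Y)\}
\]
vanishes for \(X,Y\) tangent to \(T\) and every \(W\). The reasons are that \(h\) and \(\nabla_Wh\) kill tangent arguments, and the tangential-derivative terms vanish by total geodesy.

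The gap is in the criticality assertion, exactly at the step you flag: you never prove that the normal base-point derivative \((\nabla_NR)(X,Y,Y,X)\) vanishes, and your fallback does not repair this. The lemma's last sentence is stated for an arbitrary metric. In the paper it is applied to the perturbed metrics \(Q_E+\varepsilon q_0^*H_N\) on the product source and then \(g_{\varepsilon,0}\), in Proposition~\ref{prop:A-preservation}, where no nonnegativity is known. Proposition~\ref{prop:clean-zero} concerns only \(g_{0,0}\). Worse, in the proof of the main results the nonnegativity of the \(\Afamily\) tube for \(g_{\varepsilon,0}\) is deduced \emph{from} criticality together with the normal Hessian, so appealing to minimality would be circular.

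The missing step needs no Jacobi or Riccati analysis. The second Bianchi identity gives
\[
 (\nabla_NR)(X,Y,Y,X)=-(\nabla_XR)(Y,N,Y,X)-(\nabla_YR)(N,X,Y,X),
\]
which involves only derivatives along \(T\). Extend \(X,Y\) as parallel fields tangent to \(T\); this is possible because \(T\) is flat and totally geodesic. Extend \(N\) as any field along \(T\). Expanding the covariant derivatives, each surviving term is one of two kinds:
\begin{itemize}
\item a derivative along \(T\) of \(g(R(Y,X)Y,N)\) or \(g(R(Y,X)X,N)\);
\item a pairing of \(R(Y,X)Y\) or \(R(Y,X)X\) with \(\nabla_XN\) or \(\nabla_YN\).
\end{itemize}
Codazzi and Gauss, with vanishing second fundamental form and flat induced metric, give \(R(X,Y)X=R(X,Y)Y=0\) identically along \(T\). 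Hence all of these terms vanish. Combined with your vertical and tangential computations, this proves the flag is critical.
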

\begin{proof}
For tangent \(U,V\) and arbitrary \(W\), one has \(h(U,W)=0\)
and \((\nabla_Wh)(U,V)=0\), since both factors \(D\) vanish on the
tangent arguments. Tangential differentiation and total geodesy also
give \((\nabla_Uh)(V,W)=0\). The exact connection-difference formula
therefore vanishes on tangent pairs, proving preservation.

For the last assertion, Gauss and Codazzi give
\(R(U,V)U=R(U,V)V=0\) along the flat. This annihilates every derivative
in a plane direction. The second Bianchi identity expresses
\((\nabla_NR)(U,V,V,U)\) as tangential derivatives of the same zero
curvature components. Take \(U,V\) parallel along the flat and extend
\(N\) normally; total geodesy keeps all the extra derivatives normal,
and Codazzi annihilates the resulting terms. Thus the point derivatives
also vanish.
\end{proof}

\begin{proposition}\label{prop:A-preservation}
The \(\varepsilon H_N\) perturbation preserves every \(\Afamily\) flat isometrically
and totally geodesically, including its horizontal product-source lift.
For small \(\varepsilon\), \(\Afamily\) consists of exact critical zeros of
\(g_{\varepsilon,0}\) and has uniformly positive normal curvature
Hessian.
\end{proposition}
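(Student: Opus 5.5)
The argument has three steps: show that \(\varepsilon H_N\) has the form \(D^*SD\) with \(D\) killing the tangent spaces of the \(\Afamily\) flats, apply Lemma~\ref{lem:preserved-flat}, and finish by a perturbation argument for the normal Hessian.

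\textbf{Step 1: the annihilation property.} Write \(H_N=(NP_0)^*\,S\,(NP_0)\) with \(S=(m-\tfrac12)M\) symmetric. This uses that \(N\) is \(Q\)-symmetric, since \(O\), \(\zeta\zeta^T\) and \(\ad_\zeta O\ad_\zeta^T\) are. Take \(D=NP_0\), with the adjoint computed for the metric \(Q(P_0\cdot,\cdot)\). The task is to verify that \(NP_0\) kills both spanning vectors of every \(\Afamily\) plane at every point of its flat. Two reductions make this finite.

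First, by Proposition~\ref{prop:clean-zero} the flat is \(\exp\) of a commuting pair. Its left-trivialized tangent vectors along the flat are therefore \(\Ad\)-translates of the initial pair. Meanwhile \(a,b\) move by the corresponding one-parameter groups, and \(N\) is built equivariantly from \(a,b\). So it suffices to check \(NP_0X=NP_0Y=0\) at the points of \(\widehat{\mathcal Z}_{\Afamily}\) themselves, for each point along the flat (equivalently, on all of \(\widehat{\mathcal Z}_{\Afamily}\), which is invariant under moving along flats).

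Second, at such a point put \(X=\Phi^{-1}P(x)\) and \(Y=\Phi^{-1}D(y,z)\). Using \eqref{eq:A-horizontal} (\(\im(bx\bar a)=0\), \(ay\bar a+bz\bar b=y+z\), \(z=xyx^{-1}\)), compute each of the three terms in \(N\). For the pure \(\mathfrak p\) vector \(P_0X=X\), the horizontality relation should identify \(x\) with a multiple of \(\bar ab\) up to the real part. Then the three terms of \(N\) cancel by \(|a|^2+|b|^2=1\), with the factor \(2m(1-m)\) matching \(|\bar ab|^2\)-type norms. For \(Y\), \(P_0Y\) is diagonal, so \(O\) and \(\zeta\zeta^T\) kill it, and \(\ad_\zeta O\ad_\zeta^T P_0Y\) must vanish. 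This reduces to \(\zeta\)-brackets with \(D(y,z)\) having zero off-diagonal part, which is the second horizontality equation.

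\textbf{Step 2: preservation and exact criticality.} Lemma~\ref{lem:preserved-flat} now gives that \(P_0+\varepsilon H_N\) preserves the horizontal product-source lifts, isometrically and totally geodesically. The extra \(10K_gK_g^T\) term is the left-lower Cheeger deformation, realized on \(E\) with \(L\) unperturbed. The lifted flat stays a flat of the product \(E\), and its horizontality persists by the moment argument of Proposition~\ref{prop:clean-zero}: the moments are linear in the perturbed metric, and \(h(U,\cdot)=0\) on tangent \(U\). Hence each \(\Afamily\) plane still lies in an immersed totally geodesic flat of \(g_{\varepsilon,0}\). By Gauss's equation it has zero curvature, and by the last assertion of Lemma~\ref{lem:preserved-flat} it is a critical flag.

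\textbf{Step 3: the normal Hessian.} \(\ZA\) is independent of \(\varepsilon\), and the curvature functions \(F_\varepsilon\) depend smoothly on \(\varepsilon\) in \(C^2\) on the compact Grassmann bundle (Lemma~\ref{lem:smooth-metric}). Since \(F_\varepsilon\) vanishes to second order along \(\ZA\), its normal Hessian is well defined and depends continuously on \(\varepsilon\). Proposition~\ref{prop:clean-zero} gives positive definiteness at \(\varepsilon=0\) with a uniform lower eigenvalue bound, and compactness of \(\ZA\) preserves this for small \(\varepsilon\).

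The main obstacle is Step 1, the algebraic identity \(NP_0X=NP_0Y=0\) on the whole \(\Afamily\) incidence, including the degenerate loci \(a=0\) or \(b=0\). I would first treat \(ab\neq0\) via the normalization \(a=rA_0\), \(b=sB_0\) of Lemma~\ref{lem:EK-incidence}, and then pass to the poles by continuity, since \(N\) is polynomial.
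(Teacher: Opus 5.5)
Your architecture is the paper's: factor \(H_N\) through \(NP_0\) on both sides, show that \(NP_0\,dq_0\) kills the flat, apply Lemma~\ref{lem:preserved-flat} on the source, keep horizontality because the vertical pairing of flat vectors is unchanged, and get the normal Hessian by continuity. Your pointwise algebra at an \(\Afamily\) flag is right in substance, with three small corrections.
First, \([\zeta,D(y,z)]=0\) follows from \(\zeta\parallel P(x)\) together with \(z=xyx^{-1}\), not from \(ay\bar a+bz\bar b=y+z\).
Second, at a row pole \(\zeta=0\) and \(m(1-m)=0\), so \(N=0\) outright and no continuity argument is needed.
Third, \(H_N=(NP_0)^TS(NP_0)\) with \(S=(m-\tfrac12)M\) uses the \(Q\)-transpose, not the adjoint for \(Q(P_0\cdot,\cdot)\); this is harmless, since Lemma~\ref{lem:preserved-flat} only needs the two-sided factorization.

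The gap is the reduction from the whole flat to \(\widehat{\mathcal Z}_{\Afamily}\). The \(\Sp(2)\)-projection of the source flat is \(\psi(\xi,\eta)=g\exp(\xi P(x))\exp(\eta D(y,z))\exp(\eta\mathsf H(w/2))\), with \(w=y+z\). This is not an orbit of a group acting by symmetries of \(N\) and \(P_0\):
\(N\) is equivariant under right multiplication by \(K\), but \(P_0\) is not \(\Ad_K\)-invariant. Right multiplication by \(e^{\xi P(x)}\) is no symmetry of \(N\) at all, since \(N\) vanishes wherever \(r_\xi s_\xi=0\).
So ``Ad-translates plus equivariance'' does not show that the flags along the flat are again \(\Afamily\) flags, and the invariance of \(\widehat{\mathcal Z}_{\Afamily}\) under moving along flats is only asserted.

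This is exactly the step the paper computes. Because \(x=\bar BA\), the first row of \(\psi\) is \((r_\xi A_\eta,s_\xi B_\eta)\). The identities \(\Ad_{A_\eta}y_\eta=\Ad_{B_\eta}z_\eta=y_\eta+z_\eta\) and \(z_\eta=x_\eta y_\eta x_\eta^{-1}\) persist. The tangent fields are \(P(x_\eta)\) and \(\Phi^{-1}D(y_\eta,z_\eta)\), and \(\zeta=-r_\xi s_\xi P(x_\eta)\). This also handles the pole crossings, which occur precisely in the \(\xi\)-direction.

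Alternatively, you could prove the invariance abstractly. By Proposition~\ref{prop:clean-zero}, the projected flat is a totally geodesic \(g_{0,0}\)-flat, so all its tangent planes are zero planes. By connectedness and disjointness from \(\ZB\), they lie in \(\ZA\). The \(dq_0\)-images of the source planes are then GM-horizontal lifts of \(C_T\)-images of these planes, up to the isometric left-lower action. Hence they lie in \(\widehat{\mathcal Z}_{\Afamily}\), where your pointwise check applies. Either argument closes the gap, but the proposal as written supplies neither.
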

\begin{proof}
One has \(\norm{\zeta}_Q^2=2m(1-m)\) and
\(\ad_\zeta^T=-\ad_\zeta\). Hence
\begin{equation}\label{eq:N-gram}
 Q(NW,W)=\norm{\zeta\wedge OW}_Q^2+
                         \norm{O[\zeta,W]}_Q^2.
\end{equation}
For nonpolar \(\Afamily\) data, \eqref{eq:A-horizontal} gives
\(\zeta\parallel P(x)\) and
\([\zeta,D(y,z)]=0\). Since \(P_0\) is the identity on
\(\mathfrak p\) and \(P_0\Phi^{-1}D(y,z)=D(y,z)/2\), it follows
that \(NP_0\) kills \(P(x)\) and \(\Phi^{-1}D(y,z)\) in the
\(\Sp(2)\) projection of the \(\Afamily\) flat. At a row pole,
\(\zeta=N=0\), so the same statement holds without division by a row
entry. Equation~\eqref{eq:HN} has the required two factors of \(NP_0\).

To check annihilation along the whole flat, write
\(a=rA,b=sB\), where \(r^2+s^2=1\), \(|A|=|B|=1\), and
\(x=\bar BA\). At a row pole this last relation determines the
otherwise unused unit quaternion. Put \(w=y+z\). The horizontal
identities become \(\Ad_Ay=\Ad_Bz=w\), and the projection of the
source flat is
\[
 \psi(\xi,\eta)=g\exp(\xi P(x))\exp(\eta D(y,z))
                                  \exp(\eta\mathsf H(w/2)).
\]
Set
\[
 \begin{gathered}
 q_\eta=\exp(\eta w/2),\qquad
 r_\xi=r\cos\xi+s\sin\xi,\quad
 s_\xi=s\cos\xi-r\sin\xi,\\
 A_\eta=q_\eta^2Aq_\eta,\qquad B_\eta=q_\eta^2Bq_\eta,\qquad
 (x_\eta,y_\eta,z_\eta)=\Ad_{q_\eta^{-1}}(x,y,z).
 \end{gathered}
\]
Since \(P(x)\) commutes with \(D(y,z)\), the first row of \(\psi\)
is \((r_\xi A_\eta,s_\xi B_\eta)\). Moreover,
\[
 \Ad_{A_\eta}y_\eta=\Ad_{B_\eta}z_\eta=w=y_\eta+z_\eta,
 \qquad z_\eta=x_\eta y_\eta x_\eta^{-1}.
\]
The left-trivialized coordinate tangent fields are
\(P(x_\eta)\) and \(\Phi^{-1}D(y_\eta,z_\eta)\).
At this point \(\zeta=-r_\xi s_\xi P(x_\eta)\), so the preceding
calculation annihilates both tangent fields for every \(\xi,\eta\),
including crossings of a row pole.

On the product source, therefore, the bundle map \(NP_0\,dq_0\)
annihilates the tangent bundle of the entire flat, with
\(q_0:E\to\Sp(2)\) as defined in Section~\ref{sec:metric}.
The pulled-back perturbation pairs a flat
tangent vector with every source vector to zero. Its pairing with the
vertical space is unchanged, so the flat remains horizontal.
Apply Lemma~\ref{lem:preserved-flat} on the source and then take the
remaining quotients.
The planes stay full critical zeros. Their normal Hessian remains
uniformly positive by Proposition~\ref{prop:clean-zero}, smoothness and
compactness.
\end{proof}

We record the two estimates proved in the next sections:
\begin{align}
 \calQ_{\Bfamily}(H_N)&>1/48 &&\text{on }\ZB,\label{eq:B-main-bound}\\
 \calQ_{\Afamily}(H,J)&>1/8 &&\text{on }\ZA.\label{eq:A-main-bound}
\end{align}
The second uses the path \(tH+t^2J\) at \(\varepsilon=0\), and
the first uses \(\varepsilon H_N\) at \(t=0\). The first curvature
variation of \(H\) vanishes on \(\Afamily\), as proved in
Lemma~\ref{lem:A-first-null} below.

\begin{proof}[Proof of the main results]
Choose disjoint fixed tubes around \(\ZA\) and \(\ZB\).
Lemma~\ref{lem:normal-minimum} and \eqref{eq:B-main-bound} give
\(F_{\varepsilon,0}\ge c_{\Bfamily}\varepsilon^2\) on the \(\Bfamily\) tube.
Proposition~\ref{prop:A-preservation} gives a nonnegative fixed \(\Afamily\) tube
whose only zeros are \(\Afamily\). The remaining compact set stays positive by
continuity. Thus \(g_{\varepsilon,0}\) is nonnegative for all
sufficiently small \(\varepsilon>0\), with zero set exactly \(\Afamily\).

The first \(t\)-variation on the retained flats is independent of
\(\varepsilon\). Their induced metric and source horizontal lifts are
unchanged; the intrinsic first variation therefore uses the same tensor
restriction. The Gauss term has zero first variation at a totally
geodesic flat. Thus the first \(t\)-variation is still zero on \(\Afamily\).
The reduced second variation and the normal Hessian depend continuously
on \(\varepsilon\). By \eqref{eq:A-main-bound} they remain uniformly
positive on \(\Afamily\) for \(0\le\varepsilon\le\varepsilon_0\), after
shrinking \(\varepsilon_0\). The uniform version of
Lemma~\ref{lem:normal-minimum} gives
\(F_{\varepsilon,t}\ge c_{\Afamily} t^2\) on a fixed \(\Afamily\) tube, with a fixed
admissible interval for \(t\).

On the complement of a smaller \(\Afamily\) tube there is a uniform lower bound
\(F_{\varepsilon,0}\ge c_1\varepsilon^2\): this follows from the \(\Bfamily\)
estimate and the positive minimum on the remaining compact set.
Smoothness gives a uniform bound
\(|F_{\varepsilon,t}-F_{\varepsilon,0}|\le C|t|\).
Choose \(\eta>0\) with \(C\eta\le c_1/2\), and decrease
\(\varepsilon_0\) so that \(0<t\le\eta\varepsilon^2\) stays in the
fixed admissible interval for \(t\). The complement then has curvature
at least \(c_1\varepsilon^2/2\), while the \(\Afamily\) tube has curvature at least
\(c_{\Afamily} t^2\). With \(c=\min\{c_{\Afamily},c_1/2\}\), this proves
Corollary~\ref{cor:parameter-region}.

Decrease \(\varepsilon_0\) further so that
\(\varepsilon_0\le\min\{1,\eta\}\). The path \(t=\varepsilon^3\)
then lies in this region and satisfies
\(\min\{t^2,\varepsilon^2\}=\varepsilon^6\), proving the curvature
bound in Theorem~\ref{thm:main}. Finally,
Lemma~\ref{lem:smooth-metric} gives smooth dependence on the parameters
on the fixed compact \(\Sigma\), so
\(g_{\varepsilon,\varepsilon^3}\to g_{0,0}\) in \(C^\infty\).
\end{proof}

\section{Curvature variation on the product source}\label{sec:source-variation}

For a metric \(g\) with Levi-Civita connection \(\nabla\), we use
\[
 R^g(X,Y)Z=\nabla_X\nabla_YZ-\nabla_Y\nabla_XZ-\nabla_{[X,Y]}Z.
\]
Its curvature numerator is \(\kappa_g(X,Y)=g(R^g(X,Y)Y,X)\).
For linearly independent \(X,Y\),
\[
 \sec_g(X\wedge Y)=\frac{\kappa_g(X,Y)}{\Gamma_g(X,Y)},\qquad
 \Gamma_g(X,Y)=g(X,X)g(Y,Y)-g(X,Y)^2.
\]
We use ordinary Taylor coefficients along a smooth flag curve:
\[
 F_\tau(\sigma_\tau)=F^{[0]}+\tau F^{[1]}+\tau^2F^{[2]}+O(\tau^3).
\]
Thus the second derivative is \(2F^{[2]}\), and the reduced variation
in \eqref{eq:reduced-variation} is \(2\inf_{\dot\sigma}F^{[2]}\).

The unperturbed metric on the product source \(E\) is
\begin{equation}\label{eq:product-metric}
 Q_E=Q_G\oplus Q_K\oplus |\cdot|^2\oplus\tfrac1{10}|\cdot|^2.
\end{equation}
On \(\mathfrak{sp}(2)\) we use the rational frame
\(D(d_1,d_2)+P(\rho+\omega)\). Thus
\(Q_G=\diag(I_6,2I_4)\), and \(Q_E\) has matrix
\(\diag(I_6,2I_4,I_9,I_3/10)\). The relation to
\eqref{eq:orth-frame} is
\begin{equation}\label{eq:frame-conversion}
 \rho_{\rm orth}=-\sqrt2\,\rho_{\rm rat},\qquad
 \omega_{\rm orth}=\sqrt2\,\omega_{\rm rat}.
\end{equation}
The change-of-frame matrix is
\(T_{\rm fr}=\diag(I_6,-\sqrt2,\sqrt2I_3)\); a tensor matrix
transforms by \(B_{\rm rat}=T_{\rm fr}^TB_{\rm orth}T_{\rm fr}\).
In particular, for
\(h=H/\sqrt2\) the rational-frame \(d_2\rho\) block is
\(+4\mathbf b\). This convention will also separate
\(J=J_r+\sqrt2J_i\) into rational tensors.

The first two quotients of \(E\) give
\(q_{KH}\times\operatorname{id}_L:E\to\Sp(2)\times L\).
The induced metric on the \(\Sp(2)\) factor has operator \(P_0\):
its inverse is the identity on the off-diagonal
subspace and has diagonal block
\(\left(\begin{smallmatrix}3I&I\\I&3I\end{smallmatrix}\right)\).
A pullback \(q_0^*h\) kills those quotient fibres and leaves their
horizontal distribution unchanged. Therefore
\[
 Q_E+\varepsilon q_0^*H_N+tq_0^*H+t^2q_0^*J
\]
induces exactly the linear/quadratic \(\Sp(2)\) metric in
\eqref{eq:two-parameter-metric} before its last Cheeger deformation,
together with the \(L\) metric in \eqref{eq:product-metric}.
The remaining Cheeger and GM quotients give \(g_{\varepsilon,t}\).

For the following local calculation, write \(Q\) for the constant
source metric and \(h,j\) for symmetric tensors invariant under the fibre action. All source
derivatives \(D_W\) use its left-invariant frame. Let \(U,V\) be a
horizontal commuting pair. Write \(L,R:\R^{15}\to T E\) for the
left and right infinitesimal fibre actions at the source point, and put
\begin{equation}\label{eq:fibre-data}
 \mathsf V=L-R,\qquad G=\mathsf V^TQ\mathsf V,
 \qquad C=(L+R)^TQ.
\end{equation}
The combined fibre action is free, so \(G\) is positive definite.
A superscript \(T\) denotes ordinary matrix transpose; the metric
adjoint of an endomorphism \(A\) is \(Q^{-1}A^TQ\).

For a curve of quotient flags, choose a smooth local GM representative
in \(\Sp(2)\) and keep the other source point coordinates at the
identity. Lift a basis by orthogonal projection to the horizontal space
of the varying metric. The invertible vertical Gram matrix makes these
lifts smooth. Their first-order displacement has the form
\(\delta=(D,\dot U,\dot V)\in\R^{10+22+22}\), after using the
fibre action to represent the point displacement in the \(\Sp(2)\)
factor. Differentiated metric-dependent horizontality reads
\begin{equation}\label{eq:affine-horizontal}
 \mathsf E\delta+f=0,\qquad
 \mathsf E\delta=
 \binom{\dot{\mathsf V}_D^{\,T}QU+\mathsf V^TQ\dot U}
       {\dot{\mathsf V}_D^{\,T}QV+\mathsf V^TQ\dot V},\qquad
 f=\binom{\mathsf V^ThU}{\mathsf V^ThV}.
\end{equation}
Here \(\dot L_D=-\ad_D L\) in the G factor and \(\dot R_D=0\).
Define the bracket derivative
\(\beta=\mathsf B\delta=[\dot U,V]+[U,\dot V]\).
Every curve of points and planes supplies a velocity satisfying
\eqref{eq:affine-horizontal}. The fifty-four source coordinates include
gauge and frame directions. The matrices \(\mathsf E,\mathsf B\)
have sizes \(30\times54,22\times54\), respectively; \(f\) and the
constraint multiplier \(\lambda\) have thirty components.

Define the connection variation \(\mathcal C_h\) by
\begin{equation}\label{eq:connection-variation}
 2Q\mathcal C_h(A,B)
 =(D_Ah)B+(D_Bh)A-D_\cdot h(A,B)
             -\ad_B^ThA-\ad_A^ThB.
\end{equation}
The covector \(D_\cdot h(A,B)\) has value \(D_Wh(A,B)\) on \(W\).
Set
\begin{align}
 c_h&=\norm{\mathcal C_h(U,V)}_Q^2
             -\ip{\mathcal C_h(U,U)}{\mathcal C_h(V,V)}_Q,
 \label{eq:source-constant}\\
 P_h&=\tfrac12(D_UD_V+D_VD_U)h(U,V)
              -\tfrac12D_U^2h(V,V)-\tfrac12D_V^2h(U,U),
 \label{eq:principal-symbol}\\
 k_h&=\tfrac32\{(D_Uh)V-(D_Vh)U\}
                  +\tfrac12(\ad_V^ThU-\ad_U^ThV),
 \label{eq:source-lower}\\
 \kappa&=dP_h+\mathsf B^Tk_h,
 \label{eq:source-kappa}\\
 \omega_h&=-(\ad_UL)^ThV+(\ad_VL)^ThU
                    +\mathsf V^T\{(D_Uh)V-(D_Vh)U\}.
 \label{eq:source-omega}
\end{align}
The differential in \eqref{eq:source-kappa} is taken in all the point
and vector variables \((D,\dot U,\dot V)\), before restricting to
the zero family. It includes the normal derivatives of \(P_h\), even
when \(P_h\) vanishes on the family.

\begin{proposition}\label{prop:source-quadratic}
Suppose the first curvature variation of \(h\) vanishes at the
horizontal commuting pair. For the source metric \(Q+\tau h\), the
second Taylor coefficient of the quotient curvature numerator along any
actual flag curve, with source velocity \(\delta\), is
\begin{equation}\label{eq:full-source-quadratic}
 \Phi_h(\delta)=c_h+\kappa^T\delta+
        \tfrac14\norm{\mathsf B\delta}_Q^2+
        \tfrac34\norm{\omega_h+C\mathsf B\delta}_{G^{-1}}^2.
\end{equation}
The sectional coefficient is \(\Phi_h/\Gamma\), where
\(\Gamma=\norm{U}_Q^2\norm{V}_Q^2-\ip UV_Q^2\).
For the path \(Q+\tau h+\tau^2j\), it is
\(\Phi_h/\Gamma+L_j\), where \(L_j\) is the first sectional
curvature variation of \(j\) at the fixed flag.
\end{proposition}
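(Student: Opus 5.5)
We compute the quotient curvature numerator through O'Neill's formula on the source. For the metric \(Q_\tau=Q+\tau h\) on \(E\), the quotient numerator of the projected flag is
\[
 \kappa^{\rm quot}_\tau=\kappa_{Q_\tau}(\tilde U_\tau,\tilde V_\tau)
 +\tfrac34\norm{[\tilde U_\tau,\tilde V_\tau]^{\mathcal V}}_{Q_\tau}^2 ,
\]
where \(\tilde U_\tau,\tilde V_\tau\) are the horizontal lifts for \(Q_\tau\) and \({}^{\mathcal V}\) is the \(Q_\tau\)-vertical projection. The plan is to Taylor-expand each of the two terms to second order in \(\tau\) along a flag curve with source velocity \(\delta=(D,\dot U,\dot V)\). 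We use that at \(\tau=0\) the pair \(U,V\) is horizontal and commuting in the bi-invariant metric \(Q\), so both terms vanish, together with their first-order parts under the hypothesis.

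\textbf{Step 1: the source curvature term.} For a left-invariant frame on a Lie group with constant \(Q\), we write \(\nabla^\tau=\nabla^Q+\tau\mathcal C_h+O(\tau^2)\), with \(\mathcal C_h\) given by the Koszul formula, which is \eqref{eq:connection-variation}. The second-order Taylor coefficient of \(\kappa_{Q_\tau}\) at the flag, with the vectors also moving, has three parts:
\begin{itemize}
\item the quadratic Gauss-type term \(\ip{\mathcal C_h(U,V)}{\mathcal C_h(U,V)}-\ip{\mathcal C_h(U,U)}{\mathcal C_h(V,V)}\), which gives \(c_h\);
\item the mixed term \(\tau\cdot\)(first-order change of the flag), which comes from differentiating the linear curvature variation \(P_h+\ip{k_h}{[U,V]}\) in \((D,\dot U,\dot V)\);
\item the pure \(\delta\delta\) term, which is the Hessian of \(\kappa_Q\) at a commuting pair.
\end{itemize}
For the mixed term, the expression \(P_h\) in \eqref{eq:principal-symbol} is the standard second-derivative part of the first variation of curvature. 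The part \(k_h\) collects the lower-order terms, which pair with brackets. Since \([U,V]=0\), the derivative of the bracket-pairing term is \(k_h^T\mathsf B\delta\), which produces \(\kappa^T\delta\).

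For the pure term, on a bi-invariant group \(\kappa_Q(X,Y)=\tfrac14\norm{[X,Y]}^2\). Its second-order term at a commuting pair is therefore \(\tfrac14\norm{\mathsf B\delta}_Q^2\).

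\textbf{Step 2: the A-tensor term.} To first order, \(A_\tau=[\tilde U,\tilde V]^{\mathcal V}\) is computed with vertical projection \(\mathsf V G^{-1}\mathsf V^TQ_\tau\). Its first-order coefficient consists of three pieces:
\begin{itemize}
\item the bracket derivative \(\mathsf B\delta\), projected through \(\mathsf V^T Q\). Since \(\mathsf V^TQX=\) moment, and the \((L+R)\) combination arises from evaluating the projection via \(\mathsf V=L-R\), this gives \(C\mathsf B\delta\) after using horizontality of \(U,V\) and Ad-invariance;
\item the metric contribution \(\mathsf V^T\{(D_Uh)V-(D_Vh)U\}\), which is the \(h\)-part of the Lie-derivative formula \(2A=\) vertical part of \(\nabla_UV-\nabla_VU\) combined with the tensor \(h\);
\item the terms \(\mp(\ad L)^ThV\), which arise because the vertical fields are Killing for \(Q\) but \(h\) is only fibre-invariant.
\end{itemize}
Together these give \(\omega_h+C\mathsf B\delta\). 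Since \(A_0=0\), the squared norm contributes exactly \(\tfrac34\norm{\omega_h+C\mathsf B\delta}_{G^{-1}}^2\) at order \(\tau^2\), the \(G^{-1}\) coming from the vertical Gram matrix.

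We must also check that the horizontal-lift correction, that is, the vertical displacement forced by \eqref{eq:affine-horizontal}, enters only through \(\delta\) constrained by \(\mathsf E\delta+f=0\). It then adds no separate term, because \(\kappa_Q\) and \(A\) have vanishing first derivative in vertical directions at a horizontal commuting pair.

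\textbf{Step 3: the \(\tau^2 j\) term and the normalisation.} Since the numerator and its first variation vanish at \(\tau=0\), dividing by \(\Gamma_\tau=\Gamma+O(\tau)\) affects only the order-\(\tau^2\) coefficient through the leading \(\Gamma\). The quotient Gram determinant equals the source Gram determinant of the horizontal lifts, so the sectional coefficient is \(\Phi_h/\Gamma\). The perturbation \(\tau^2 j\) enters at second order only linearly at the fixed flag, contributing \(L_j\).

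\textbf{Main obstacle.} The hardest part is the bookkeeping in Step 2, together with the mixed term of Step 1. We must verify that the coefficients \(\tfrac32,\tfrac12\) in \(k_h\) and the exact form of \(\omega_h\) match, in particular the sign conventions for \(L,R\), and that \(\dot L_D=-\ad_DL\). We would check this first on a torus-free test case, such as \(E=\) a single group with trivial fibre, where \(\omega_h\) must reduce to zero.
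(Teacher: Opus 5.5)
Your outline follows the same route as the paper's proof: O'Neill's formula on the product source, \(c_h\) as the \(\tau^2\) Christoffel product at the fixed pair, the mixed term as the differential of the first variation taken at non-commuting pairs, \(\tfrac14\norm{\mathsf B\delta}_Q^2\) from the bi-invariant numerator, the square of the first-order vertical bracket, and division by \(\Gamma\). The gap is that you do not carry out the two computations that produce the specific coefficients in the statement, and the mechanisms you indicate for them are not the ones that produce them.

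\textbf{The coefficients in \(k_h\).} You need \(L_h^{\rm src}(A,B)\) for arbitrary left-invariant \(A,B\) with \(\gamma=[A,B]\ne0\). This requires the following steps:
\begin{itemize}
\item pair \((\nabla_A\mathcal C_h)(B,B)-(\nabla_B\mathcal C_h)(A,B)\) with \(A\), using \(\nabla_AX=D_AX+\tfrac12[A,X]\) for non-invariant fields;
\item add the lowering-metric term \(h(R(A,B)B,A)=\tfrac14h(A,[B,\gamma])\);
\item insert the Koszul formula for \(\mathcal C_h\);
\item symmetrize \(D_AD_Bh(A,B)\) using \([D_A,D_B]=D_\gamma\).
\end{itemize}
Only after this do the \(D_\gamma h(A,B)\) terms cancel (\(\tfrac12-\tfrac34+\tfrac14=0\)) and the coefficient of \((D_Ah)(B,\gamma)\) come out as \(1+\tfrac34-\tfrac14=\tfrac32\). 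Calling \(P_h\) ``the standard second-derivative part'' hides exactly this bookkeeping, which is not automatic on a nonabelian group.

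Likewise, you need a reason why \(c_h\) is the entire fixed-pair \(\tau^2\) coefficient. The paper uses normal coordinates for \(Q\), where the second-derivative part of the lowered curvature is linear in the metric and the background Christoffel symbols vanish.

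\textbf{The submersion term.} Projecting \(\mathsf B\delta\) through \(\mathsf V^TQ\) literally gives \((L-R)^TQ\,\mathsf B\delta\). This differs from the correct first-order vertical-bracket covector \(-(L+R)^TQ\,\mathsf B\delta\) by \(2L^TQ\,\mathsf B\delta\). The reason is that \(\mathsf B\delta\) differentiates brackets of left-invariant extensions, and these are not horizontal away from the point.

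The missing idea is the moment form \(\alpha_\tau=\mathsf V^T(Q+\tau h)\):
\begin{itemize}
\item For \(Q_\tau\)-horizontal vectors, the vertical-bracket covector equals \(-d\alpha_\tau(U,V)\), which is tensorial.
\item For left-invariant vectors, \(D_UL=-\ad_UL\), \(D_UR=0\) and Ad-invariance give \(d\alpha_0(U,V)=(L+R)^TQ[U,V]\).
\item The same exterior derivative applied to \(\mathsf V^Th\) gives exactly \(\omega_h\) at a commuting pair.
\end{itemize}
So the \((\ad L)^Th\) terms come from differentiating the point-dependent left generators, not from a Killing-field property. Derivatives of \(G^{-1}\) drop out because they multiply the vanishing background covector.

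\textbf{The lift correction.} This also corrects your remark on the lift correction. In general \(C\mathsf B\delta\) does change when a vertical vector is added to \(\dot U\). No extra term appears not because of a vanishing vertical derivative, but because \(-d\alpha_\tau\) is evaluated on actual \(Q_\tau\)-horizontal lifts, whose velocities satisfy \(\mathsf E\delta+f=0\).

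Finally, your proposed trivial-fibre test cannot detect errors in \(C\) or \(\omega_h\), since both are empty there.
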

\begin{proof}
Koszul's formula, with the background connection
\(\nabla_AB=[A,B]/2\), gives
\eqref{eq:connection-variation}. In normal coordinates for the background
metric, the second-coordinate-derivative part of the lowered curvature
tensor is linear in \(Q+\tau h\). The background Christoffel symbols
vanish at the point. Its fixed-pair quadratic coefficient is therefore
the Christoffel product \eqref{eq:source-constant}.

For arbitrary constant left-invariant vectors \(A,B\), put \(\gamma=[A,B]\).
The curvature-operator variation is
\((\nabla_A\mathcal C_h)(B,B)-(\nabla_B\mathcal C_h)(A,B)\).
Its pairing with \(A\), including the variation of the lowering metric, is
\begin{align*}
 &Q(D_A\mathcal C_h(B,B)-D_B\mathcal C_h(A,B),A)
       -\tfrac12Q(\mathcal C_h(A,B),\gamma)\\
 &\qquad-\tfrac32Q(\mathcal C_h(\gamma,B),A)
       +\tfrac14h(A,[B,\gamma]).
\end{align*}
By \eqref{eq:connection-variation} and \([D_A,D_B]=D_\gamma\),
the first variation of the source curvature numerator is
\begin{align*}
 L_h^{\rm src}(A,B)={}&P_h(A,B)
       +\tfrac32\{(D_Ah)(B,\gamma)-(D_Bh)(A,\gamma)\}\\
       &+\tfrac12\{h(A,[B,\gamma])-h(B,[A,\gamma])\}
          -\tfrac34h(\gamma,\gamma).
\end{align*}
At \((A,B)=(U,V)\), one has \(\gamma=0\) and
\(d\gamma(\delta)=\beta\). The last term has zero differential,
so the full point-and-plane differential is
\(dP_h(\delta)+k_h^T\beta=\kappa^T\delta\).
The bi-invariant background numerator contributes
\(\norm{\beta}_Q^2/4\).

For the submersion term, use the vertical moment forms
\(\alpha_\tau=\mathsf V^T(Q+\tau h)=\alpha_0+\tau\alpha_h\).
For constant left-invariant vectors, \(D_UL=-\ad_UL\) and
\(D_UR=0\). Ad-invariance of \(Q\) therefore gives
\[
 d\alpha_0(U,V)=(L+R)^TQ[U,V]=C[U,V].
\]
For the metric perturbation, the same exterior derivative gives
\begin{align*}
 d\alpha_h(U,V)={}&-(\ad_UL)^ThV+(\ad_VL)^ThU\\
 &+\mathsf V^T\{(D_Uh)V-(D_Vh)U-h[U,V]\}.
\end{align*}
At the commuting pair this is \(\omega_h\). Along a moving pair,
the derivative of \(C[U,V]\) is \(C\beta\), since the derivative
of \(C\) is multiplied by the zero background bracket. Hence
\[
 \left.\frac d{d\tau}d\alpha_\tau(U_\tau,V_\tau)\right|_0
       =\omega_h+C\beta.
\]
For horizontal extensions, \(d\alpha_\tau(U_\tau,V_\tau)\) is
the negative of the vertical-bracket covector. O'Neill's
formula~\cite{O66}, in the normalization also displayed in
\cite[p. 403, Equation (1)]{GM74}, contributes three quarters of its
squared norm. The resulting Taylor coefficient is
\(3\norm{\omega_h+C\beta}_{G^{-1}}^2/4\). Derivatives of the
inverse Gram matrix multiply the zero background covector and do not
contribute at this order. These are all four terms in
\eqref{eq:full-source-quadratic}.
The sectional coefficient is thus \(\Phi_h/\Gamma\), since the
lower-order numerators vanish. The term \(\tau^2j\) adds \(L_j\)
to this coefficient and leaves the first-order horizontal constraint unchanged.
\end{proof}

The expression \(\Phi_h\) is defined on the entire affine space
\(\mathsf E\delta=-f\). Since every actual flag curve supplies a
velocity in this space,
\[
 \inf_{\text{actual flag velocities}} F^{[2]}
 \ge \inf_{\delta:\,\mathsf E\delta=-f}\frac{\Phi_h(\delta)}{\Gamma}
 \qquad\text{for the path }Q+\tau h.
\]
To bound the right-hand side, we express the
linear term as a bracket pairing plus a multiple of the constraint.
A pair \((Z,\lambda)\) realizing this identity is called a
\emph{feasible dual pair}. Completing the two curvature squares will
then remove every occurrence of the velocity from the lower bound.

\begin{lemma}[A feasible dual lower bound]\label{lem:dual}
Suppose
\begin{equation}\label{eq:dual-equations}
 \mathsf B^TQZ+\mathsf E^T\lambda=\kappa.
\end{equation}
Let \(\Pi\) be a \(Q\)-self-adjoint projection satisfying
\(\Pi\mathsf B=\mathsf B\) and \(\Pi Z=Z\). Then, for every
\(\eta\in\R^{15}\) and every feasible velocity \(\delta\),
\begin{equation}\label{eq:dual-lower-bound}
 \Phi_h(\delta)\ge
 c_h-\lambda^Tf-\eta^T\omega_h
       -\norm{Z-\Pi Q^{-1}C^T\eta}_Q^2-\tfrac13\eta^TG\eta.
\end{equation}
\end{lemma}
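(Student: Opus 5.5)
The plan is to split the linear term $\kappa^T\delta$ using the dual equations \eqref{eq:dual-equations}, and then complete the two squares in \eqref{eq:full-source-quadratic} separately, one against the bracket term and one against the O'Neill term. Throughout, put $\beta=\mathsf B\delta$ and fix a feasible velocity, so that $\mathsf E\delta=-f$ by \eqref{eq:affine-horizontal}.

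\emph{Step 1: the linear term.} Transposing \eqref{eq:dual-equations} gives
\[
 \kappa^T\delta=Z^TQ\mathsf B\delta+\lambda^T\mathsf E\delta
 =\ip{Z}{\beta}_Q-\lambda^Tf .
\]
Hence
\[
 \Phi_h(\delta)=c_h-\lambda^Tf+\ip Z\beta_Q+\tfrac14\norm\beta_Q^2
   +\tfrac34\norm{\omega_h+C\beta}_{G^{-1}}^2 .
\]

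\emph{Step 2: the O'Neill square.} Since $G$ is positive definite, for any $x\in\R^{15}$ the quadratic $\tfrac34x^TG^{-1}x+\eta^Tx$ attains its minimum at $x=-\tfrac23G\eta$. Its minimum value is $-\tfrac13\eta^TG\eta$. Applying this with $x=\omega_h+C\beta$ gives
\[
 \tfrac34\norm{\omega_h+C\beta}_{G^{-1}}^2\ge-\eta^T\omega_h-\eta^TC\beta-\tfrac13\eta^TG\eta .
\]

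\emph{Step 3: the bracket square.} This is the only place the projection enters. We write
\[
 \eta^TC\beta=\ip{Q^{-1}C^T\eta}{\beta}_Q .
\]
Because $\Pi\mathsf B=\mathsf B$ we have $\beta=\Pi\beta$. Since $\Pi$ is $Q$-self-adjoint, this pairing equals $\ip{\Pi Q^{-1}C^T\eta}{\beta}_Q$. Set $W=Z-\Pi Q^{-1}C^T\eta$. The remaining $\beta$-dependent terms are then $\tfrac14\norm\beta_Q^2+\ip W\beta_Q$, and completing the square gives
\[
 \tfrac14\norm\beta_Q^2+\ip W\beta_Q=\norm{\tfrac12\beta+W}_Q^2-\norm W_Q^2\ge-\norm W_Q^2 .
\]

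Combining Steps 1--3 yields \eqref{eq:dual-lower-bound}, and every occurrence of $\delta$ has been removed. The hypothesis $\Pi Z=Z$ is not needed for the inequality itself; it only guarantees that $W$ lies in the range of $\Pi$, so that the bound is not weakened by components of $Z$ that $\beta$ can never reach. The only point requiring care is Step 3, namely checking that $\Pi$ may be inserted using $Q$-self-adjointness together with $\beta\in\operatorname{im}\mathsf B$. Everything else is the elementary minimization of a positive-definite quadratic.
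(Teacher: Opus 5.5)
Your proof is correct and follows the paper's argument. The paper also uses \eqref{eq:affine-horizontal} and \eqref{eq:dual-equations} to write $\Phi_h$ as $\tfrac14\norm{\mathsf B\delta+2(Z-\Pi Q^{-1}C^T\eta)}_Q^2+\tfrac34\norm{\omega_h+C\mathsf B\delta+\tfrac23G\eta}_{G^{-1}}^2$ plus the stated constant, which is your two completed squares combined into a single identity. Your remark is also right: neither argument uses $\Pi Z=Z$, and replacing $Z$ by $\Pi Z$ preserves the dual equations because $\mathsf B^TQ\Pi=\mathsf B^T\Pi^TQ=\mathsf B^TQ$.
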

\begin{proof}
Put \(\rho=QZ-\Pi^TC^T\eta\) and
\(\Omega=\omega_h+C\mathsf B\delta\). Using
\eqref{eq:affine-horizontal} and \eqref{eq:dual-equations} gives
\begin{align*}
 \Phi_h={}&\tfrac14\norm{\mathsf B\delta+2Q^{-1}\rho}_Q^2
     +\tfrac34\norm{\Omega+\tfrac23G\eta}_{G^{-1}}^2\\
     &+c_h-\lambda^Tf-\eta^T\omega_h
               -\rho^TQ^{-1}\rho-\tfrac13\eta^TG\eta.
\end{align*}
The two squares are nonnegative, and
\(Q^{-1}\rho=Z-\Pi Q^{-1}C^T\eta\), proving the claim.
\end{proof}

\section{Curvature near the \texorpdfstring{\(\Bfamily\)}{B} family}\label{sec:B-bound}

On \(\ZB\), the mixed curvature variation factors through the bracket
derivative. The two curvature squares then control it by Cauchy--Schwarz.

\begin{proposition}\label{prop:B-bound}
For the \(\varepsilon H_N\) path and every flag in \(\ZB\),
\[
 \calQ_{\Bfamily}(H_N)\ge\frac18-\frac1{54}
      \left(\frac{50}{9}+\frac{27}{8000}\right)>\frac1{48}.
\]
\end{proposition}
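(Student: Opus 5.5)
We apply Proposition~\ref{prop:source-quadratic} and Lemma~\ref{lem:dual} to the path \(Q_E+\varepsilon q_0^*H_N\) at a horizontal commuting lift \((U,V)\) of a \(\Bfamily\) flag. First we check the hypothesis that the first variation vanishes. The factor \(m-\tfrac12\) in \eqref{eq:HN} vanishes on \(\ZB\), because the first horizontal equation in \eqref{eq:B-horizontal} forces \(|a|=|b|=1/\sqrt2\). Hence \(h=q_0^*H_N\) and its tangential derivatives along the zero family vanish at the flag. The only surviving first-order data are the normal derivatives of \(m\), that is, \(h\) enters only through \(dm\otimes P_0NMNP_0\).

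With this, \(c_h=0\), because \(\mathcal C_h(U,\cdot)\) involves only \(h\) and \(D_Uh\), and \(dm(U)=dm(V)=0\) along horizontal \(\Bfamily\) directions, by the same moment computation as in Proposition~\ref{prop:clean-zero}. Likewise \(f=0\), and \(\omega_h=\mathsf V^T\{(D_Uh)V-(D_Vh)U\}\) reduces to zero or to a term controlled by the same vanishing. In \eqref{eq:full-source-quadratic} the linear term \(\kappa=dP_h+\mathsf B^Tk_h\) then comes only from second derivatives of \(m-\tfrac12\) paired with \(P_0NMNP_0\). This is the \emph{mixed curvature variation} mentioned in the section preamble.

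The key structural claim is that on \(\ZB\) this linear term factors through the bracket derivative, \(\kappa^T\delta=\ip{Z}{\mathsf B\delta}_Q\) modulo the constraint, for an explicit \(Z\). This gives a feasible dual pair for \eqref{eq:dual-equations}, with \(\lambda\) absorbing the point and gauge directions. We would construct \(Z\) from the Hessian of \(m\) in the directions \(P(x)+\mathsf H(y)\) and \(Q_-(y)\), using \(|x|=\sqrt3/2\) and \(\ip{\Ad_{a^{-1}}y}{y}=\tfrac12\).

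Rather than discarding the squares, we keep them and optimize directly by Cauchy--Schwarz. The quantity \(\kappa^T\delta\) is split between the two squares \(\tfrac14\norm{\beta}^2\) and \(\tfrac34\norm{\omega_h+C\beta}_{G^{-1}}^2\), and the minimum over \(\beta\) of \(\ip{Z}{\beta}+\tfrac14\norm\beta^2+\dots\) is \(-\norm{Z}^2\) corrected by the fibre term. After dividing by \(\Gamma\) and adding the genuine \(L_j\)-free constant contributions (in the \(B\) case the \(\tfrac18\) comes from the positive part produced by \(dm\) acting on \(N\), i.e.\ \(\tfrac12\) of \(a_z\)), we obtain the bound \(\tfrac18-\tfrac1{54}(\tfrac{50}9+\tfrac{27}{8000})\).

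The constants suggest the following breakdown. The factor \(1/54\) comes from \(\Gamma\) together with the \(G^{-1}\) normalization, where the \(L\)-factor scale \(1/10\) produces \(27/8000\). The term \(50/9\) would be the \(Q\)-norm squared of the projected \(Z-\Pi Q^{-1}C^T\eta\), computed with a good choice of \(\eta\) proportional to \(G^{-1}\) applied to the fibre component. We would evaluate these norms using the bracket structure from Appendix~\ref{app:clean}. Since \(\ZB\) is a homogeneous \(\Sp(1)\)-family up to the two invariants above, the computation reduces to one normalized point, after which the final inequality \(\tfrac18-\tfrac1{54}\cdot\tfrac{44473}{8000}>\tfrac1{48}\) is arithmetic.

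The main obstacle will be proving the factorization \(\kappa\in\operatorname{im}(\mathsf B^TQ)+\operatorname{im}\mathsf E^T\), and computing \(\norm Z\) exactly. For this we would first exploit the homogeneity of \(\ZB\) to fix \(a=b=1/\sqrt2\) (up to \([a^{-1}b,y]=0\)), and then solve the linear equations explicitly there.
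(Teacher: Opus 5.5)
Your proposal has a genuine gap at its central step. The claim \(c_h=0\) is false, and if it were true the proposition would fail. In \eqref{eq:connection-variation} the term \(-D_\cdot h(A,B)\) is the full gradient covector \(W\mapsto D_Wh(A,B)\), not a derivative along \(U\) or \(V\). Write \(h=fC_N\) with \(f=m-\tfrac12\). The facts \(f=D_Uf=D_Vf=0\) on the whole \(\Bfamily\) flat kill \(h\), \(D_Uh\) and \(D_Vh\). They still leave \(2Q\mathcal C_h(A,B)=-C_N(A,B)\,df\) for \(A,B\in\{U,V\}\), so
\[
 c_h=\tfrac14\bigl(C_N(U,V)^2-C_N(U,U)\,C_N(V,V)\bigr)|df|_Q^2 .
\]
Since \(f=0\) and \(\omega_h=0\), the stationary velocity \(\delta=0\) is feasible and gives \(\Phi_h(0)=c_h\). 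With \(c_h=0\) the infimum would therefore be at most zero, and no completion of squares could produce \(\tfrac18\). Your appeal to ``\(\tfrac12\) of \(a_z\)'' contradicts your own claim; in fact \(a_z=2c_h/\Gamma\) is the entire \(\tfrac18\).

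The missing ingredient is the carrier computation of Lemma~\ref{lem:B-flat-carrier}. On the flat the \(\Sp(2)\) tangent fields are \(P(\widetilde x)+2\mathsf H(y)\) and \(Q_-(y)\). One has \(N\mathsf H(y)=0\), \(NP(\widetilde x)=\tfrac12P(\widetilde x)\) and \(NQ_-(y)=Q_-(y)\), and \(M\) acts on the two images by \(12\) and \(-3\). This gives \(C_N(U,U)=9\), \(C_N(V,V)=-3\), \(C_N(U,V)=0\) and \(|df|_Q^2=\tfrac12\), all constant along the flat. The opposite signs, built into \(M\), are exactly why \(c_h=\tfrac{27}{8}>0\) and \(2c_h/54=\tfrac18\). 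Constancy along the flat is also what reduces \(\kappa\) to \(b_{\Bfamily}=\tfrac32\delta(D_U^2f)-\tfrac92\delta(D_V^2f)\).

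The quantitative part is also left undone. You name the factorization of \(\kappa\) through \(\mathsf B\) and the norm computation as the main obstacle but do not carry them out, so \(\tfrac{50}{9}+\tfrac{27}{8000}\) is not derived. Your attribution of the constants is off as well:
\begin{itemize}
\item \(\tfrac1{54}\) is simply \(1/\Gamma\).
\item \(\tfrac{27}{8000}\) is twice the squared \(Q\)-norm of the \(L\) component \(3m_0/20\) of \(\lambda-\mathsf J_f\eta\), with weight \(\tfrac1{10}\).
\item \(\tfrac{50}{9}=2\cdot\tfrac{173}{96}+\tfrac23\cdot\tfrac{281}{96}\) combines both squares.
\item \(\tfrac{50}{9}+\tfrac{27}{8000}\neq\tfrac{44473}{8000}\).
\end{itemize}
The paper proceeds as follows. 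It writes \(b_{\Bfamily}\) in explicit source variables and uses \eqref{eq:B-horizontal-derivative} to eliminate \(e\), \(r\) and the \(G\) bracket term. This gives \(b_{\Bfamily}=\ip\lambda\beta_Q\) with the explicit \(\lambda\) of \eqref{eq:B-dual-vector}, checked as the identity \(\mathbf b_{\Bfamily}-\mathsf B^TQ\lambda=\mathsf E^T\mu\). It then applies Cauchy--Schwarz with an explicit fibre coefficient \(\eta\).

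Your reduction step also needs repair. Taken literally, \(a=b=1/\sqrt2\) is not a \(\Bfamily\) point: it violates \(\ip{\Ad_{a^{-1}}y}{y}=\tfrac12\) for \(|y|=1\). Moreover, the five-dimensional \(\ZB\) is not homogeneous under isometries of the perturbed metric. The paper instead moves to \(g_*\) with \(a_*=(1+i)/2\) by a change of source coordinates. This leaves arbitrary rotations \(S,T\) fixing \(y\) in the \(H_\Delta\) embedding, and the dual identity is verified for all such \(S,T\); that is what makes the bound uniform over \(\ZB\).
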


We use the following fixed quaternions, local to this section:
\begin{equation}\label{eq:B-axes}
 y=i+j,\quad x=\frac{i-j-2k}{2},\quad h_0=y/2,\quad
 k_0=y\times x,\quad A_0=(1+i)/\sqrt2,
\end{equation}
and set
\[
 R=\Ad_{A_0^{-1}},\qquad a_0=y/2-x,\qquad
 v_0=3x+9y/2,\qquad m_0=y\times R^Tx.
\]
Direct quaternion multiplication gives
\begin{equation}\label{eq:B-elementary}
 \begin{gathered}
 |y|^2=2,\quad |x|^2=3/2,\quad x\perp y,\quad
 |k_0|^2=3,\quad |m_0|^2=3/4,\\
 Ry=y/2+x,\qquad Rm_0=k_0/2,\qquad
 k_0\times a_0=v_0/3.
 \end{gathered}
\end{equation}

The \(\Bfamily\) horizontal equations imply
\((a,b)=(A,B)/\sqrt2\) with
\(\Ad_Ay=\Ad_By=\eta\), \(|\eta|^2=2\) and
\(\ip y\eta=1\). Simultaneous conjugation puts
\((y,\eta)=(i+j,i+k)\). Thus
\(A=A_0\exp(\alpha y/\sqrt2)\) and
\(B=A_0\exp(\beta y/\sqrt2)\).
Right translation in the G source factor and conjugation in K fix the
source point at
\[
 g_*=\begin{pmatrix}a_*&a_*\\-a_*&a_*\end{pmatrix},
 \qquad a_*=(1+i)/2.
\]
Only the \(H_\Delta\) embedding in K varies: it has the form
\(q\mapsto(Sq,Tq)\), where \(S,T\in\SO(3)\) fix y. This is a
change of source coordinates with transformed fibre action, not an
assertion of right-K invariance of the perturbed quotient metric.
Write \(K_\theta=\diag(q_1,q_2)\), with \(q_1,q_2\) commuting
with y. The projection to \(\Sp(2)\) is
\(q_\theta(g,k,h_0)=gk^{-1}K_\theta h_0^{-1}\). In the fibre
coordinate order \((\xi_1,\xi_2,\xi_h,\xi_\ell,\xi_g)\), its
full source fibre maps are
\begin{align*}
 \mathsf L_f\xi&=(\Ad_{g_*^{-1}}D(\xi_g,\xi_\ell),
                              (S\xi_h,T\xi_h),\xi_g,0),\\
 \mathsf R_f\xi&=(D(\xi_1,\xi_2),(\xi_1,\xi_2),\xi_h,\xi_\ell).
\end{align*}

The horizontal commuting pair is
\begin{equation}\label{eq:B-source-pair}
 \begin{aligned}
 U_G&=D(h_0,h_0)+P(x),& U_K&=(-h_0,-h_0),&
 U_H&=-y,&U_L&=10R^Ta_0,\\
 V_G&=D(h_0,-h_0),& V_K&=(-h_0,h_0),&
 V_H&=0,&V_L&=0.
 \end{aligned}
\end{equation}
Its Gram determinant is 54 for \eqref{eq:product-metric}. Denote the
sum and difference of these maps by \(\mathsf J_f\) and \(\mathsf V_f\).

Write the pulled-back first tensor as \(H_N=fC_N\), where
\(f=|a|^2-1/2\).
\begin{lemma}\label{lem:B-flat-carrier}
On the entire source flat generated by \eqref{eq:B-source-pair},
\begin{equation}\label{eq:B-carrier}
 \begin{gathered}
 f=D_Uf=D_Vf=0,\qquad |df|_Q^2=1/2,\\
 C_N(U,U)=9,\quad C_N(V,V)=-3,\quad C_N(U,V)=0.
 \end{gathered}
\end{equation}
The three restrictions of \(C_N\) are constant in the parallel
coordinate frame of the flat.
\end{lemma}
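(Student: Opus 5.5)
We exploit that the source flat is the image of \((\xi,\eta)\mapsto p_*\exp(\xi U)\exp(\eta V)\) in the product source \(E\), with \(U,V\) as in \eqref{eq:B-source-pair}. Since \(U\) and \(V\) commute, the left-invariant fields \(U,V\) are parallel along the flat for the bi-invariant metric \eqref{eq:product-metric}. So the parallel coordinate frame is the left-invariant frame, and every claim reduces to evaluating functions of the \(\Sp(2)\) projection \(q_\theta\) along this two-parameter family.

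\textbf{First, the projected point.} We compute the projection \(g(\xi,\eta)=q_\theta(\exp(\xi U)\exp(\eta V))\), with \(g_*\) and \(K_\theta\) as fixed above. The \(L\) component does not enter \(q_0\). The \(G\), \(K\), \(H_\Delta\) components of \(U\) and \(V\) combine in \(gk^{-1}h_0^{-1}\). The \(\mathfrak h\)-parts \(D(h_0,h_0)\) and \((-h_0,-h_0)\), together with \(U_H=-y\), should cancel up to a conjugation by \(\exp(\cdot\,y)\), because \(K_\theta\) commutes with \(y\). The \(\mathfrak q\)-parts of \(V\) should likewise reduce to a left/right conjugation by elements commuting with \(y\). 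The outcome I aim for is that the first row of \(g(\xi,\eta)\) is \((A_\xi,B_\xi)/\sqrt2\) times unit quaternions commuting with \(y\), possibly conjugated. The key structural point is that \(|a|^2=|b|^2=1/2\) stays fixed, exactly as the \(\Bfamily\) horizontal equations \eqref{eq:B-horizontal} force. This gives \(f\equiv0\) on the flat, and therefore \(D_Uf=D_Vf=0\) as tangential derivatives.

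\textbf{Second, the differential of \(f\).} We compute \(df\) at a flat point from \(f=|a|^2-1/2\). The derivative of \(|a|^2\) along a left-trivialized \(X\) in \(\mathfrak{sp}(2)\) is \(2\re(\bar a\,(aX)_{11}+\bar a\,(bX)_{21})\) type expressions, and only the \(\mathfrak p\) component contributes, through \(\re(\bar a b\bar\rho)\)-type terms. With \(|a|=|b|=1/\sqrt2\), the \(Q_G\)-gradient has norm determined by \(|\bar ab|=1/2\). Pulling back along \(q_0\) and taking the \(Q_E^{-1}\) norm then yields \(1/2\). Because \(df\) is pulled back from \(\Sp(2)\), only the \(G\) metric block matters.

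\textbf{Third, the \(C_N\) values.} Here \(C_N=q_0^*(P_0NMNP_0)\). We push \(U,V\) to \(\Sp(2)\) via \(dq_\theta\) and apply \(P_0\). By \(\Phi\), the \(\mathfrak h\)-part is halved and the pushed vectors become the \(\Bfamily\) normal form \(P(x)+\mathsf H(y)\), \(Q_-(y)\) in the weighted variables. We then evaluate \(N\) using \eqref{eq:annihilator}: \(\zeta\) is proportional to \(P(\bar ab)\), and \(N\) acts only through \(O\), \(\zeta\zeta^T\) and \(\ad_\zeta O\ad_\zeta^T\). We compute \(NP_0U\) and \(NP_0V\) explicitly as vectors with \(\mathfrak p\)- and \(\mathfrak h\)/\(\mathfrak q\)-components, and contract with the block matrix \(M\). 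I would check the values \(9,-3,0\) with the fixed quaternions \eqref{eq:B-axes}, using the identities \eqref{eq:B-elementary}, notably \(Ry=y/2+x\) and the norms of \(x,y\).

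\textbf{Constancy, the main obstacle.} For constancy we use the same \(\mathrm{Ad}\)-equivariance as in Lemma~\ref{lem:smooth-metric}. Moving along the flat changes \((a,b)\) and the pushed tangent vectors by a simultaneous conjugation together with a rotation in the \((a,b)\)-plane that commutes with \(P(x)\) and \(\mathsf H(y)\). Both operations preserve \(N\), \(P_0\) and \(M\) up to the equivariant action. This step is subtler than in the \(\Afamily\) case of Proposition~\ref{prop:A-preservation}, because \(N\) does not annihilate these vectors. I would first try to write the flat parametrization in the form \(\psi=g_*\exp(\xi\,\cdot)\exp(\eta\,\cdot)\) times group elements and verify directly that \(\bar a b\) and the pushed vectors transform covariantly. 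If that fails, the fallback is to show that the \(\xi,\eta\)-derivatives of the three contractions vanish by a bracket computation against \(\ad_U\) and \(\ad_V\).
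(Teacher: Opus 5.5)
You follow the paper's route: parametrize the projection of the source flat, read off the first row, push the coordinate fields to \(\Sp(2)\), evaluate \(N\) and contract with \(M\). Two steps fail as you describe them.

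\textbf{The action of \(P_0\).} \(P_0\) is not the \(\Phi\)-weighting. By \eqref{eq:P0} it acts by \(1,\tfrac12,\tfrac14\) on \(\mathfrak p,\mathfrak q,\mathfrak h\). The \(\mathfrak h\)-parts also do not cancel; they add up. The projection is \(g_*\exp(\xi U_G)\exp(\eta D(y,-y))\exp(3\xi\mathsf H(h_0))K_\theta\). So the pushed fields are \(X=P(\widetilde x)+2\mathsf H(y)\) and \(Y=Q_-(y)\), with \(\widetilde x=q_2^{-1}e^{-3\xi y/2}xe^{3\xi y/2}q_1\); this is \(\Phi^{-1}\) of the \(\Bfamily\) normal form. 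Hence \(P_0X=P(\widetilde x)+\tfrac12\mathsf H(y)\) and \(P_0Y=\tfrac12Q_-(y)\), not \(P(x)+\mathsf H(y)\) and \(Q_-(y)\). If you feed \(Q_-(y)\) into \(N\) and \(M\) as proposed, you get \(C_N(V,V)=-3\norm{Q_-(y)}_Q^2=-12\) instead of \(-3\). Your value \(C_N(U,U)=9\) survives only because \(N\) kills \(\mathsf H(y)\).

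\textbf{Constancy.} You treat this as the main obstacle, and the equivariance argument you propose is false as stated. The motion in \(\eta\) is right multiplication by \(\diag(e^{\eta y},e^{-\eta y})\). It is neither a simultaneous conjugation nor a rotation in the \((a,b)\)-plane, since both amplitudes stay \(1/\sqrt2\). Neither \(P_0\) nor \(M\) is equivariant under it: both couple \(d_1,d_2\) by scalar off-diagonal blocks. Moreover, \(M\) weights \(\rho\) and \(\omega\) differently, while \(x\mapsto e^{\eta y}xe^{\eta y}\) mixes real and imaginary parts.

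The observation you are missing comes from the explicit first row \(a=2^{-1/2}e^{\xi y}A_0e^{(\eta+3\xi/2)y}q_1\), \(b=2^{-1/2}e^{\xi y}A_0e^{(-\eta+3\xi/2)y}q_2\). It gives \(\bar ab=\tfrac12q_1^{-1}e^{-2\eta y}q_2\in\spanof_\R\{1,y\}\) at every point of the flat. With \(m=\tfrac12\), \eqref{eq:annihilator} then gives \(N\mathsf H(y)=0\), \(NP(\widetilde x)=\tfrac12P(\widetilde x)\) and \(NQ_-(y)=Q_-(y)\) for every imaginary \(\widetilde x\perp y\). So \(NP_0X=P(\widetilde x)/2\) and \(NP_0Y=Q_-(y)/2\) are orthogonal, with squared norms \(3/4\) and \(1\), and \(M\) acts on them by \(12\) and \(-3\). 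Since this is evaluated at an arbitrary point of the flat, constancy is automatic.

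\textbf{Minor point on \(|df|_Q^2\).} The \(K\) and \(H_\Delta\) components of \(d(f\circ q_\theta)\) vanish because \(m\) is invariant under right multiplication by diagonal matrices. Being pulled back to \(\Sp(2)\) through \(q_\theta\) does not by itself give this.
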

\begin{proof}
Put \(h_0=y/2\). The projection of the flat to \(\Sp(2)\) is
\[
 \psi_{\Bfamily}(\xi,\eta)=g_*\exp(\xi U_G)\exp(\eta D(y,-y))
                              \exp(3\xi\mathsf H(h_0))K_\theta.
\]
Indeed \(U_G\) commutes with \(V_G\), and the K and \(H_\Delta\)
factors in \eqref{eq:B-source-pair} give the remaining exponentials.
Since \(U_G^2=-2I\) and \(A_0(h_0+x)=yA_0\), its first row is
\[
 \begin{split}
 a(\xi,\eta)&=2^{-1/2}e^{\xi y}A_0e^{(\eta+3\xi/2)y}q_1,\\
 b(\xi,\eta)&=2^{-1/2}e^{\xi y}A_0e^{(-\eta+3\xi/2)y}q_2.
 \end{split}
\]
Both norms are \(1/\sqrt2\), so f vanishes on the entire flat.
The K, \(H_\Delta\), and L coordinates leave \(m\) unchanged.
In the G factor, differentiating the first row gives
\(|dm|_{Q_G}^2=2m(1-m)\); hence \(|df|_Q^2=1/2\) there.

The left-trivialized coordinate tangent fields of \(\psi_{\Bfamily}\) have
the form
\[
 P(\widetilde x)+2\mathsf H(y),\qquad Q_-(y),\qquad
 \widetilde x=q_2^{-1}e^{-3\xi y/2}xe^{3\xi y/2}q_1.
\]
Here \(\widetilde x\) is imaginary, perpendicular to y, and has
squared norm \(3/2\). Also
\(\bar a b=\tfrac12q_1^{-1}e^{-2\eta y}q_2\) lies in
\(\spanof_\R\{1,y\}\). Substitution in \eqref{eq:annihilator}
therefore gives
\[
 N\mathsf H(y)=0,\qquad NP(\widetilde x)=\tfrac12P(\widetilde x),
 \qquad NQ_-(y)=Q_-(y).
\]
The images under \(NP_0\) of the two tangent fields are consequently
\(P(\widetilde x)/2\) and \(Q_-(y)/2\). The operator \(M\) acts by
12 on the first and by \(-3\) on the second. Their squared Q norms
are \(3/4\) and 1, and they are orthogonal. This proves the three
constant restrictions in \eqref{eq:B-carrier}.
\end{proof}

\begin{proof}[Proof of Proposition~\ref{prop:B-bound}]
By Lemma~\ref{lem:B-flat-carrier}, the entire
source tensor \(H_N\), and its derivatives along U and V, vanish at
every \(\Bfamily\) flag. Its restriction to the flat is identically zero, so its
first curvature variation vanishes. Its affine horizontal forcing and \(\omega_{H_N}\)
therefore vanish. The fixed sectional second derivative is
\((27/4)/54=1/8\), by \eqref{eq:source-constant} and
\eqref{eq:B-carrier}.

Let \(\beta=[\dot U,V]+[U,\dot V]\). The background numerator
Hessian on source velocities is
\begin{equation}\label{eq:B-Hessian}
 \mathcal H_{\Bfamily}(\delta,\delta)
 =\tfrac12\norm\beta_Q^2+
          \tfrac32\norm{G^{-1/2}\mathsf J_f^TQ\beta}^2.
\end{equation}
The mixed derivative of the curvature numerator is
\begin{equation}\label{eq:B-scalar-source}
 b_{\Bfamily}(\delta)=\tfrac32\delta(D_U^2f)-\tfrac92\delta(D_V^2f).
\end{equation}
Indeed the principal expression \eqref{eq:principal-symbol}, with
\eqref{eq:B-carrier}, has exactly this derivative. All remaining
linear bracket terms vanish because \(H_N=D_UH_N=D_VH_N=0\).

We next express \eqref{eq:B-scalar-source} through \(\beta\).
Retain arbitrary source variables
\[
 D=D(d_1,d_2)+P(z),\quad
 \dot U_G=D(a_1,a_2)+P(u),\quad
 \dot V_G=D(c_1,c_2)+P(v),
\]
and put \(e=d_1-d_2\), \(r=\re z\), \(C_0=c_1+c_2\),
\(q=\im v\). At \(g_*\), for
\(W=D(r_1,r_2)+P(w)\), direct differentiation of
\(f(g)=\tfrac12(g\diag(1,-1)g^*)_{11}\) gives
\(D_W^2f=\ip{r_1-r_2}{\im w}\).
Differentiating once in D and adding the variations of U,V yields
\begin{equation}\label{eq:B-four-source}
 b_{\Bfamily}=-9r+\tfrac32\ip e{k_0}
           +\tfrac32\ip{a_1-a_2}x-\tfrac92\ip yq.
\end{equation}
Use the identity
\[
 [W,[W,\diag(1,-1)]]=
 \begin{pmatrix}
 -4|w|^2&\overline{2(r_2w-wr_1)}\\
 2(r_2w-wr_1)&4|w|^2
 \end{pmatrix}
\]
and commute once more with D. Its U and V evaluations give
\(\delta(D_U^2f)=-6r+\ip e{k_0}+\ip{a_1-a_2}x\) and
\(\delta(D_V^2f)=\ip yq\), respectively.

Define
\(n_0=(C_0+e\times y)/2-q-r y\) and
\(s_0=S^Tc_1+T^Tc_2\). The differentiated horizontal equations give
\begin{equation}\label{eq:B-horizontal-derivative}
 \begin{gathered}
 \dot U_K=(-a_1,-a_2),\quad\dot V_K=(-c_1,-c_2),\quad
 \dot V_H=-s_0,\quad\dot V_L=10R^Tn_0,\\
 Rs_0=n_0+2q+2r y,\qquad
 \ip{s_0}y=\ip{C_0}y.
 \end{gathered}
\end{equation}
To check the point-dependent terms, the lower and upper left generators
in G at \(g_*\) are
\[
 W(\xi)=D(R\xi/2,R\xi/2)+P(-R\xi/2),\qquad
 A(\xi)=D(R\xi/2,R\xi/2)+P(R\xi/2).
\]
Their derivatives are \(-[D,W]\) and \(-[D,A]\). Pairing with V
adds the moments of
\([D,V_G]=D(d_1\times y,-d_2\times y)
 +P(-\ip{\im z}y+r y)\), which are exactly the
\(e\times y/2\mp r y\) terms in
\eqref{eq:B-horizontal-derivative}. The last scalar identity uses
only \(Sy=Ty=y\).

Write \(\beta_{G,o}\) for the P component of \(\beta_G\) and
\(\beta_{K,\Sigma}=\beta_{K_1}+\beta_{K_2}\). Direct brackets give
\begin{align*}
 \beta_{K,\Sigma}&=(a_1-a_2-C_0)\times y,&
 \im\beta_{G,o}&=(\re u)y+x\times C_0+y\times q,\\
 \beta_H&=2y\times s_0,&
 \beta_L&=200R^T(a_0\times n_0).
\end{align*}
Eliminating e and r in \eqref{eq:B-four-source} gives
\[
 b_{\Bfamily}=\ip{v_0}{n_0}-\tfrac34\ip{k_0}{\beta_{K,\Sigma}}
                         +\tfrac32\ip{k_0}{\im\beta_{G,o}}.
\]
The last G term can also be eliminated. Taking scalar products in
\eqref{eq:B-horizontal-derivative} gives
\[
 \ip{C_0}y=4\ip qx-2\ip{a_0}{n_0},\qquad
 \ip{k_0}{\im\beta_{G,o}}=
                  \ip{m_0}{\beta_H}-\tfrac13\ip{v_0}{n_0}.
\]
Using \(Rm_0=k_0/2\) and \(k_0\times a_0=v_0/3\) in the
last expression for \(\beta_L\), we conclude that
\begin{equation}\label{eq:B-dual-vector}
 b_{\Bfamily}=\ip\lambda\beta_Q,\qquad
 \lambda_G=0,\quad
 \lambda_K=(-3k_0/4,-3k_0/4),\quad
 \lambda_H=3m_0/2,\quad\lambda_L=3m_0/20.
\end{equation}
The L coefficient includes its metric weight \(1/10\).

The same elimination can be checked before imposing the constraints.
Let \(\mathbf b_{\Bfamily}\in\R^{54}\) be the covector in
\eqref{eq:B-four-source}, and form \(\mathsf E,\mathsf B\) from
\(\mathsf L_f,\mathsf R_f\) as in Section~\ref{sec:source-variation}.
Then
\[
 \mathbf b_{\Bfamily}-\mathsf B^TQ\lambda=\mathsf E^T\mu,
\]
where the two halves of the constant multiplier, in the fibre order above,
are
\[
 \begin{split}
 \mu_U&=(-3x/2,\,3x/2,\,0,\,0,\,0),\\
 \mu_V&=(3x/2,\,3x/2,\,0,\,(3,3/2,3/2),\,(-3/2,3/2,-3)).
 \end{split}
\]
Each triple is written in the \((i,j,k)\) basis, and each \(0\)
denotes a zero three-vector.
This identity holds for arbitrary rotations \(S,T\) fixing y and
immediately gives \eqref{eq:B-dual-vector} on \(\ker\mathsf E\).

For every admissible source velocity \(\delta\in\ker\mathsf E\)
and every fibre coefficient \(\eta\), Cauchy--Schwarz in the two
summands of \eqref{eq:B-Hessian} gives
\begin{equation}\label{eq:B-Cauchy}
 b_{\Bfamily}(\delta)^2\le C_\eta\mathcal H_{\Bfamily}(\delta,\delta),\qquad
 C_\eta=2\norm{\lambda-\mathsf J_f\eta}_Q^2
                         +\tfrac23\norm{\mathsf V_f\eta}_Q^2.
\end{equation}
This follows by writing
\(b_{\Bfamily}=\ip{\lambda-\mathsf J_f\eta}{\beta}_Q
 +\ip{G^{1/2}\eta}{G^{-1/2}\mathsf J_f^TQ\beta}\).
Choose the two K coefficients of \(\eta\) to be
\((-k_0/3,-k_0/3)\), its GM left coefficient to be \(3m_0/4\),
and all other coefficients to be zero. The two product vectors are
\[
\begin{array}{c|cc}
 &\lambda-\mathsf J_f\eta&\mathsf V_f\eta\\ \hline
G&D(7k_0/48,7k_0/48)+P(-3k_0/16)
 &D(25k_0/48,25k_0/48)+P(3k_0/16)\\
K&(-5k_0/12,-5k_0/12)&(k_0/3,k_0/3)\\
H&3m_0/4&3m_0/4\\
L&3m_0/20&0
\end{array}
\]
Using \eqref{eq:B-elementary}, the off-diagonal weight two and the
L weight \(1/10\), their squared norms are
\[
 \norm{\lambda-\mathsf J_f\eta}_Q^2=\frac{173}{96}+\frac{27}{16000},
 \qquad \norm{\mathsf V_f\eta}_Q^2=\frac{281}{96}.
\]
Thus \(C_\eta=50/9+27/8000<45/8\). For every
\(\delta\in\ker\mathsf E\), \eqref{eq:B-Cauchy} gives
\[
 \begin{split}
 \mathcal H_{\Bfamily}(\delta,\delta)+2b_{\Bfamily}(\delta)
 &\ge\bigl(\sqrt{\mathcal H_{\Bfamily}(\delta,\delta)}-\sqrt{C_\eta}\bigr)^2-C_\eta\\
 &\ge-C_\eta.
 \end{split}
\]
The complete sectional second variation is therefore bounded below by
\[
 \frac1{54}\{\tfrac{27}{4}+2b_{\Bfamily}+\mathcal H_{\Bfamily}\}
 \ge\frac18-\frac{C_\eta}{54}>\frac1{48}.
\]
The rotations \(S,T\) were arbitrary rotations fixing \(y\), so the estimate
includes the real branch and every angular endpoint.
\end{proof}

\section{Curvature near the \texorpdfstring{\(\Afamily\)}{A} family}\label{sec:A-source}

The \(\Afamily\) calculation uses the rational tensor \(h=H/\sqrt2\).
All source quantities, including the horizontal forcing \(f\), use the
notation of Section~\ref{sec:source-variation} for this tensor.
Set
\begin{equation}\label{eq:A-fixed-quaternions}
 y=i+j,\quad z=-i+k,\quad a_*=y+z=j+k,
 \qquad A_\circ=(1+i+j+k)/2,\quad B_\circ=(1-i+j+k)/2.
\end{equation}
Thus \(|y|^2=|z|^2=|a_*|^2=2\), and
\(\Ad_{A_\circ}y=\Ad_{B_\circ}z=a_*\). For a quaternionic axis
\(e\) of squared length two put
\(q_e(u)=(1-2u^2+2ue)/(1+2u^2)\). Complete \(\Afamily\) flags, modulo the
isometric actions, are represented by
\begin{equation}\label{eq:A-parameters}
 A=A_\circ q_y(u),\quad B=B_\circ q_z(v),\quad
 g=\begin{pmatrix}rA&sB\\-sA&rB\end{pmatrix},\qquad
 r^2+s^2=1,\quad m=r^2.
\end{equation}
Write \(\theta=(u,v)\); both parameters range over the compactified real line.
To see completeness,
the first equation of \eqref{eq:A-horizontal} gives the off-diagonal
line \(x=\bar BA\) up to sign when both first-row entries are nonzero.
The second gives \(\Ad_Ay=\Ad_Bz=y+z\), and hence
\(\ip yz=-1\) in this normalization. Simultaneous conjugation fixes
the ordered pair \((y,z)\) as in \eqref{eq:A-fixed-quaternions}; the
remaining freedoms are exactly the two phase circles. At a row pole,
the off-diagonal plane line determines the unit quaternion of the
vanishing row entry. Thus its phase remains part of the flag data and
is not discarded.

Use the source point and phase matrix
\[
 g_{r,s}=\begin{pmatrix}rA_\circ&sB_\circ\\-sA_\circ&rB_\circ\end{pmatrix},
 \qquad K_\theta=\diag(q_y(u),q_z(v)).
\]
The physical point is \(g_{r,s} K_\theta\), and the projection from
the first three source factors is
\(q_\theta(g,k,h_0)=gk^{-1}K_\theta h_0^{-1}\).
Its left-trivialized differential is
\begin{equation}\label{eq:A-push}
 T(W)=\Ad_{K_\theta^{-1}}(W_G-\diag(W_{K_1},W_{K_2}))
                                  -\diag(W_H,W_H).
\end{equation}
The fixed source pair is
\begin{equation}\label{eq:A-source-pair}
 \begin{split}
 U&=(P(\bar B_\circ A_\circ),0,0,0),\\
 V&=(D(y,z)/2,(-y/2,-z/2),-a_*/2,5a_*).
 \end{split}
\end{equation}
It is horizontal and commuting, and its Gram determinant is \(15\).
\begin{lemma}\label{lem:A-first-null}
The first sectional curvature variation of \(H\) vanishes on the full \(\Afamily\)
family, including both row poles.
\end{lemma}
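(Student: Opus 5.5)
We work on the product source with the rational tensor \(h=H/\sqrt2\) and reduce the lemma to an explicit identity along the normal form \eqref{eq:A-parameters}. Since every \(\Afamily\) plane lies in a totally geodesic flat with a horizontal commuting source lift (Proposition~\ref{prop:clean-zero}), the first variation of the quotient curvature numerator has two parts. One is the source term \(L_h^{\rm src}(U,V)\) computed in the proof of Proposition~\ref{prop:source-quadratic}. The other is the O'Neill term, whose first variation is \(\tfrac32\ip{C[U,V]}{\omega_h}_{G^{-1}}\) and vanishes because \([U,V]=0\). At a commuting pair \(\gamma=0\), so the first variation reduces to \(P_h(U,V)\) in \eqref{eq:principal-symbol}. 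We also need the variation of the horizontal lift, which enters through \(dF_0\cdot\dot\sigma\); this vanishes because \(dF_0=0\) at the critical flag (Lemma~\ref{lem:preserved-flat}). Dividing by \(\Gamma=15\) does not affect vanishing. The claim is therefore
\[
 P_h=\tfrac12(D_UD_V+D_VD_U)h(U,V)-\tfrac12D_U^2h(V,V)-\tfrac12D_V^2h(U,U)=0
\]
at every source pair \eqref{eq:A-source-pair} with parameters \((r,s,u,v)\).

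To evaluate \(P_h\), pull back by the push-forward \eqref{eq:A-push}. The \(\Sp(2)\) images of \(U\) and \(V\) are \(\Ad_{K_\theta^{-1}}P(\bar B_\circ A_\circ)\) and \(\Ad_{K_\theta^{-1}}\Phi^{-1}D(y,z)\) up to the \(\mathsf H(a_*/2)\) shift, which lies in \(\mathfrak h\). The \(L\) component drops out since \(h\) is pulled back by \(q_0\). The coefficients of \(H\) in \eqref{eq:H} are affine in \(a_0,b_0,\mathbf b\), so \(P_h\) only requires second derivatives of the first-row functions \(a,b\) along left-invariant directions. These come from \(\psi(\xi,\eta)\) exactly as in the proof of Proposition~\ref{prop:A-preservation}, where the first row is \((r_\xi A_\eta,s_\xi B_\eta)\). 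The identity \(D_UD_V=D_VD_U\) on functions holds because the fields commute, so the symmetrized mixed term is simply \(\partial_\xi\partial_\eta\) of \(h(U,V)\) along the flat. Similarly \(D_U^2h(V,V)\) and \(D_V^2h(U,U)\) are \(\partial_\xi^2\) and \(\partial_\eta^2\) of the coefficient functions, with the tangent vectors held left-invariantly fixed. Note that \(h(V,V)\) involves only the \(d_1,d_2\) blocks, namely the \(H_{d_2d_2}=5\sqrt2a_0I\) term paired with \(z\). Also \(h(U,U)\) vanishes identically, since \(H\) has no \(\rho\rho\) or \(\omega\omega\) block. The quantity \(h(U,V)\) collects the \(d_1\omega,d_2\omega,d_2\rho\) blocks.

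The computation then reduces to a trigonometric–rational expression in \(\xi\)-derivatives of \(r_\xi,s_\xi\) (giving factors \(-r,-s\)) and \(\eta\)-derivatives of \(A_\eta,B_\eta\) via \(q_\eta=\exp(\eta w/2)\). At \(\xi=\eta=0\) this becomes a polynomial in \(r,s\) and in the rational functions of \(u,v\) coming from \(q_y(u),q_z(v)\). We must show that this polynomial is identically zero. Its \(r,s\) dependence is \(rs\) times a function of \((u,v)\), or \(r\) and \(s\) separately, so vanishing reduces to a few identities in \((u,v)\). We clear the denominators \((1+2u^2)(1+2v^2)\) and verify that the numerator polynomial is zero coefficient by coefficient. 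This is exactly the mechanism by which the constants \(-\tfrac{13}{15},\tfrac{107}{20},-4\) were presumably tuned. The row poles \(r=0\) or \(s=0\), and the points \(u,v=\infty\), are covered by continuity: the identity is polynomial in \((r,s)\) on the whole circle and the compactified parameters enter smoothly.

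The main obstacle is the bookkeeping in this last identity. The rotating frame \(\Ad_{q_\eta^{-1}}\) changes \(x_\eta,y_\eta,z_\eta\) while the blocks \([\mathbf b]_\times\) and \(\mathbf b\) rotate with \(B_\eta\). The rational-frame sign convention \eqref{eq:frame-conversion} must be applied consistently, since for instance the \(d_2\rho\) block becomes \(+4\mathbf b\). I would first check the identity at \(u=v=0\) and then expand in \(u,v\) to confirm that the cancellation is structural and not coincidental. If direct cancellation fails, the fallback is to exhibit \(P_h\) as a \(\xi\)- or \(\eta\)-derivative of a quantity constant on the flat, using equivariance under the isometric \(\diag(1,q)\) action and the GM action.
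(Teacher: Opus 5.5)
Your reduction is the one the paper uses. At a horizontal commuting pair the first variation equals \(P_h(U,V)/15\), which on the source flat is exactly the intrinsic expression \eqref{eq:J-intrinsic}, and \(h_{11}\equiv0\) as you note. (The variation of the horizontal lift drops out because the background numerator and the O'Neill term are quadratic in the bracket \([U,V]\), which vanishes at the commuting pair. It is not an instance of \(dF_0=0\), since the quotient flag is held fixed.) The lemma, however, is precisely the assertion that the two surviving terms cancel, and you never show this. You defer the cancellation to a coefficient check you do not perform, and you even provide a fallback in case it fails. That is the gap.

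Worse, the one concrete ingredient you state would make the check fail. The image of \(V\) is \(\Phi^{-1}D(y,z)=D(y,z)+\mathsf H(a_*/2)\), and the \(\mathfrak h\)-shift cannot be discarded: it makes the diagonal components \(y_1=y+a_*/2\) and \(z_1=z+a_*/2\), with \(|z_1|^2=7/2\). So \(h_{22}=\tfrac{35}{2}\re a\), not the \(10\re a\) you get by pairing \(H_{d_2d_2}\) with \(z\). Then \(-\tfrac12\partial_\xi^2h_{22}=\tfrac{35}{4}\re a\). Dropping the shift also changes \(h_{12}\), and the resulting expression does not vanish in general.

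What is missing is the evaluation of the mixed term. The paper uses equivariance under \(q_\eta=e^{\eta a_*/2}\) to write \(h_{12}(\xi,\eta)=s_\xi\,\ell(e^{3\eta a_*/2}B)\), where \(\ell\) collects the \(d_1\omega,d_2\omega,d_2\rho\) blocks against \(y_1,z_1\). It then uses the \(\Afamily\) relations: \(xy=zx\) gives \(\omega=\alpha a_*+\rho n\), and \(Bz=a_*B\) gives \(\ip{\im B}y=0\) and \(\ip{\im B}n=-\re B\). Hence \(\ell(a_*B)=\tfrac{35}{6}\re(Bx)=\tfrac{35}{6}\re A\), and so \(\partial_\xi\partial_\eta h_{12}=-\tfrac{3r}{2}\ell(a_*B)=-\tfrac{35}{4}\re a\), which cancels exactly. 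Nothing in this computation divides by \(r\) or \(s\), so the poles are covered directly. Your continuity argument would also suffice once the identity is actually established.
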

\begin{proof}
Restrict \(h=H/\sqrt2\) to the full source flat and denote its
coordinate entries by \(h_{11},h_{12},h_{22}\). For the projection
\(\psi(\xi,\eta)\) in Proposition~\ref{prop:A-preservation}, the
blocks in \eqref{eq:H} give
\[
 h_{11}=0,\qquad h_{22}=\tfrac{35}{2}\re a(\xi,\eta).
\]
The coordinate derivatives at the origin are
\[
 \partial_\xi^2h_{22}=-\tfrac{35}{2}\re a,
 \qquad \partial_\xi\partial_\eta h_{12}=-\tfrac{35}{4}\re a.
\]
Appendix~\ref{app:mass-encoding} computes the mixed derivative using
the horizontal \(\Afamily\) identities, without division by a row amplitude.
Substitution in \eqref{eq:J-intrinsic} gives \(L_H=\sqrt2L_h=0\).
\end{proof}

Let \(S_A=\Ad_{q_y(u)}\) and \(S_B=\Ad_{q_z(v)}\). For fibre
coordinates \((\xi_1,\xi_2,\xi_h,\xi_\ell,\xi_g)\), the left and
right maps used in \eqref{eq:fibre-data} are explicitly
\begin{align}
 L\xi&=(\Ad_{g_{r,s}^{-1}}D(\xi_g,\xi_\ell),
                          (S_A\xi_h,S_B\xi_h),\xi_g,0),\label{eq:A-LR}\\
 R\xi&=(D(\xi_1,\xi_2),(\xi_1,\xi_2),\xi_h,\xi_\ell).\nonumber
\end{align}
Since the K and \(H_\Delta\) variables occur in \eqref{eq:A-push}, they also
contribute to the derivatives of the pulled-back tensors.

The projection in Lemma~\ref{lem:dual} can be fixed explicitly. On the
G factor put
\[
 \Pi_U=-\ad_{U_G}^2/4,\qquad
 \Pi_V=-\ad_{V_G}^2/2,\qquad
 \Pi_G=\Pi_U+\Pi_V-\Pi_U\Pi_V.
\]
The other four blocks are
\(I-yy^T/2,I-zz^T/2,I-a_*a_*^T/2,I-a_*a_*^T/2\).
The two G projections commute. These formulas give
\(\Pi^2=\Pi,\ \Pi^TQ=Q\Pi,\ \Pi\mathsf B=\mathsf B\).

Appendix~\ref{app:explicit-dual} specifies one source solution
\((Z^0,\lambda^0)\) and five homogeneous columns
\((K_Z,K_\lambda)\), all with constant scalar divisors. They satisfy
\begin{equation}\label{eq:A-dual-checks}
 \begin{gathered}
 \mathsf B^TQZ^0+\mathsf E^T\lambda^0=\kappa,
 \qquad \mathsf B^TQK_Z+\mathsf E^TK_\lambda=0,\\
 \Pi Z^0=Z^0,\qquad \Pi K_Z=K_Z.
 \end{gathered}
\end{equation}
Write \((\xi,\eta)=\mathcal R x\), where the constant coordinate
map \(\mathcal R:\R^{20}\to\R^5\oplus\R^{15}\) is given in
that appendix. The choices
\[
 Z=Z^0+K_Z\xi,\qquad \lambda=\lambda^0+K_\lambda\xi
\]
satisfy the dual equations for every \(x\). Substituting them in
\eqref{eq:dual-lower-bound} gives a quadratic lower bound in \(x\).
Its matrix, linear term, and constant terms are
\begin{align}
 \mathsf A&=\mathcal R^T
 \begin{pmatrix}
 K_Z^TQK_Z&-K_Z^TC^T\\
 -CK_Z&C\Pi Q^{-1}C^T+G/3
 \end{pmatrix}\mathcal R,\label{eq:A-model}\\
 d&=\mathcal R^T\binom{-K_Z^TQZ^0-K_\lambda^Tf/2}
                              {CZ^0-\omega_h/2},\nonumber\\
 b_R&=c_h-(\lambda^0)^Tf-\norm{Z^0}_Q^2
                                      +\tfrac{15}{2}L_{J_r},\nonumber\\
 b_I&=\tfrac{15}{2}L_{J_i}.\nonumber
\end{align}
Here \(L_J\) denotes the first sectional curvature variation of \(J\).
It is computed intrinsically on the flat: the linearized Gauss term
vanishes because the background flat is totally geodesic.

\begin{proposition}\label{prop:A-mass}
For the full \(\Afamily\) family, the source identities \eqref{eq:A-dual-checks}
hold and the quantities
in \eqref{eq:A-model} have the form
\begin{equation}\label{eq:mass-model}
 \mathsf A=\mathsf A_0+m\mathsf A_1,
 \quad d=s(d_0+md_1),
 \quad b_\bullet=b_{\bullet,0}+mb_{\bullet,1}+m^2b_{\bullet,2},
\end{equation}
where \(\bullet\in\{R,I\}\), all coefficients depend only on
\(u,v\), and \(\mathsf A_1\)
is constant. The linear term \(d\) vanishes on the full \(s=0\) pole family.
\end{proposition}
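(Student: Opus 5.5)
The plan is to make every ingredient of \eqref{eq:A-model} an explicit function of the representatives in \eqref{eq:A-parameters}. The only dependence on \(r,s\) should then enter through \(g_{r,s}\), and thereby through the polynomial entries of \(H\) and \(J\) in the first-row data \((rA,sB)\).

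\textbf{Parametrization and linearity in \(r,s\).} I would first write \(g_{r,s}=r\,\mathcal E_1+s\,\mathcal E_2\), with constant quaternionic matrices built from \(A_\circ,B_\circ\). Then \(\Ad_{g_{r,s}^{-1}}\) in \eqref{eq:A-LR} is a quadratic polynomial in \((r,s)\). Using \(r^2+s^2=1\), it takes the form \(\mathcal M_0+m\mathcal M_1+rs\,\mathcal M_2\). The rotations \(S_A,S_B\) and \(\Ad_{K_\theta}\) in \eqref{eq:A-push} depend only on \(\theta\), and the pair \eqref{eq:A-source-pair} is constant. Hence \(L,R,\mathsf V,G,C\), \(\mathsf E\) and \(\mathsf B\) all have entries of this restricted form.

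\textbf{Tensor data.} Next I would substitute \(a=rA\), \(b=sB\) into \eqref{eq:H}--\eqref{eq:J}. In \(H\) every block is linear in one entry: \(a_0\) carries a factor \(r\), and \(b_0,\mathbf b\) carry a factor \(s\). Its left-frame derivatives \(D_W\) are computed by differentiating the first row, which mixes \(r\) and \(s\). Consequently \(f\), \(\omega_h\), \(\kappa\) and \(c_h\) have definite parity in \((r,s)\). For \(J\), the coefficient \(96m+3m^2\) and the quadratic terms in \(a,b\) produce at most \(m^2\). I would check the parity bookkeeping block by block; it should show that every odd-in-\(s\) quantity carries one overall factor of \(s\).

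\textbf{Verifying the dual identities.} With the explicit \((Z^0,\lambda^0,K_Z,K_\lambda)\) of Appendix~\ref{app:explicit-dual}, the identities \eqref{eq:A-dual-checks} become polynomial identities in \((r,s,u,v)\) modulo \(r^2+s^2=1\). Each side has bounded degree, so I would verify them coefficientwise after clearing the denominators \(1+2u^2\) and \(1+2v^2\), using exact rational arithmetic. The conditions \(\Pi Z^0=Z^0\) and \(\Pi K_Z=K_Z\) are pure linear algebra, since \(\Pi\) is constant. To cover \(u,v=\infty\), I would repeat the check in the inverted charts \(u\mapsto 1/u\); the identities \(q_e(1/u)=-q_e(u)\bar{\ }\)-type relations should make this routine.

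\textbf{The claimed structure of \(\mathsf A\), \(d\) and \(b_\bullet\).}
\begin{itemize}
\item For \(\mathsf A\): the blocks \(K_Z^TQK_Z\), \(CK_Z\), \(C\Pi Q^{-1}C^T\) and \(G\) must combine so that the \(rs\) terms cancel. The \(m\)-coefficient must in addition be \(\theta\)-independent.
\item For \(d\): the expressions \(CZ^0-\omega_h/2\) and \(-K_Z^TQZ^0-K_\lambda^Tf/2\) must be odd under \(s\mapsto-s\). This follows from the parity of \(H\), once the dual data are chosen with matching parity.
\item For \(b_R\) and \(b_I\): these are even, so they are polynomial in \(m\) of degree at most two. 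The bound on the degree comes from \(J\) and from the quadratic \(c_h\) and \(\norm{Z^0}^2\).
\end{itemize}
Vanishing of \(d\) at \(s=0\) is then immediate from the factor \(s\).

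\textbf{Main obstacle.} I expect two points to be hardest. The first is showing that \(\mathsf A_1\) is constant, together with the cancellation of the \(rs\) cross terms, because \(C\Pi Q^{-1}C^T\) involves \(\Ad_{g_{r,s}^{-1}}\) on both sides. For this I would first try to use the \(\Ad\)-invariance of \(Q\) to move the \(\theta\)-rotations onto constant projections. The second is treating the pole \(s=0\) without division by \(s\). There I would rely on Appendix~\ref{app:mass-encoding}, and check that no step uses \(x=\bar BA\) through a normalization by \(|b|\).
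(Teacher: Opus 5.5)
Your treatment of \eqref{eq:A-dual-checks} matches the paper's. You reduce them to exact polynomial identities in \((r,s,u,v)\) modulo \(r^2+s^2=1\), with denominators cleared and no division by \(s\), since the divisors in Appendix~\ref{app:explicit-dual} are constant. You do need a normal form such as \(s^2\mapsto1-r^2\) before comparing coefficients. The paper avoids this by substituting \(r=(1-w^2)/(1+w^2)\), \(s=2w/(1+w^2)\) and working over \(\mathbb Q(u,v,w)\). Rational identities extend to \(u,v=\infty\) by continuity, so no extra chart is needed. The correct relation there is \(q_e(-1/(2u))=-q_e(u)\), not a \(1/u\) relation.

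The gap is the structural half, \eqref{eq:mass-model}. You state it as requirements and list it as your main obstacle rather than prove it, and the parity mechanism you offer cannot supply it.

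\textbf{What parity gives.} In fixed source coordinates the only clean symmetry is \(g\mapsto-g\), i.e.\ \((r,s)\mapsto(-r,-s)\). It leaves \(\Ad_{g_{r,s}^{-1}}\), hence \(L,\mathsf V,G,C,\mathsf E\), unchanged, while \(h\) and all its jets change sign. So \(\mathsf A,b_\bullet\) are even and \(d\) is odd. That still allows \(rs\) terms, and allows \(d=rP(m)+s\widetilde P(m)\).

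Under \(s\mapsto-s\) alone \(h\) is not odd: \(H_{d_2d_2}=5\sqrt2\,r\re A\) is unchanged while the \(b\)-blocks flip. This is really equivariance under conjugation by \(\diag(1,-1)\), which also sends \(U\mapsto-U\) and flips the off-diagonal coordinates. To conclude \(d(r,-s)=-d(r,s)\) you would have to check that the Appendix~\ref{app:explicit-dual} formulas and \(\mathcal R\) transform with exactly that sign.

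\textbf{What parity cannot give.} Even granting both parities, you only get polynomials in \(m\) (times \(s\) for \(d\)); the degree bounds do not follow.
\begin{itemize}
\item \(C\Pi Q^{-1}C^T\) contains \(L^TQ\Pi L\), which is quartic in \((r,s)\) because \(\Pi\) does not commute with \(\Ad_{g_{r,s}^{-1}}\).
\item In Appendix~\ref{app:explicit-dual}, \(\kappa\) has degree one, and the factors \(\gamma=\Ad_{g_{r,s}}(Q_G^{-1}\kappa_D)\), \(r^2q_\perp\), \(rs\,\gamma_\omega\) and \(rs\,q\) give \(Z^0,\lambda^0\) terms up to degree seven.
\item Hence \(\norm{Z^0}_Q^2\), \((\lambda^0)^Tf\) and \(CZ^0\) are a priori well above the degrees four and three that \eqref{eq:mass-model} allows.
\end{itemize}
That \(\mathsf A\) is affine in \(m\) with an angle-independent slope, that \(d\) is \(s\) times an affine function of \(m\), and that \(b_\bullet\) is quadratic are cancellations specific to these dual formulas.

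\textbf{How the paper closes it.} The paper proves these facts the same way as the dual identities. It reads off candidate coefficients at \(m=0,1,9/25\) (Appendix~\ref{app:mass-encoding}). It then subtracts \eqref{eq:mass-model} from the source rebuilt at symbolic \(w\), finds every residual identically zero, and checks that \(\mathsf A_1\) is angle-independent. Once that identity is verified, \(d=0\) at \(s=0\) is immediate; the paper also checks the \(s=0\) source directly.
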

\begin{proof}
We reduce the assertion to polynomial identities, whose exact verification
is given in Section~\ref{sec:verification}.
Use \(r=(1-w^2)/(1+w^2)\), \(s=2w/(1+w^2)\) and work in
\(\mathbb Q(u,v,w)\). Form the metric jets through
\eqref{eq:A-push}, and differentiate \eqref{eq:principal-symbol}
in the complete source variables before substituting
\eqref{eq:A-source-pair}. Its value on the family is zero. The bracket
and O'Neill terms in the first curvature variation also vanish at a
horizontal commuting pair. The source solution in
Appendix~\ref{app:explicit-dual} is then substituted into all fifty-four
equations of \eqref{eq:A-dual-checks}, as are all five homogeneous
columns. The residual numerators are zero polynomials.

The coefficients in \eqref{eq:mass-model} can be constructed using
\(m=0,1,9/25\); explicit interpolation formulas appear in
Appendix~\ref{app:mass-encoding}. This only defines candidates. The
calculation then reconstructs the source at an independent symbolic \(w\)
and subtracts \eqref{eq:mass-model}. Every matrix, vector and scalar
residual is identically zero, and the slope matrix is angle-independent.
The \(s=0\) source is separately checked before division by any row
factor.
\end{proof}

For the actual path \(tH+t^2J\), let \(a_{\Afamily}\) be the infimum of the
second Taylor coefficient over all actual flag velocities, and define
the relaxed coefficient
\[
 \underline a_h:=\inf_{\delta:\,\mathsf E\delta=-f}
                         \frac{\Phi_h(\delta)}{15}.
\]
Under \(H=\sqrt2h\), the substitution \(\delta=\sqrt2\,\widetilde\delta\)
rescales the linear horizontal forcing by \(\sqrt2\) and the quadratic
coefficient by 2. Proposition~\ref{prop:source-quadratic} therefore gives
\[
 a_{\Afamily}\ge 2\underline a_h+L_{J_r}+\sqrt2L_{J_i}.
\]
Only this lower bound on actual velocities is needed. Lemma~\ref{lem:dual} and
\eqref{eq:A-model}--\eqref{eq:mass-model} imply, for every
\(x\in\R^{20}\),
\begin{equation}\label{eq:A-witness-inequality}
 \tfrac{15}{2}a_{\Afamily}\ge b_R(m)+\sqrt2b_I(m)
       +2s(d_0+md_1)^Tx-x^T(\mathsf A_0+m\mathsf A_1)x.
\end{equation}
\subsection{A polynomial choice of dual variables}

Set \(m_*=1/3\), and partition
\[
 \mathsf A_*:=\mathsf A_0+\tfrac13\mathsf A_1
  =\begin{pmatrix}B_*&L_*\\L_*^T&D_*\end{pmatrix},\qquad
 d_*:=d_0+\tfrac13d_1=\binom{d_B}{d_R}.
\]
The block \(B_*\) is a constant invertible rational \(16\times16\)
matrix; the exact checks are included in Section~\ref{sec:verification}.
Since \(\mathsf A_*\)
is the matrix of a sum of squares in \eqref{eq:dual-lower-bound},
\(B_*\) is positive definite. Define
\begin{equation}\label{eq:small-witness}
 \begin{split}
 S&=D_*-L_*^TB_*^{-1}L_*,\qquad
 b_*=d_R-L_*^TB_*^{-1}d_B,\\
 v_*&=\tfrac1{16}(34I-21S+4S^2)b_*,\qquad
 x_0=\binom{B_*^{-1}(d_B-L_*v_*)}{v_*},\qquad x=sx_0.
 \end{split}
\end{equation}
For fixed \(v_*\), the first sixteen coordinates maximize
\(2d_*^Tx_0-x_0^T\mathsf A_*x_0\). Both \(v_*\) and \(x_0\) are rational functions
of the two angles. The fixed matrix polynomial constructs a feasible
dual choice without introducing an angle-dependent matrix inverse.
Its adequacy is established by the five inequalities below; no inverse
approximation or optimality property is used. We use \(x=sx_0\) at the
actual parameter \(m\).
Since \(d=s(d_0+md_1)\) and \(s^2=1-m\), the resulting gain in
\eqref{eq:A-witness-inequality} is quadratic.

\subsection{The five angular inequalities}

To keep the sign calculation rational, insert
\eqref{eq:small-witness} and form the auxiliary polynomial
\begin{equation}\label{eq:mass-polynomial}
 \begin{split}
 P_\theta(m)={}&b_R(m)+\tfrac75b_I(m)-\tfrac{15}{16}\\
 &+(1-m)\{2(d_0+md_1)^Tx_0
                         -x_0^T(\mathsf A_0+m\mathsf A_1)x_0\}.
 \end{split}
\end{equation}
The use of \(7/5\) in place of \(\sqrt2\) introduces an error that
we estimate below after proving positivity of this polynomial.
It is quadratic in \(m\). Write
\[
 P_\theta(m)=B_0(1-m)^2+2B_1m(1-m)+B_2m^2.
\]
Its degree-four Bernstein coefficients are
\begin{equation}\label{eq:five-coefficients}
 B_0,\quad\frac{B_0+B_1}{2},\quad
 \frac{B_0+4B_1+B_2}{6},\quad\frac{B_1+B_2}{2},\quad B_2.
\end{equation}
The five weights \(\binom4j m^j(1-m)^{4-j}\)
are nonnegative and sum to one. Strict positivity of these five angle
functions therefore implies \(P_\theta(m)>0\) for every
\(0\le m\le1\). This change of basis leaves the polynomial unchanged
and can give positive coefficients even when its original middle
coefficient \(B_1\) is negative.

\begin{proposition}\label{prop:A-sign}
Every function in \eqref{eq:five-coefficients} is strictly positive on
the complete angular family, including both projective endpoints.
Consequently
\[
 a_{\Afamily}>\frac{37}{560}>\frac1{16},\qquad
 \calQ_{\Afamily}(H,J)=2a_{\Afamily}>\frac18.
\]
\end{proposition}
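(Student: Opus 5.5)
The argument has two parts. The first is an exact sign verification of the five Bernstein coefficients~\eqref{eq:five-coefficients} as rational functions of the angles \(\theta=(u,v)\). The second is a bookkeeping step that converts positivity of \(P_\theta\) into the stated bound on \(a_{\Afamily}\).

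For the sign verification, Proposition~\ref{prop:A-mass} shows that \(b_R,b_I,d_0,d_1,\mathsf A_0\) have entries rational in \((u,v)\), with denominators that are powers of \(1+2u^2\) and \(1+2v^2\); it also shows that \(\mathsf A_1\) and \(B_*\) are constant. Consequently \(S\), \(b_*\), \(v_*\) and \(x_0\) in~\eqref{eq:small-witness} are rational as well, and no angle-dependent inverse appears. I will compute \(B_0,B_1,B_2\) explicitly and clear the positive denominators. Each coefficient then becomes a polynomial in \((u,v)\) divided by a positive factor.

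To treat the compactified lines, I will reparametrize each angle by a variable on \([0,1]\), for instance \(u=\tan\) of a half-angle or a Möbius map sending \([0,\infty]\) and \([-\infty,0]\) to \([0,1]\). After splitting into charts, positivity reduces to positivity of bivariate polynomials on the unit square. I will expand these in the tensor Bernstein basis with exact rational arithmetic. If any Bernstein coefficient is nonpositive, I will subdivide the square by de Casteljau steps until every coefficient is positive, which is possible when the polynomial is strictly positive on the closed square. The endpoint and pole cases are covered automatically because the charts include the boundary. I expect this step to be the main obstacle: the degrees are large, and the margin in the middle coefficient \((B_0+4B_1+B_2)/6\) may be thin. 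I would first test the coefficients at a grid of rational points to locate their minima, and then choose the subdivision adaptively near those minima.

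For the conclusion, fix \(m\in[0,1]\) and take \(x=sx_0\) in~\eqref{eq:A-witness-inequality}. Since \(s^2=1-m\), the right-hand side equals \(b_R+\sqrt2 b_I+(1-m)\{2(d_0+md_1)^Tx_0-x_0^T(\mathsf A_0+m\mathsf A_1)x_0\}\). This is \(P_\theta(m)+\tfrac{15}{16}+(\sqrt2-\tfrac75)b_I(m)\). Positivity of the Bernstein coefficients gives \(P_\theta(m)>0\). I will bound the error term using \(|b_I|\le\beta\), where \(\beta\) comes from a crude exact bound on the rational function \(b_I\), checked by the same Bernstein method applied to \(\beta\pm b_I\). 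Because \(\sqrt2-\tfrac75<0.0143\), this gives \(\tfrac{15}{2}a_{\Afamily}>\tfrac{15}{16}-0.0143\beta\). I will choose the constant \(-\tfrac{15}{16}\) slack, or the bound \(\beta\), so that the result yields \(a_{\Afamily}>\tfrac{37}{560}\); for example \(\beta\le 2\) suffices. The inequalities \(\tfrac{37}{560}>\tfrac1{16}\) and \(\calQ_{\Afamily}=2a_{\Afamily}\) then follow from the Taylor-coefficient convention of Section~\ref{sec:source-variation}.
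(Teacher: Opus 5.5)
Your plan is correct and has the same skeleton as the paper's proof. You certify the five coefficient functions exactly in the Bernstein basis, then substitute \(x=sx_0\) with \(s^2=1-m\), so that the right side of \eqref{eq:A-witness-inequality} becomes \(P_\theta(m)+\tfrac{15}{16}+(\sqrt2-\tfrac75)b_I(m)\). It differs from the paper in two technical choices, and one of your illustrative numbers is wrong.

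\emph{Projective endpoints.} The paper uses no charts. It checks exactly that the five functions are invariant under \(u\mapsto-1/(2u)\) and \(v\mapsto-1/(2v)\). These maps send \(|u|>3/4\) and \(\infty\) into \(|u|<2/3\), so a single square \([-3/4,3/4]^2\) with a fixed fifteen-rectangle subdivision suffices. Your M\"obius charts avoid the symmetry identity at the cost of four squares. Use the rational M\"obius option rather than \(\tan\) to stay polynomial.

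\emph{Irrational correction.} The paper does not certify a bound on \(b_I\) numerically. It computes \(L_{J_i}\) in closed form from \eqref{eq:J-intrinsic}, obtaining \(I=\tfrac{11}{160}(1-2m)\{3T(u)+7T(v)\}\), and two exact square identities give \(|T|\le3\). Your Bernstein certificate for \(\beta\pm b_I\) would also work, provided it ranges over \(m\in[0,1]\) as well as the angles.

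\emph{The example \(\beta\le2\).} This is unattainable. With \(\sqrt2\,u=\tan(\psi/2)\) one has \(T=\cos2\psi+2\sqrt2\sin2\psi\), which attains \(\pm3\). Hence \(\sup|b_I|=\tfrac{15}{2}\cdot\tfrac{33}{16}=\tfrac{495}{32}\approx15.5\), attained at \(m\in\{0,1\}\), and a check of \(2\pm b_I\) would fail.

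Your conclusion still survives. With your sharper estimate \(\sqrt2-\tfrac75<0.0143\), any \(\beta\le30\) yields \(a_{\Afamily}>\tfrac{37}{560}\). Only \(\beta\) is adjustable here; the \(\tfrac{15}{16}\) is built into \(P_\theta\), so it is not a free slack. With the paper's cruder bound \(\sqrt2-\tfrac75<\tfrac1{35}\), the sharp value \(\beta=\tfrac{495}{32}\) gives exactly \(\tfrac{37}{560}\), which is where that constant comes from.
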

\begin{proof}
The five functions have positive denominators of the form
\(c(u^2+1/2)^a(v^2+1/2)^b\), with \(c>0\). Exact coefficient
comparison verifies invariance under either antipodal map
\[
 u\longmapsto-\frac1{2u},\qquad v\longmapsto-\frac1{2v}.
\]
If \(|u|>3/4\), its image has absolute value less than \(2/3\),
and infinity maps to zero; the same argument applies to \(v\).
Invariance under these two involutions therefore reduces positivity on
the complete projective angular domain to positivity on the closed square
\(\mathcal B=[-3/4,3/4]^2\).
The numerators are reconstructed from \eqref{eq:A-model} and
\eqref{eq:small-witness}; their Bernstein coefficients are strictly
positive on the complete fifteen-rectangle cover specified in
Section~\ref{sec:verification}. This proves the first assertion.

For the irrational part, set
\[
 T(u)=\frac{1+16u-12u^2-32u^3+4u^4}{(1+2u^2)^2}.
\]
For \(J_i\), formula~\eqref{eq:J-intrinsic} gives
\begin{equation}\label{eq:I-formula}
 I:=L_{J_i}=\frac{11}{160}(1-2m)\{3T(u)+7T(v)\}.
\end{equation}
If \(c=1-12u^2+4u^4\), \(s=4u-8u^3\), and
\(d=(1+2u^2)^2\), then
\(c^2+2s^2=d^2\) and
\(9d^2-(c+4s)^2=2(2c-s)^2\). Hence \(|T|\le3\) and
\(|I|\le33/16\), including infinity by continuity. Combining
\eqref{eq:A-witness-inequality}, \(P_\theta>0\), and
\(7/5<\sqrt2<10/7\) yields
\[
 a_{\Afamily}>\frac18-\Bigl(\frac{10}{7}-\frac75\Bigr)\frac{33}{16}
       =\frac{37}{560}>\frac1{16}.
\]
Since \(\calQ_{\Afamily}=2a_{\Afamily}\), this proves \eqref{eq:A-main-bound}.
\end{proof}

\clearpage
\section{Exact computations}\label{sec:verification}

The computation supplies three specific inputs to the proof. The unperturbed
metric has two families of zero-curvature planes, \(\Afamily\) and \(\Bfamily\), and
Section~\ref{sec:completion} reduces the remaining argument to the uniform
bounds \(\calQ_{\Afamily}(H,J)>1/8\) and \(\calQ_{\Bfamily}(H_N)>1/48\).
These reduced variations allow both the base point and the plane to move
before minimizing the second curvature variation. On \(\Afamily\), the ordinary second
Taylor coefficient is \(a_{\Afamily}=\calQ_{\Afamily}(H,J)/2\).
The correspondence between the calculations and these bounds is as follows.
\begin{center}
\begin{tabular}{@{}>{\raggedright\arraybackslash}p{0.23\linewidth}>{\raggedright\arraybackslash}p{0.41\linewidth}>{\raggedright\arraybackslash}p{0.29\linewidth}@{}}
\toprule
\textbf{Calculation}&\textbf{Required output}&\textbf{Role in the proof}\\
\midrule
Identities on \(\Afamily\):
Lemma~\ref{lem:A-first-null},
\eqref{eq:A-dual-checks}, \eqref{eq:mass-model}
&The first curvature variation vanishes; the dual, projection and
row-parameter identities hold for all parameters.
&Validates the \(\Afamily\) model in Proposition~\ref{prop:A-mass}, used in the lower bound
\eqref{eq:A-witness-inequality}.\\
\addlinespace
Positivity on \(\Afamily\):
\eqref{eq:five-coefficients}, \eqref{eq:I-formula}
&Five angle functions are positive everywhere, and the irrational correction
is bounded.
&Proposition~\ref{prop:A-sign}: \(a_{\Afamily}>37/560\), hence \(\calQ_{\Afamily}(H,J)>1/8\).\\
\addlinespace
Constants on \(\Bfamily\):
the norms following \eqref{eq:B-Cauchy}
&The two explicit norm sums and the final rational inequality are correct.
&The arithmetic in Proposition~\ref{prop:B-bound} gives
\(\calQ_{\Bfamily}(H_N)>1/48\).\\
\bottomrule
\end{tabular}
\end{center}
All three checks use exact rational or integer arithmetic. Exact polynomial sign
verification using Sturm's theorem also appears in \cite[Section 5]{GVZ11}.

The code is included in the ancillary files. The companion repository is
{\urlstyle{tt}\def\UrlBreaks{\do\/\do\-}\url{https://github.com/shengtaoguo/gromoll-meyer-curvature-calculations}}. 
The accompanying documentation relates the code to the numbered formulas
and gives reproduction instructions.

\subsection{Bernstein bounds}

There are two uses of the Bernstein basis. The first removes the row
parameter \(m=|a|^2\in[0,1]\) from the positivity test. Here \((a,b)\)
is the first row of the \(\Sp(2)\) representative, with
\(|a|^2+|b|^2=1\). For the angle pair
\(\theta=(u,v)\), write the polynomial \(P_\theta\) of
\eqref{eq:mass-polynomial} in the form
\[
 P_\theta(m)=B_0(1-m)^2+2B_1m(1-m)+B_2m^2.
\]
The five angle functions in \eqref{eq:five-coefficients}, in their stated
order, are
\[
 (\varphi_0,\ldots,\varphi_4)
 =\left(B_0,\frac{B_0+B_1}{2},\frac{B_0+4B_1+B_2}{6},
              \frac{B_1+B_2}{2},B_2\right).
\]
They satisfy
\[
 P_\theta(m)=\sum_{j=0}^4\binom4j m^j(1-m)^{4-j}\varphi_j(u,v).
\]
For fixed \(u,v\), this is a weighted average of the five values
\(\varphi_j(u,v)\): the weights are nonnegative and sum to one.
Thus their strict positivity proves \(P_\theta(m)>0\) for every
\(m\in[0,1]\), including both endpoints.

The second use proves that each \(\varphi_j\) is positive for every angle
pair. The exact identities under
\(u\mapsto-1/(2u)\) and \(v\mapsto-1/(2v)\) reduce this task to
\(\mathcal B=[-3/4,3/4]^2\), as in Proposition~\ref{prop:A-sign}.
Indeed, if \(|u|>3/4\), its image has absolute value less than \(2/3\),
and infinity maps to zero; the same holds for \(v\).
Clearing the positive denominators and cancelling positive circle factors
gives integer numerators with the same signs as the \(\varphi_j\).
We test these polynomials on rectangles by the following expansion.

For a polynomial \(p\) on a rectangle, an affine change of variables maps
the rectangle to \([0,1]^2\). Write its tensor Bernstein expansion as
\[
 p(x,y)=\sum_{i=0}^d\sum_{j=0}^e
  b_{ij}\binom di x^i(1-x)^{d-i}
             \binom ej y^j(1-y)^{e-j}.
\]
Again the basis functions are nonnegative and their sum is one, so
\(p(x,y)\) is a weighted average of the coefficients \(b_{ij}\).
Positive coefficients therefore imply strict positivity on the closed rectangle. The
power-to-Bernstein conversion in one variable is the elementary identity
\begin{equation}\label{eq:Bernstein-conversion}
 b_j=\sum_{i=0}^j\frac{\binom ji}{\binom di}a_i,
 \qquad p(x)=\sum_{i=0}^d a_i x^i.
\end{equation}
The verifier applies the affine substitution and this formula directly
in each variable, with common positive denominators cleared to integers.

The following preorder list defines a binary subdivision of \(\mathcal B\);
read it from left to right along each row.
At a node \((\ell;\mathcal I)\), first check the Bernstein coefficients
of the functions \(\varphi_j\), \(j\in\mathcal I\), on the current
rectangle. If \(\ell=0\) or \(1\), bisect the \(u\)- or \(v\)-interval,
respectively, and visit the lower half before the upper half;
\(\ell=-1\) marks a terminal rectangle. For example, the first entry
\((1;\varnothing)\) splits the root square at \(v=0\) without testing a
function there. The indices 0 through 4 refer directly to
\(\varphi_0,\ldots,\varphi_4\) above.
\begin{center}
\small
\begin{tabular}{llll}
\toprule
\((1;\varnothing)\)&\((0;\varnothing)\)&\((0;0123)\)&\((1;\varnothing)\)\\
\((-1;4)\)&\((-1;4)\)&\((1;\varnothing)\)&\((-1;4)\)\\
\((-1;4)\)&\((0;0123)\)&\((1;\varnothing)\)&\((-1;4)\)\\
\((-1;4)\)&\((1;\varnothing)\)&\((-1;4)\)&\((-1;4)\)\\
\((0;\varnothing)\)&\((1;23)\)&\((-1;014)\)&\((-1;014)\)\\
\((1;3)\)&\((0;04)\)&\((1;\varnothing)\)&\((-1;12)\)\\
\((-1;12)\)&\((-1;12)\)&\((0;012)\)&\((-1;4)\)\\
\((-1;4)\)&&&\\
\bottomrule
\end{tabular}
\end{center}
Here \(0123\), for example, denotes \(\{0,1,2,3\}\).
On each terminal rectangle, every function has positive Bernstein
coefficients either there or at an ancestor. The twenty-nine nodes give
fifteen terminal rectangles and thirty-eight polynomial bounds.
A bound proved on a parent rectangle holds on all its descendants, so
the five functions on the fifteen terminal rectangles require only these
thirty-eight comparisons. Together, the rectangles cover the entire square;
each comparison proves positivity on a whole rectangle.

\clearpage
\subsection{Verification}

\noindent\textbf{Identities on \(\Afamily\).}
Starting from the tensor entries \eqref{eq:H}--\eqref{eq:J}, the product
metric and frame conversion \eqref{eq:product-metric}--\eqref{eq:frame-conversion},
and the projection, horizontal pair and fibre maps
\eqref{eq:A-push}--\eqref{eq:A-LR}, the program reconstructs the first
curvature variation and the constrained quadratic model for \(h=H/\sqrt2\).
The variables \(u,v\) are the two phase parameters, while
\[
 r=\frac{1-w^2}{1+w^2},\qquad s=\frac{2w}{1+w^2},\qquad m=r^2
\]
parametrize the row amplitudes. The vanishing first variation, all
fifty-four source dual equations, the five homogeneous columns and
projection identities in \eqref{eq:A-dual-checks}, and the decomposition \eqref{eq:mass-model}
are checked over \(\mathbb Q(u,v,w)\). In each case, subtracting the
two sides and clearing denominators gives the zero polynomial.
This checks the whole parameter family. The row poles \(a=0\) and \(b=0\)
are also evaluated directly. Finally, the constant \(16\times16\) block \(B_*\)
used in \eqref{eq:small-witness} is inverted over the rationals, and both
inverse products are verified.

\smallskip
\noindent\textbf{Positivity on \(\Afamily\).}
The witness \eqref{eq:small-witness} is substituted into
\eqref{eq:mass-polynomial} to reconstruct the five functions
\(\varphi_0,\ldots,\varphi_4\). Their primitive integer numerators have
bidegrees \((32,36)\) for indices 0--3 and \((8,8)\) for index 4.
The two antipodal identities are checked as polynomial identities.
There are twenty-five comparisons for indices 0--3 and thirteen for index 4,
so the number of tested coefficients is
\(25(33\cdot37)+13(9\cdot9)=31{,}578\). All are strictly positive. The weighted-average
identity then gives \(P_\theta(m)>0\) for all angles and all
\(m\in[0,1]\).

The program also checks \eqref{eq:I-formula} for \(I=L_{J_i}\) and
the polynomial square identities giving \(|I|\le33/16\).
The normalization \(15/2\) in \eqref{eq:A-witness-inequality} turns the
threshold \(15/16\) in \eqref{eq:mass-polynomial} into a Taylor margin
\((2/15)(15/16)=1/8\). Restoring the coefficient \(\sqrt2\) adds
\((\sqrt2-7/5)I\) to this lower bound. Using \(7/5<\sqrt2<10/7\) gives
\[
 \begin{aligned}
 a_{\Afamily}&>\frac18+\left(\sqrt2-\frac75\right)I
 \ge\frac18-\left(\frac{10}{7}-\frac75\right)\frac{33}{16}
       =\frac{37}{560},\\
 \calQ_{\Afamily}(H,J)&=2a_{\Afamily}>\frac18.
 \end{aligned}
\] 

\smallskip
\noindent\textbf{Constants on \(\Bfamily\).}
Here the program evaluates the two norm sums following
\eqref{eq:B-Cauchy}, obtaining \(173/96+27/16000\) and \(281/96\).
Their weights in \eqref{eq:B-Cauchy} are 2 and \(2/3\), respectively,
so \(C_\eta=50/9+27/8000\). The \(\Bfamily\) source pair has Gram determinant 54,
and the fixed-pair second curvature variation has numerator \(27/4\),
as computed in Section~\ref{sec:B-bound}. Its completed-square estimate
bounds the loss in this numerator from moving the flag by \(C_\eta\).
The final rational calculation is
\[
 \frac18-\frac1{54}\left(\frac{50}{9}+\frac{27}{8000}\right)
 =\frac{85757}{3888000}>\frac1{48}.
\]
Section~\ref{sec:B-bound} proves the source identity
\eqref{eq:B-dual-vector} for every admissible velocity
\(\delta\in\ker\mathsf E\), and hence shows that this number is a
lower bound for \(\calQ_{\Bfamily}(H_N)\).

Section~\ref{sec:completion} combines these two bounds with its uniform
neighborhood estimates. Setting \(t=\varepsilon^3\) then gives
\(\min\sec_{g_\varepsilon}\ge c\varepsilon^6\) for all sufficiently
small \(\varepsilon>0\).

\clearpage
\appendix

\section{The normal quadratic gap}\label{app:clean}

We prove the bracket estimate and the rank calculation used in
Proposition~\ref{prop:clean-zero}.

\subsection{A quantitative comparison for two contractions}

Temporarily allow both contraction factors \(\lambda,\mu\) to lie
in \((0,1)\). Let \(g_2\) be the metric on \(\Sp(2)\) after these
two contractions and before the GM quotient, and define its curvature
numerator by
\[
 \kappa_2(X,Y)=g_2\bigl(R^{g_2}(X,Y)Y,X\bigr).
\]
All bracket norms in this appendix use the original bi-invariant inner
product \(Q\). Put
\(U=X_{\mathfrak p}+X_{\mathfrak q}+\mu X_{\mathfrak h}\),
\(V=Y_{\mathfrak p}+Y_{\mathfrak q}+\mu Y_{\mathfrak h}\), and
\[
 Z=X_{\mathfrak p}+\lambda X_{\mathfrak q}
                    +\lambda\mu X_{\mathfrak h},\qquad
 W=Y_{\mathfrak p}+\lambda Y_{\mathfrak q}
                    +\lambda\mu Y_{\mathfrak h}.
\]
The product curvatures in the two submersions give
\begin{equation}\label{eq:clean-product-bound}
 \kappa_2(X,Y)\ge\tfrac14\norm{E_0}^2
   +\tfrac{\lambda(1-\lambda)^3}{4}\norm{K_0}^2
   +\tfrac{\lambda\mu(1-\mu)^3}{4}\norm{H_0}^2,
\end{equation}
where
\(E_0=[Z,W]\), \(K_0=[U_{\mathfrak k},V_{\mathfrak k}]\), and
\(H_0=[X_{\mathfrak h},Y_{\mathfrak h}]\). Only nonnegative
O'Neill terms have been discarded. For example, the first contraction
has auxiliary factor metric scaled by
\(s=\lambda/(1-\lambda)\); its horizontal lift contributes
\(\lambda^4/(4s^3)=\lambda(1-\lambda)^3/4\) times the bracket
square. The second contraction gives the last coefficient in exactly
the same way, with \(t=\mu/(1-\mu)\) and the already contracted
\(H_\Delta\) metric.

Write
\[
 P_1=[X_{\mathfrak p},Y_{\mathfrak p}],\quad
 C_1=[X_{\mathfrak p},V_{\mathfrak k}]
                    +[U_{\mathfrak k},Y_{\mathfrak p}].
\]
The symmetric-pair relations give
\[
 E_0=P_1+\lambda C_1+\lambda^2K_0,\quad
 C_1=\lambda^{-1}(E_0)_{\mathfrak p},\quad
 P_1=(E_0)_{\mathfrak k}-\lambda^2K_0,
\]
and \([U,V]=P_1+C_1+K_0\). Similarly, if
\(Q_1=[X_{\mathfrak q},Y_{\mathfrak q}]\), then
\(Q_1=(K_0)_{\mathfrak h}-\mu^2H_0\).
Consequently the sum \(\mathscr D\) of the five squared norms in
\eqref{eq:five-defects} obeys
\[
 \mathscr D\le
 \max\{8+3\lambda^{-2},\,6+8\lambda^4,\,1+2\mu^4\}
      (\norm{E_0}^2+\norm{K_0}^2+\norm{H_0}^2).
\]
In our case \(\lambda=\mu=1/2\), this and
\eqref{eq:clean-product-bound} give the explicit bound
\begin{equation}\label{eq:five-square-bound}
 \kappa_2(X,Y)\ge\frac1{2560}\mathscr D.
\end{equation}
The GM quotient adds another nonnegative term. For a \(g_2\)-orthonormal
horizontal frame \(X,Y\), the Gram determinant is one, so
\eqref{eq:five-square-bound} gives the sectional-curvature bound.

\subsection{The linearized defects on \texorpdfstring{\(\Bfamily\)}{B}}

At \eqref{eq:B-zero-form} use arbitrary variations
\[
 \dot U=P(\xi)+Q_-(p)+\mathsf H(q),\qquad
 \dot V=P(\chi)+Q_-(r)+\mathsf H(s).
\]
The last three defects in \eqref{eq:five-defects} impose
\(\chi=\gamma x,p=\alpha y,s=\beta y\), with ranks three, two and
two. The \(\mathfrak k\)-defect then imposes
\(q-r\in\R y\), of rank two. The remaining off-diagonal component
of the full bracket is, up to a fixed common sign,
\[
 y\xi+\xi y+rx+xr+(\beta-\gamma)[x,y].
\]
The first two terms range over \(\spanof_\R\{1,y\}\); since
\(r,x\) are imaginary, \(rx+xr\) is real. The last term supplies
the independent \([x,y]\)-direction. This component has rank three,
giving total rank \(3+2+2+2+3=12\).

For \(\Afamily\), the corresponding last rank-three map becomes
\([x^{-1}\xi,y]+r-\Ad_{x^{-1}}s\) after multiplication by
\(x^{-1}\); its image is all of \(\im\HH\). Also
\(\ip yz=-1/2\) for the unit normalization, so the additional
rank-one condition never degenerates. The kernels of these linear maps
are precisely the tangent variations of the respective bracket-zero
forms. Combining with the six transverse horizontal equations proves
the kernel assertion used in Proposition~\ref{prop:clean-zero}.
The normal derivative has a uniform positive least singular value on
each compact incidence. Taylor expansion of the defects, followed by
\eqref{eq:five-square-bound}, gives the quadratic distance estimate.

\section{Explicit dual vectors and coordinates}\label{app:explicit-dual}

Use the source data of Section~\ref{sec:A-source}, and put
\(n=y\times z\), \(T_\circ=\diag(A_\circ,B_\circ)\),
\(R_A^\circ=\Ad_{A_\circ}\), \(R_B^\circ=\Ad_{B_\circ}\), and
\(R_A=R_A^\circ S_A\). Source components are indexed by
\(G,K_1,K_2,H,L\). Within the G component the rational frame is
\((d_1,d_2,\rho,\omega)\). The fibre multiplier order is
\((q_1,q_2,h_0,\ell,g_0)\).

\subsection{One inhomogeneous source vector}

Split \(\kappa=(\kappa_D,\kappa_U,\kappa_V)\) according to the
fifty-four source variables, and define
\[
 k_U=Q^{-1}\kappa_U,\quad k_V=Q^{-1}\kappa_V,\quad
 \gamma=\Ad_{g_{r,s}}(Q_G^{-1}\kappa_D),
 \quad\widetilde u=\Ad_{T_\circ}(k_{U,G}),\quad
 \widetilde v=\Ad_{T_\circ}(k_{V,G}).
\]
Set
\begin{align*}
 q_\perp&=-\tfrac12a_*\times\gamma_{d_1},\qquad
 A_1=\widetilde v_{d_1}-R_Ak_{V,H}-R_A^\circ k_{V,K_1},\\
 w_0&=\tfrac12\{A_1+(R_A-r^2I)q_\perp\},\qquad
 z_A=a_*\times R_Aa_*,\\
 \tau&=\tfrac23\ip{z_A}{\widetilde u_\omega-rs\gamma_\omega-a_*\times w_0},
 \qquad q=q_\perp+\tau a_*.
\end{align*}
Since \(R_Ay=a_*\), orthogonality gives
\(\ip{a_*}{R_Aa_*}=\ip y{a_*}=1\). Hence \(|z_A|^2=4-1=3\).
Define the V half of the multiplier by
\[
 \lambda^0_V=\bigl(S_A(q-k_{V,H})-k_{V,K_1},\,
 S_B(q-k_{V,H})-k_{V,K_2},\,q-k_{V,H},\,0,\,q\bigr).
\]
Next put
\[
 d=\widetilde v_\omega-rsq,\qquad z_1=-d/2,\quad z_2=d/2,
 \qquad w_1=\tfrac12\{A_1+(R_A-r^2I)q\},
\]
and
\(Z_G^0=\Ad_{T_\circ^{-1}}(z_1,z_2,0,w_1)\).
For the U half set
\begin{align*}
 q_1^U&=(R_A^\circ)^{-1}
          (a_*\times z_1+r^2\gamma_\omega-\widetilde u_{d_1}),\\
 q_2^U&=(R_B^\circ)^{-1}
          (a_*\times z_2+s^2\gamma_\omega-\widetilde u_{d_2}),\\
 b_1&=y\cdot(k_{U,K_1}+q_1^U),\qquad
 b_2=z\cdot(k_{U,K_2}+q_2^U),\\
 h_U&=\tfrac13\{(2b_1+b_2)y+(b_1+2b_2)z\},\qquad
 \lambda_U^0=(q_1^U,q_2^U,h_U,0,\gamma_\omega).
\end{align*}
The other source components are
\begin{align*}
 Z_{K_1}^0&=\tfrac12y\times(k_{U,K_1}+q_1^U-S_Ah_U),\\
 Z_{K_2}^0&=\tfrac12z\times(k_{U,K_2}+q_2^U-S_Bh_U),\\
 Z_H^0&=\tfrac12a_*\times(k_{U,H}+h_U-\gamma_\omega),
 \qquad Z_L^0=0.
\end{align*}
Substitution verifies \eqref{eq:A-dual-checks}. All divisors in these
formulas are constant.

\subsection{Five homogeneous columns}

For \(e,t\in a_*^\perp\) and \(\nu\in\R\), put
\[
 z_0=-\tfrac12a_*\times(e-t),\quad
 q_1=(R_A^\circ)^{-1}e,\quad q_2=(R_B^\circ)^{-1}e,
 \quad h_0=\nu n.
\]
Define a homogeneous multiplier and source vector by
\begin{align*}
 \lambda_U&=(q_1,q_2,h_0,t,t),\qquad\lambda_V=0,\\
 Z_G&=\Ad_{T_\circ^{-1}}D(z_0,z_0),\\
 Z_{K_1}&=\tfrac12y\times(q_1-S_Ah_0),\qquad
 Z_{K_2}=\tfrac12z\times(q_2-S_Bh_0),\\
 Z_H&=\tfrac12a_*\times(h_0-t),\qquad
 Z_L=-\tfrac1{20}a_*\times t.
\end{align*}
The matrices \(K_Z,K_\lambda\) are obtained from the five inputs
\[
 (e,t,\nu)=(n,0,0),\ (y-z,0,0),\ (0,n,0),\ (0,y-z,0),\ (0,0,1).
\]
They are independent of the row amplitudes and satisfy
\(\mathsf B^TQZ+\mathsf E^T\lambda=0\).

\subsection{The constant twenty-dimensional coordinate map}

Let \(d_*=y-z\), and write a vector
in \(\R^{20}\) in the order
\[
 x=(\alpha\in\R^4,\ \beta\in\R^4,e_\beta,t_\beta,
       \gamma\in\R^4,e_\gamma,t_\gamma,\ \nu,\eta_h\in\R^3).
\]
Define the four vector components of the fibre coefficient \(\eta\)
by
\begin{equation}\label{eq:coordinate-table}
 \bigl(R_A^\circ\eta_{q_1},R_B^\circ\eta_{q_2},\eta_\ell,\eta_g\bigr)_j
       =\alpha_j a_*+\beta_j n+\gamma_jd_*,\qquad 1\le j\le4.
\end{equation}
The homogeneous source parameters are
\(e=e_\gamma n-e_\beta d_*/2\),
\(t=t_\gamma n-t_\beta d_*/2\), together with \(\nu\).
Equations~\eqref{eq:coordinate-table} and these two assignments define
\(\mathcal R x\in\R^5\oplus\R^{15}\) in
\eqref{eq:A-model}. The last four coordinates are exactly
\((\nu,\eta_{h,i},\eta_{h,j},\eta_{h,k})\). The polynomial in
\eqref{eq:small-witness} is evaluated in these coordinates.

\section{Intrinsic variations and parameter identities}\label{app:mass-encoding}

Use the \(\Afamily\) parameters in \eqref{eq:A-parameters}, with \(h=H/\sqrt2\)
and the rational splitting \(J=J_r+\sqrt2J_i\).
The first variation of each of \(h,J_r,J_i\) can be computed by
restricting the pulled-back tensor to the complete source flat. For
its projection to \(\Sp(2)\), use
\(x=\bar BA\), \(X=P(x)\), \(D=D(y,z)\), \(H_0=\mathsf H(a_*/2)\), and
\(Y=D+H_0\). That projection is parametrized by
\[
 \psi(\xi,\eta)=g\exp(\xi X)\exp(\eta D)\exp(\eta H_0).
\]
Let \(X_{\xi,\eta},Y_{\xi,\eta}\) be the coordinate tangent fields
of this projection. For \(k\in\{h,J_r,J_i\}\), write the entries
of its pullback as \(k_{11},k_{12},k_{22}\). These are the entries
on the full source flat because the tensor is independent of the L factor.
The full source flat, including its L component, has Gram determinant 15.
Its intrinsic first variation is therefore
\begin{equation}\label{eq:J-intrinsic}
 L_k=\frac1{15}\left(
 \partial_\xi\partial_\eta k_{12}
       -\tfrac12\partial_\xi^2k_{22}
       -\tfrac12\partial_\eta^2k_{11}\right)_{(0,0)}.
\end{equation}
This applies separately to \(J_r,J_i\) after
\eqref{eq:frame-conversion}. In particular, ordinary second Taylor
coefficients of the matrix exponentials already contain their factor
\(1/2\); no further uniform factor of two is inserted in
\eqref{eq:J-intrinsic}.

\subsection{The mixed derivative for H}

Put \(a_*=y+z\), \(d=y-z\), and \(n=y\times z\), in the
normalization \eqref{eq:A-fixed-quaternions}. Write \(x=\rho+\omega\),
with \(\omega\in\im\HH\). The relation \(xy=zx\) gives
\[
 \omega=\alpha a_*+\rho n\quad\text{for some }\alpha\in\R.
\]
The equation \(Bz=a_*B\) gives
\(\ip{\im B}y=0\) and \(\ip{\im B}n=-\re B\).
Set \(y_1=a_*+d/2\), \(z_1=a_*-d/2\), and define the real-linear
functional on quaternions
\begin{align*}
 \ell(c)={}&(\re c)\ip{-4y_1+(107/20)z_1}{\omega}\\
 &+\ip{-(13/15)y_1-4z_1}{(\im c)\times\omega}
                  +4\rho\ip{z_1}{\im c}.
\end{align*}
These are precisely the mixed blocks of \(h\). In the flat coordinates
of Proposition~\ref{prop:A-preservation}, equivariance under
conjugation by \(q_\eta=e^{\eta a_*/2}\) yields
\[
 h_{12}(\xi,\eta)=s_\xi\ell(e^{3\eta a_*/2}B).
\]
Using \(\omega=\alpha a_*+\rho n\) and the two identities for
\(\im B\) in the displayed functional gives
\[
 \ell(a_*B)=\tfrac{35}{3}\rho\re B
                  -\tfrac{35}{6}\alpha\ip{\im B}{a_*}
          =\tfrac{35}{6}\re(Bx).
\]
Since \(s'_0=-r\) and \(Bx=A\), it follows that
\[
 \partial_\xi\partial_\eta h_{12}(0,0)
   =-\tfrac{3r}{2}\ell(a_*B)=-\tfrac{35}{4}\re a.
\]
Also \(h_{11}=0\) and \(h_{22}=(35/2)\re a(\xi,\eta)\),
because \(|z_1|^2=7/2\). The relation
\(\partial_\xi^2a=-a\) proves the other derivative used in
Lemma~\ref{lem:A-first-null}.

\subsection{Dependence on the row parameter}

Put \(\mu=9/25\), and use \(r,s\ge0\) at the three evaluation points
\(m=0,1,\mu\). Define
\begin{align*}
 \mathsf A_0&=\mathsf A(0),\qquad
 \mathsf A_1=\mathsf A(1)-\mathsf A(0),\\
 d_0&=d(0),\qquad
 d_1=\frac{\tfrac54d(\mu)-d_0}{\mu},\\
 b_{\bullet,0}&=b_\bullet(0),\qquad
 b_{\bullet,2}=\frac{b_\bullet(\mu)-(1-\mu)b_\bullet(0)-\mu b_\bullet(1)}
                         {\mu(\mu-1)},\\
 b_{\bullet,1}&=b_\bullet(1)-b_{\bullet,0}-b_{\bullet,2}.
\end{align*}
The values correspond to \(w=1,0,1/2\), respectively, and all
arithmetic remains rational. The factor \(5/4\) divides by
\(s=4/5\) at the middle value. These evaluation formulas define candidate
coefficients; the identity in Proposition~\ref{prop:A-mass} is checked
over the full rational parameter field with an independent symbolic
\(w\). Thus no interpolation assumption is made.

\clearpage
\bibliographystyle{alpha}
\bibliography{references}

\end{document}